\newcommand{\stkout}[1]{\ifmmode\text{\sout{\ensuremath{#1}}}\else\sout{#1}\fi}
\providecommand{\xto}[1]{\xrightarrow{#1}}
\numberwithin{equation}{section}
\theoremstyle{definition}
\newtheorem{thm}{Theorem}[section]
\newtheorem{cor}[thm]{Corollary}
\newtheorem{prop}[thm]{Proposition}
 \theoremstyle{definition}
\newtheorem{defn}[thm]{Definition}
\newtheorem{definition}[thm]{Definition}
\newtheorem{remark}[thm]{Remark}
\newtheorem{lemma}[thm]{Lemma}
\newtheorem{example}[thm]{Example}
\newtheorem{proposition}[thm]{Proposition}
\newtheorem{corollary}[thm]{Corollary}
\newtheorem{theorem}[thm]{Theorem}
\newcommand{\thistheoremname}{}
\newtheorem{genericthm}[thm]{\thistheoremname}
\newcommand{\Hom}{{\rm{Hom}}}
\newcommand{\floor}[1]{\lfloor{#1}\rfloor}
\newcommand{\id}{\operatorname{id}}
\newcommand{\Char}{\operatorname{char}}
\newcommand{\Spec}{\operatorname{Spec}}
\newcommand{\GW}{\operatorname{GW}}
\newcommand{\Gr}{\operatorname{Gr}}
\newcommand{\ind}{\operatorname{ind}}
\newcommand{\sgn}{\operatorname{sgn}}
\newcommand{\Jac}{\operatorname{Jac}}
\providecommand{\Bez}{\text{B\'{e}z}}
\providecommand{\KO}{\text{KO}}
\newcommand{\fg}{\text{f.g.}}
\newcommand{\HH}{\mathbb{H}}
\newcommand{\Mod}{\ \mathrm{mod}\ }
\newcommand{\pcoor}[1]{%
  \begingroup\lccode`~=`: \lowercase{\endgroup
  \edef~}{\mathbin{\mathchar\the\mathcode`:}\nobreak}%
  [
  \begingroup
  \mathcode`:=\string"8000
  #1%
  \endgroup 
  ]
}
\newcommand{\til}[1]{\widetilde{#1}}
\newcommand{\gw}[1]{\left\langle#1\right\rangle}
\newcommand{\sheafHom}{\mathcal{H}\hspace{-0.1em}\mathit{om}}
\providecommand{\A}{\mathbb{A}}
\providecommand{\F}{\mathbb{F}}
\renewcommand{\P}{\mathbb{P}}
\providecommand{\Alg}{\texttt{Alg}}
\begin{document}
\title{B\'ezoutians and the $\mathbb{A}^1$-Degree}

\author[Brazelton]{Thomas Brazelton}
\author[McKean]{Stephen McKean}
\author[Pauli]{Sabrina Pauli}

\begin{abstract}
    We prove that both the local and global $\mathbb{A}^1$-degree of an endomorphism of affine space can be computed in terms of the multivariate B\'ezoutian. In particular, we show that the B\'ezoutian bilinear form, the Scheja--Storch form, and the $\mathbb{A}^1$-degree for complete intersections are isomorphic. Our global theorem generalizes Cazanave's theorem in the univariate case, and our local theorem generalizes Kass--Wickelgren's theorem on EKL forms and the local degree. This result provides an algebraic formula for local and global degrees in motivic homotopy theory.
\end{abstract}
\maketitle

\section{Introduction}
Morel's $\A^1$-Brouwer degree~\cite{Morel} assigns a bilinear form-valued invariant to a given endomorphism of affine space. However, Morel's construction is not explicit. In order to make computations and applications, we would like algebraic formulas for the $\A^1$-degree. Such formulas were constructed by Cazanave for the global $\A^1$-degree in dimension 1~\cite{Cazanave}, Kass--Wickelgren for the local $\A^1$-degree at rational points and \'etale points~\cite{KW-EKL}, and Brazelton--Burklund--McKean--Montoro--Opie for the local $\A^1$-degree at separable points~\cite{trace-paper}. In this paper, we give a general algebraic formula for the $\A^1$-degree in both the global and local cases. In the global case, we remove Cazanave's dimension restriction, while in the local case, we remove previous restrictions on the residue field of the point at which the local $\A^1$-degree is taken.

Let $k$ be a field, and let $f = (f_1,\ldots,f_n) : \A^n_k \to \A^n_k$ be an endomorphism of affine space with isolated zeros, so that $Q := k[x_1,\ldots,x_n]/(f_1,\ldots,f_n)$ is a complete intersection. We now recall the definition of the B\'ezoutian of $f$, as well as a special bilinear form determined by the B\'ezoutian. Introduce new variables $X:=(X_1,\ldots,X_n)$ and $Y:=(Y_1,\ldots,Y_n)$. For each $1\le i,j\le n$, define the quantity
\begin{align*}
    \Delta_{ij} := \frac{f_i(Y_1, \ldots, Y_{j-1},X_j, \ldots, X_n) - f_i \left( Y_1, \ldots, Y_j, X_{j+1}, \ldots, X_n \right)}{X_j - Y_j}.
\end{align*}

\begin{defn}\label{def:Bezoutian-intro-def}
The \textit{B\'ezoutian} of $f$ is the image $\Bez(f_1, \ldots, f_n)$ of the determinant $\det \left( \Delta_{ij} \right)$ in $k[X,Y]/(f(X),f(Y))$. Given a basis $\{a_1,\ldots,a_m\}$ of $Q$ as a $k$-vector space, there exist scalars $B_{i,j}$ for which
\[\Bez(f_1,\ldots,f_n)=\sum_{i,j=1}^m B_{i,j}a_i(X)a_j(Y).\]
We define the \textit{B\'ezoutian form} of $f$ to be the class $\beta_f$ in the Grothendieck--Witt ring $\GW(k)$ determined by the bilinear form $Q\times Q\to k$ with Gram matrix $(B_{i,j})$.
\end{defn}

For any isolated zero of $f$ corresponding to a maximal ideal $\mathfrak{m}$, there is an analogous bilinear form $\beta_{f,\mathfrak{m}}$ on the local algebra $Q_\mathfrak{m}$. We refer to $\beta_{f,\mathfrak{m}}$ as the \textit{local B\'ezoutian form} of $f$ at $\mathfrak{m}$. We will demonstrate that both $\beta_f$ and $\beta_{f,\mathfrak{m}}$ yield well-defined classes in $\GW(k)$. Our main theorem is that the B\'ezoutian form of $f$ agrees with the $\A^1$-degree in both the local and global contexts.

\begin{theorem}\label{thm:main-thm} 
Let $\Char{k}\neq 2$. Let $f: \A^n_k \to \A^n_k$ have an isolated zero at a closed point $\mathfrak{m}$. Then $\beta_{f,\mathfrak{m}}$ is isomorphic to the local $\A^1$-degree of $f$ at $\mathfrak{m}$. If we further assume that all the zeros of $f$ are isolated, then $\beta_f$ is isomorphic to the global $\A^1$-degree of $f$.
\end{theorem}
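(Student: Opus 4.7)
The plan is to identify the B\'ezoutian form with the Scheja--Storch bilinear form attached to the complete intersection $Q$, and then to show that the Scheja--Storch form agrees with the $\A^1$-degree. The first identification is essentially formal once one recognizes $\det(\Delta_{ij})$ as a Scheja--Storch representative. Indeed, telescoping gives $\sum_{j=1}^n \Delta_{ij}(X_j - Y_j) = f_i(X) - f_i(Y)$, so the matrix $(\Delta_{ij})$ realizes the standard Scheja--Storch decomposition $f_i(X) - f_i(Y) = \sum_j a_{ij}(X_j - Y_j)$. The Scheja--Storch theorem asserts that the image of $\det(a_{ij})$ in $Q \otimes_k Q$ is independent of the decomposition and generates the socle via the trace pairing. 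Consequently, the class $\beta_f \in \GW(k)$ obtained from the expansion $\Bez(f_1,\ldots,f_n) = \sum_{i,j} B_{ij}\, a_i(X) a_j(Y)$ is well-defined and coincides with the Scheja--Storch form on $Q$.

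The second identification amounts to showing that the Scheja--Storch form computes the $\A^1$-degree. At rational zeros this is the Kass--Wickelgren identification of the EKL form with the local $\A^1$-degree, and for $n=1$ it is Cazanave's theorem. To extend to an arbitrary closed point $\mathfrak{m}$ with residue field $\kappa(\mathfrak{m})$, I would first reduce to the local case via the idempotent decomposition $Q \cong \prod_{\mathfrak{m}} Q_{\mathfrak{m}}$, which is compatible with both the B\'ezoutian form (by multiplicativity of $\det(\Delta_{ij})$ across the product) and the $\A^1$-degree (by additivity of the degree over isolated zeros). Then a base change argument: passing to a field extension $L/k$ splitting $\mathfrak{m}$ into rational zeros reduces to the Kass--Wickelgren case over $L$, and one descends using that both invariants base change compatibly along $\GW(k) \to \GW(L)$.

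The main obstacle will be handling closed points whose residue field extension $\kappa(\mathfrak{m})/k$ is inseparable, where straight base-change-and-descend arguments fail because $\GW(k) \to \GW(L)$ need not be injective on the relevant classes and the geometry of the zero locus over $L$ can degenerate. In this regime one must argue more directly, likely by constructing an $\A^1$-homotopy of $f$ to a nearby endomorphism with \'etale (or rational) zeros, and combining the $\A^1$-invariance of the degree with a corresponding invariance of the B\'ezoutian form under such deformations. Verifying that the B\'ezoutian class is genuinely stable under the required perturbations while preserving the complete intersection hypothesis is the technical heart of the argument, and the assumption $\Char{k} \neq 2$ is likely essential here to invoke enough of $\GW$-theory (e.g., Witt cancellation) to carry out the descent.
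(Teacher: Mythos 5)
Your first step—identifying $\det(\Delta_{ij})$ as a valid choice of Scheja--Storch decomposition via the telescoping identity $\sum_j \Delta_{ij}(X_j - Y_j) = f_i(X) - f_i(Y)$, and concluding that the B\'ezoutian form equals the Scheja--Storch form on $Q$—is correct and matches the paper's Proposition~\ref{prop:delta-equals-Bezoutian} and Lemma~\ref{lem:Bezoutian-equals-SS}. The idempotent decomposition $Q \cong \prod_\mathfrak{m} Q_\mathfrak{m}$ to relate global and local forms is also the paper's route (Lemma~\ref{lem:LGP-for-SS-forms}).

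However, the second part of your argument has a genuine gap, and you yourself flag it: a base-change-and-descend strategy breaks down when $\kappa(\mathfrak{m})/k$ is inseparable, and the deformation/perturbation argument you gesture at to repair it is not carried out and would itself be a substantial theorem. The paper avoids this entirely by invoking a recent result of Bachmann--Wickelgren~\cite{BW}: their Corollary~8.2 gives $\ind_p(f, \rho_{\mathrm{triv}}, \KO) = \mathrm{SS}_p(f)$ in $\KO^0(k)$ for \emph{any} isolated zero (no separability or rationality hypothesis), and their Theorem~7.6/Example~7.7 identifies the local index for representable Hermitian $K$-theory with the local $\A^1$-degree. This is the key technical input that your proposal is missing; without it (or something equivalent) the reduction from a general closed point to the Kass--Wickelgren rational case does not go through. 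Relatedly, you misattribute the role of $\Char{k} \neq 2$: it is not needed for Witt cancellation in a descent step, but rather to ensure $\pi_0(\mathbb{S}) \to \pi_0(\KO)$ is an isomorphism, so that the equality in $\KO^0(k)$ becomes an equality in $\GW(k)$.

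One further point you do not address: the statement of Theorem~\ref{thm:main-thm} uses the form $\beta_f$ whose Gram matrix is $(B_{i,j})$ directly (Definition~\ref{def:Bezoutian-intro-def}), whereas the dualizing-form construction naturally produces $\Phi_\lambda$ whose Gram matrix in the basis $\{a_i\}$ is $(B_{i,j})^{-1}$. The paper's proof of Theorem~\ref{thm:main-thm} in Section~\ref{sec:computing algorithm} closes this gap by showing $(B_{i,j})$ and $(B_{i,j})^{-1}$ represent the same class in $\GW(k)$, via simultaneous diagonalization and $\gw{a} = \gw{1/a}$. Your proposal implicitly conflates the two constructions without this reconciliation.
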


Because the B\'ezoutian form can be explicitly computed using commutative algebraic tools, Theorem~\ref{thm:main-thm} provides a tractable formula for $\A^1$-degrees and Euler classes in motivic homotopy theory. Using the B\'ezoutian formula for the $\A^1$-degree, we are able to deduce several computational rules for the degree. We also provide a Sage implementation for calculating local and global $\A^1$-degrees via the B\'ezoutian at~\cite{code}.

\begin{remark}
The key contribution of this article is computability. Building on the work of Kass--Wickelgren~\cite{KW-EKL}, Bachmann--Wickelgren~\cite{BW} show that the $\A^1$-degree agrees with the Scheja--Storch form as elements of $\KO^0(k)$. In Theorem~\ref{thm:deg-is-SS}, we show how this immediately implies that the $\A^1$-degree and Scheja--Storch form determine the same element of $\GW(k)$. Scheja--Storch~\cite{SchejaStorch} showed that their form is a B\'ezoutian bilinear form (in the sense of Definition~\ref{def:Bezoutian-bilinear-form}; see also Lemma~\ref{lem:Bezoutian-equals-SS} and Remark~\ref{rem:local-version}), which was further explored by Becker--Cardinal--Roy--Szafraniec~\cite{BCRS}. Putting these results together shows that the isomorphism class of the B\'ezoutian bilinear form is the $\A^1$-degree.

In dimension 1, Cazanave~\cite{Cazanave} gives a simple formula for computing the $\A^1$-degree as a B\'ezoutian bilinear form in the global setting. However, it is not immediately clear how to adapt this to higher dimensions or the local setting. Becker--Cardinal--Roy--Szafraniec show how to compute B\'ezoutian bilinear forms in terms of ``dualizing forms,'' but this method is computationally analogous to using the Eisenbud--Khimshiashvili--Levine form to compute the $\A^1$-degree~\cite{KW-EKL}. In the proof of Theorem~\ref{thm:main-thm} (found in Section~\ref{sec:proof-of-main-theorem}), we show that our two notions of B\'ezoutian bilinear forms (Definitions~\ref{def:Bezoutian-intro-def} and~\ref{def:Bezoutian-bilinear-form}) agree up to isomorphism. Since Definition~\ref{def:Bezoutian-intro-def} is the desired generalization of Cazanave's formula, this enables us to calculate $\A^1$-degrees in full generality.
\end{remark}

\subsection{Outline}
Before proving Theorem~\ref{thm:main-thm}, we recall some classical results on B\'ezoutians (following~\cite{BCRS}) in Section~\ref{sec:Bezoutians}, as well as the work of Scheja--Storch on residue pairings~\cite{SchejaStorch} in Section~\ref{sec:Scheja-Storch}. We then discuss a local decomposition procedure for the Scheja--Storch form and show that the global Scheja--Storch form is isomorphic to the B\'ezoutian form in Section~\ref{sec:local-to-global}. In Section~\ref{sec:proof-of-main-theorem}, we complete the proof of Theorem~\ref{thm:main-thm} by applying the work of Kass--Wickelgren~\cite{KW-EKL} and Bachmann--Wickelgren~\cite{BW} on the local $\A^1$-degree and the Scheja--Storch form. Using Theorem~\ref{thm:main-thm}, we give an algorithm for computing the local and global $\A^1$-degree at the end of Section~\ref{sec:computing algorithm}, available at~\cite{code}. In Section~\ref{sec:calculation-rules}, we establish some basic properties for computing degrees. In Section~\ref{sec:examples}, we provide a step-by-step illustration of our ideas by working through some explicit examples. Finally, we implement our code to compute some examples of $\A^1$-Euler characteristics of Grassmannians in Section~\ref{sec:Gr}. We check our computations by proving a general formula for the $\A^1$-Euler characteristic of a Grassmannian in Theorem~\ref{thm:grassmannian}. The $\A^1$-Euler characteristic of Grassmannians is essentially a folklore result that follows from the work of Hoyois, Levine, and Bachmann--Wickelgren.

\subsection{Background}
Let $\GW(k)$ denote the Grothendieck--Witt group of isomorphism classes of symmetric, non-degenerate bilinear forms over a field $k$. Morel's $\A^1$-Brouwer degree~\cite[Corollary 1.24]{Morel}
\[\deg:[\P^n_k/\P^{n-1}_k,\P^n_k/\P^{n-1}_k]_{\A^1}\to\GW(k),\]
which is a group isomorphism (in fact, a ring isomorphism~\cite[Lemma 6.3.8]{Morel-Trieste}) for $n\geq 2$, demonstrates that bilinear forms play a critical role in motivic homotopy theory. However, Morel's $\A^1$-degree is non-constructive. Kass and Wickelgren addressed this problem by expressing the $\A^1$-degree as a sum of local degrees~\cite[Lemma~19]{KW-cubic} and providing an explicit formula (building on the work of Eisenbud--Levine~\cite{EisenbudLevine} and Khimshiashvili~\cite{Khim}) for the local $\A^1$-degree~\cite{KW-EKL} at rational points and \'etale points. This explicit formula can also be used to compute the local $\A^1$-degree at points with separable residue field by~\cite{trace-paper}. Together, these results allow one to compute the global $\A^1$-degree of a morphism $f:\A^n_k\to\A^n_k$ with only isolated zeros by computing the local $\A^1$-degrees of $f$ over its zero locus, so long as the residue field of each point in the zero locus is separable over the base field. In the local case, Theorem~\ref{thm:main-thm} gives a commutative algebraic formula for the local $\A^1$-degree at any closed point.

Cazanave showed that the B\'ezoutian gives a formula for the global $\A^1$-degree of any endomorphism of $\P^1_k$~\cite{Cazanave}. An advantage to Cazanave's formula is that one does not need to determine the zero locus or other local information about $f$. We extend Cazanave's formula for morphisms $f:\A^n_k\to\A^n_k$ with isolated zeros. The work of Scheja--Storch on global complete intersections~\cite{SchejaStorch} is central to both~\cite{KW-EKL} and our result. We also rely on the work of Becker--Cardinal--Roy--Szafraniec~\cite{BCRS}, who describe a procedure for recovering the global version of the Scheja--Storch form.

Theorem~\ref{thm:main-thm} has applications wherever Morel's $\A^1$-degree is used. One particularly successful application of the $\A^1$-degree has been the $\A^1$-enumerative geometry program. The goal of this program is to enrich enumerative problems over arbitrary fields by producing $\GW(k)$-valued enumerative equations and interpreting them geometrically over various fields. Notable results in this direction include Srinivasan and Wickelgren's count of lines meeting four lines in three-space \cite{SW}, Larson and Vogt's count of bitangents to a smooth plane quartic \cite{LV}, and Bethea, Kass, and Wickelgren's enriched Riemann--Hurwitz formula \cite{BKW}. See~\cite{McKean,Pauli} for other related works. For a more detailed account of recent developments in $\mathbb{A}^1$-enumerative geometry, see \cite{Brazelton-expos,Pauli-expos}.

\subsection*{Acknowledgements}
We thank Tom Bachmann, Gard Helle, Kyle Ormsby, Paul Arne \O stv\ae r, and Kirsten Wickelgren for helpful comments. We thank Kirsten Wickelgren for suggesting that we make available a code implementation for computing local and global $\A^1$-degrees. Thank you to Herman Rohrbach for pointing out a typo. The first named author is supported by an NSF Graduate Research Fellowship (DGE-1845298). The second named author received support from Kirsten Wickelgren's NSF CAREER grant (DMS-1552730). The last named author acknowledges support from a project that has received funding from the European Research Council (ERC) under the European Union's Horizon 2020 research and innovation program (grant agreement No 832833).

\section{Notation and conventions}

In this section, we fix some standard terminology and notation. Let $k$ denote an arbitrary field. We will always use $f = (f_1, \ldots, f_n) : \A^n_k \to \A^n_k$ to denote an endomorphism of affine space, assumed to have isolated zeros when we work with it in the global context. We denote by $Q$ the global algebra associated to this endomorphism
\begin{align*}
    Q := \frac{k[x_1, \ldots, x_n]}{(f_1, \ldots, f_n)}.
\end{align*}
For any maximal ideal $\mathfrak{m}$ of $Q$ on which $f$ vanishes, we denote by $Q_\mathfrak{m}$ the local algebra
\begin{align*}
    Q_\mathfrak{m} := \frac{k[x_1, \ldots, x_n]_{\mathfrak{m}}}{(f_1, \ldots, f_n)}.
\end{align*}
If $\lambda : V \to k$ is a $k$-linear form on any $k$-algebra, we will denote by $\Phi_\lambda$ the associated bilinear form given by
\begin{align*}
    \Phi_\lambda : V \times V &\to k \\
    (a,b) &\mapsto \lambda(ab).
\end{align*}

\begin{defn}
We say that $\lambda$ is a \textit{dualizing linear form} if $\Phi_\lambda$ is non-degenerate as a symmetric bilinear form \cite[2.1]{BCRS}. If $\lambda$ is dualizing, then we say that two vector space bases $\{a_i\}$ and $\{b_i\}$ of $V$ are \textit{dual with respect to} $\lambda$ if
\begin{align*}
    \lambda(a_i b_j) = \delta_{ij},
\end{align*}
where $\delta_{ij}=1$ for $i=j$ and $\delta_{ij}=0$ for $i\neq j$. We show in Remark~\ref{rem:dual vs dualizing} that if $\{a_i\}$ and $\{b_i\}$ are dual with respect to $\lambda$, then $\lambda$ is a dualizing linear form.
\end{defn}

More notation will be introduced as we provide an overview of B\'{e}zoutians and the Scheja--Storch bilinear form. We will borrow and clarify notation from both \cite{SchejaStorch} and \cite{BCRS}.

\section{B\'ezoutians}\label{sec:Bezoutians}

We first provide an overview of the construction of the \textit{B\'ezoutian}, following \cite{BCRS}. Given one of our $n$ polynomials $f_i$, we introduce two sets of auxiliary indeterminants and study how $f_i$ changes when we incrementally exchange one set of indeterminants for the other. Explicitly, consider variables $X:=(X_1,\ldots,X_n)$ and $Y:=(Y_1,\ldots,Y_n)$. For any $1\leq i,j\leq n$, we denote by $\Delta_{ij}$ the quantity
\begin{align*}
    \Delta_{ij} := \frac{f_i(Y_1, \ldots, Y_{j-1},X_j, \ldots, X_n) - f_i \left( Y_1, \ldots, Y_j, X_{j+1}, \ldots, X_n \right)}{X_j - Y_j}.
\end{align*}
We view this as living in the tensor product ring $Q \otimes_k Q$, under the isomorphism
\begin{align*}
    \varepsilon:\frac{k[X,Y]}{(f(X),f(Y))} \xto{\cong} Q \otimes_k Q,
\end{align*}
given by sending $X_i$ to $x_i \otimes 1$, and $Y_i$ to $1\otimes x_i$.
\begin{definition}\label{def:Bezoutian} We define the \textit{B\'ezoutian} of the polynomials $f_1,\ldots,f_n$ to be the image $\Bez(f_1,\ldots,f_n)$ of the determinant $\det\left( \Delta_{ij} \right)$ in $Q \otimes_k Q$.
\end{definition}

\begin{example}\label{ex:a3-squared-map} Let $(f_1,f_2,f_3) = (x_1^2, x_2^2, x_3^2)$. Then we have that
\begin{align*}
    \Bez(f_1,f_2,f_3) &= \varepsilon\left(\det \begin{pmatrix} X_1 + Y_1 & 0 & 0 \\ 0 & X_2 + Y_2 & 0 \\ 0 & 0 & X_3 + Y_3 \end{pmatrix}\right) \\
    &= \varepsilon\left((X_1 + Y_1)(X_2 + Y_2)(X_3 + Y_3)\right)\\
    &= x_1x_2x_3\otimes 1+x_1x_2\otimes x_3+x_1x_3\otimes x_2+x_2x_3\otimes x_1\\
    &+x_1\otimes x_2x_3+x_2\otimes x_1x_3+ x_3\otimes x_1x_2+1\otimes x_1x_2x_3.
\end{align*}
\end{example}

There is a natural multiplication map $\delta : Q \otimes_k Q \to Q$, defined by $\delta(a\otimes b)=ab$, that sends the B\'ezoutian of $f$ to the image of the Jacobian of $f$ in $Q$.

\begin{proposition} 
Let $\Jac(f_1,\ldots,f_n)$ be the image of the Jacobian determinant $\det(\frac{\partial f_i}{\partial x_j})$ in $Q$. Then
\begin{align*}
    \delta \left( \Bez(f_1, \ldots, f_n) \right)&= \Jac(f_1, \ldots, f_n) \in Q.
\end{align*}
\end{proposition}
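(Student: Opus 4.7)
The plan is to show that applying $\delta$ to $\Bez(f_1,\ldots,f_n)$ amounts to substituting $X_i = Y_i$ in the matrix $(\Delta_{ij})$ before taking the determinant, and that this substitution turns each entry $\Delta_{ij}$ into the partial derivative $\frac{\partial f_i}{\partial x_j}$. The identification of $\delta$ with a substitution comes directly from unwinding the isomorphism $\varepsilon$: the composition $\delta \circ \varepsilon$ is a well-defined ring homomorphism $k[X,Y]/(f(X),f(Y)) \to Q$ sending both $X_i$ and $Y_i$ to $x_i$, since $f(X)$ and $f(Y)$ both map to $f(x) = 0$ in $Q$. Because ring homomorphisms commute with taking determinants of matrices, it suffices to compute the image of each $\Delta_{ij}$ under the substitution $X = Y$.

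The key calculation is a standard divided-difference identity. Fix indices $i,j$ and regard the numerator
\[
N_{ij}(X,Y) := f_i(Y_1, \ldots, Y_{j-1}, X_j, X_{j+1}, \ldots, X_n) - f_i(Y_1, \ldots, Y_{j-1}, Y_j, X_{j+1}, \ldots, X_n)
\]
as a polynomial in $X_j$ (with the other variables as parameters). Since $N_{ij}$ vanishes when $X_j = Y_j$, it is divisible by $X_j - Y_j$ in the polynomial ring, so $\Delta_{ij}$ is a genuine polynomial in $k[X,Y]$ (and hence has a well-defined image in $Q \otimes_k Q$). Writing $g(t) := f_i(Y_1,\ldots,Y_{j-1}, t, X_{j+1}, \ldots, X_n)$, we have $\Delta_{ij} = \frac{g(X_j) - g(Y_j)}{X_j - Y_j}$. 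The elementary identity $\left.\frac{g(X_j) - g(Y_j)}{X_j - Y_j}\right|_{X_j = Y_j} = g'(Y_j)$ then gives
\[
\Delta_{ij}\bigm|_{X = Y} = \frac{\partial f_i}{\partial x_j}(Y_1, \ldots, Y_n).
\]

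Combining these two observations finishes the proof: applying $\delta \circ \varepsilon$ to $\det(\Delta_{ij})$ yields the determinant of the matrix with $(i,j)$-entry $\frac{\partial f_i}{\partial x_j}(x_1,\ldots,x_n)$ in $Q$, which is exactly $\Jac(f_1, \ldots, f_n)$. There is no real obstacle beyond making precise that $\Delta_{ij}$ is a polynomial (so the substitution $X=Y$ is legitimate) and that $\delta$ is correctly identified with the diagonal substitution under $\varepsilon$; both are bookkeeping, and the mathematical content is the single-variable divided-difference identity.
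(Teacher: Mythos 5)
Your proof is correct and takes essentially the same approach as the paper: both reduce to the entrywise claim $(\delta\circ\varepsilon)(\Delta_{ij}) = \partial f_i/\partial x_j$ via the fact that $\delta\circ\varepsilon$ is a ring homomorphism commuting with determinants, and both establish the entrywise claim by the single-variable divided-difference identity. The only cosmetic difference is that the paper substitutes $X_\ell = Y_\ell = x_\ell$ for $\ell \neq j$ first and then differentiates the resulting one-variable factorization, whereas you keep all variables as parameters and invoke the identity $\left.\tfrac{g(X_j)-g(Y_j)}{X_j - Y_j}\right|_{X_j=Y_j}=g'(Y_j)$ directly; these are the same computation.
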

\begin{proof} 
Note that $(\delta\circ\varepsilon)(a(X,Y))=a(x,x)$ and $\delta\circ\varepsilon$ is an algebra homomorphism. In particular, $\delta\circ\varepsilon$ preserves the multiplication and addition occurring in the determinant which defines $\Bez(f_1, \ldots, f_n)$. Therefore it suffices for us to verify that
\begin{align*}
    (\delta\circ\varepsilon) \left( \Delta_{ij} \right) = \frac{\partial f_i}{\partial x_j}.
\end{align*}
Recall that
\begin{align*}
    \Delta_{ij} &= \frac{f_i(Y_1, \ldots, Y_{j-1}, X_j, \ldots, X_n) - f_i (Y_1, \ldots, Y_j, X_{j+1}, \ldots, X_n)}{X_j - Y_j}.
\end{align*}
To understand $(\delta\circ\varepsilon)(\Delta_{ij})$, we first evaluate $X_\ell\mapsto x_\ell$ and $Y_\ell\mapsto x_\ell$ for $\ell\neq j$. Since $\Delta_{ij}$ is a polynomial, it follows that we may write
\begin{align*}
    f_i(x_1, \ldots, X_j, \ldots, x_n) - f_i(x_1, \ldots, Y_j, \ldots, x_n) &= g_i(x_1,\ldots, X_j,\ldots,x_n,Y_j) \cdot (X_j - Y_j)
\end{align*}
for some polynomial $g_i$. Derive both sides of the equality above with respect to $X_j$. Then we have that
\begin{align*}
    \frac{\partial f_i}{\partial X_j} &= \frac{\partial g_i}{\partial X_j}(X_j - Y_j) + g_i(x_1,\ldots, X_j,\ldots,x_n,Y_j).
\end{align*}
We may now apply $\delta\circ\varepsilon$. The left hand side remains unchanged, while the first term on the right hand side vanishes, leaving us with
\begin{align*}
    \frac{\partial f_i}{\partial x_j} &= g_i(x_1,\ldots,x_j,\ldots,x_n,x_j) = (\delta\circ\varepsilon) \left( \Delta_{ij} \right).\qedhere
\end{align*}
\end{proof}

\begin{lemma}
\label{lemma:bezoutian-2-bases}
Let $a_1, \ldots, a_m$ be any vector space basis for $Q$, and write the B\'{e}zoutian as
\begin{align*}
    \Bez(f_1, \ldots, f_n) = \sum_{i=1}^m a_i \otimes b_i
\end{align*}
for some $b_1,\ldots,b_n\in Q$. 
Then $\left\{ b_i \right\}_{i=1}^m$ is a basis for $Q$ . 
\end{lemma}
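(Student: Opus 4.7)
The plan is to show that the Bezoutian $B := \Bez(f_1, \ldots, f_n) \in Q \otimes_k Q$ is a \emph{non-degenerate} tensor, in the sense that the induced $k$-linear map
\[
\phi_B : Q^\vee \longrightarrow Q, \qquad \lambda \longmapsto (\lambda \otimes \id)(B),
\]
where $Q^\vee := \Hom_k(Q,k)$, is an isomorphism. Once this is established, for any basis $\{a_i\}$ of $Q$ with dual basis $\{a_i^\vee\} \subset Q^\vee$, writing $B = \sum_i a_i \otimes b_i$ we have $\phi_B(a_i^\vee) = b_i$. Since $\phi_B$ is an isomorphism and $\{a_i^\vee\}$ is a basis of $Q^\vee$, the collection $\{b_i\}$ is then automatically a basis of $Q$.

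The first step is a formal telescoping identity: by the very definition of $\Delta_{ij}$, one has $\sum_{j=1}^n (X_j - Y_j) \Delta_{ij} = f_i(X) - f_i(Y)$. Hence modulo $(f(X), f(Y))$, the column vector $(X_j - Y_j)_{j=1}^n$ lies in the kernel of the matrix $(\Delta_{ij})$. Multiplying on the left by the classical adjugate of $(\Delta_{ij})$ forces $\det(\Delta_{ij}) \cdot (X_j - Y_j) \equiv 0 \pmod{(f(X), f(Y))}$ for every $j$. Transporting through the isomorphism $\varepsilon$, this gives $(x_j \otimes 1 - 1 \otimes x_j)\cdot B = 0$ in $Q \otimes_k Q$ for each $j$, and hence by extension $(h \otimes 1) \cdot B = (1 \otimes h) \cdot B$ for every $h \in Q$. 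A direct calculation with this identity shows that $\phi_B$ is in fact $Q$-linear, when $Q^\vee$ is equipped with the natural $Q$-action $(h \cdot \lambda)(x) := \lambda(hx)$.

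The main obstacle is upgrading $\phi_B$ from a $Q$-linear map to an isomorphism. The strategy is to exploit that $Q = k[x_1,\ldots,x_n]/(f_1,\ldots,f_n)$ is a zero-dimensional complete intersection $k$-algebra, hence Gorenstein, so that $Q^\vee \cong Q$ as $Q$-modules; under this identification $\phi_B$ becomes multiplication by some element $q \in Q$, and is an isomorphism precisely when $q \in Q^\times$. To verify this I would decompose $Q$ into its local factors $\prod_\mathfrak{m} Q_\mathfrak{m}$ and check at each maximal ideal that the image of $B$ in $Q_\mathfrak{m} \otimes_k Q_\mathfrak{m}$ pairs non-trivially with the socle, i.e., that the corresponding $q_\mathfrak{m}$ is a unit in $Q_\mathfrak{m}$. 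This is precisely the non-degeneracy underlying the Scheja--Storch residue pairing, which will be developed in Section~\ref{sec:Scheja-Storch}; alternatively one can cite the explicit treatment in \cite{BCRS}, where the dual functional $\eta \in Q^\vee$ satisfying $\phi_B(\eta) = 1$ is constructed as the Scheja--Storch trace form, furnishing the desired inverse.
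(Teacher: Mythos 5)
The paper dispatches this lemma with a single citation to \cite[2.10(iii)]{BCRS}, so there is no in-paper argument to compare against; your sketch is genuine added content. Your contraction map $\phi_B(\lambda) = (\lambda\otimes\id)(\Bez(f)) = \sum_i\lambda(a_i)b_i$ is precisely the map $\Theta$ the paper introduces in Proposition~\ref{prop:equivalent-conditions-bases-dual} and identifies with $\chi_Q(\Delta)$ in Section~\ref{sec:Scheja-Storch}, and your telescoping computation showing $(h\otimes 1 - 1\otimes h)\cdot\Bez(f) = 0$, hence $Q$-linearity of $\phi_B$, is correct and is exactly what distinguishes the B\'ezoutian among general tensors in $Q\otimes_k Q$. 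The one step you genuinely defer is the surjectivity of $\phi_B$ (equivalently, the existence of some $\eta$ with $\phi_B(\eta)=1$), which you attribute to Scheja--Storch or \cite{BCRS}; this is the same appeal the paper makes elsewhere via \cite[3.3 Satz]{SchejaStorch} for $\Theta$ being a $Q$-module isomorphism, so the overall argument is sound. One small simplification worth noting: once $Q$-linearity is in hand, the Gorenstein detour through a chosen identification $Q^\vee\cong Q$ and a multiplier $q$ is unnecessary. Since $Q^\vee$ and $Q$ are $k$-vector spaces of the same finite dimension, surjectivity of $\phi_B$ already yields bijectivity, and surjectivity follows from $Q$-linearity together with $1\in\operatorname{im}\phi_B$. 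The whole proof thus collapses to producing one functional $\eta$ with $\phi_B(\eta)=1$, which is the content of the Scheja--Storch result you cite at the end.
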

\begin{proof}
This is \cite[2.10(iii)]{BCRS}.
\end{proof}
This allows us to associate to the B\'ezoutian a pair of vector space bases for $Q$. Given any such pair of bases, we will construct a unique linear form for which the bases are dual. Before doing so, we establish some equivalent conditions for the duality of a linear form given a pair of bases.

\begin{proposition}\label{prop:equivalent-conditions-bases-dual}
Let $\left\{ a_i \right\}$ and $\left\{ b_i \right\}$ be a pair of bases for $B$. Consider the induced $k$-linear isomorphism
\begin{align*}
    \Theta: \Hom_k(Q,k) &\to Q \\
    \varphi &\mapsto \sum_i \varphi(a_i)b_i.
\end{align*}
Given a linear form $\lambda: Q \to k$, the following are equivalent:
\begin{enumerate}
    \item We have that $\Theta(\lambda) = \sum_i \lambda(a_i)b_i = 1$.
    \item For any $a\in Q$, we have $a = \sum_i \lambda(aa_i)b_i$.
    \item We have that $\left\{ a_i \right\}$ and $\left\{ b_i \right\}$ are dual with respect to $\lambda$.
\end{enumerate}

\end{proposition}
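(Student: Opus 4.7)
The plan is to prove the three-way equivalence by establishing the chain $(3) \Rightarrow (2) \Rightarrow (1)$ together with $(2) \Rightarrow (3)$ by direct linear algebra, and then handling $(1) \Rightarrow (2)$ as the main substantive step.

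For $(3) \Rightarrow (2)$, I would expand any $a \in Q$ in the basis $\{b_j\}$ as $a = \sum_j c_j b_j$. Then $(3)$ gives $\lambda(a a_i) = \sum_j c_j \lambda(b_j a_i) = c_i$, so $\sum_i \lambda(a a_i) b_i = \sum_i c_i b_i = a$. Dually, for $(2) \Rightarrow (3)$ I would specialize $a = b_j$ in $(2)$, obtaining $b_j = \sum_i \lambda(b_j a_i) b_i$, and then linear independence of $\{b_i\}$ forces $\lambda(b_j a_i) = \delta_{ij}$. The implication $(2) \Rightarrow (1)$ is immediate from specializing $a = 1$.

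For $(1) \Rightarrow (2)$, I would lift condition $(1)$ to a statement about the tensor $\omega := \sum_i a_i \otimes b_i \in Q \otimes_k Q$, observing that $\Theta(\lambda) = (\lambda \otimes \id_Q)(\omega)$, so $(1)$ reads $(\lambda \otimes \id_Q)(\omega) = 1$. The crucial ingredient is then the symmetry $(a \otimes 1)\omega = (1 \otimes a)\omega$ for every $a \in Q$, which lets one apply $\lambda \otimes \id_Q$ to both sides to obtain
\[
\sum_i \lambda(a a_i)\, b_i = \sum_i \lambda(a_i)\, a b_i = a \sum_i \lambda(a_i) b_i = a,
\]
where the last equality invokes $(1)$.

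The main obstacle is thus establishing this symmetry of $\omega$, which is equivalent to upgrading $\Theta$ from a $k$-linear isomorphism to a $Q$-linear one (with respect to the natural $Q$-module structure $(q \cdot \varphi)(x) := \varphi(qx)$ on $\Hom_k(Q,k)$). This is a structural feature not visible in the bare linear-algebraic data; in the paper's setting the bases arise via Lemma~\ref{lemma:bezoutian-2-bases} from the B\'ezoutian decomposition, so $\omega$ is (under the isomorphism $\varepsilon$) the B\'ezoutian itself, and its symmetry reflects the residue-theoretic properties of the Scheja--Storch pairing for complete intersections developed in the next section.
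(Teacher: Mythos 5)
Your argument is correct and tracks the paper's proof closely: the implications $(2)\Rightarrow(1)$ and $(2)\Leftrightarrow(3)$ are handled identically, and for $(1)\Rightarrow(2)$ both arguments ultimately rest on the same structural fact, merely phrased differently. The paper invokes that $\Theta$ is a $Q$-module isomorphism (citing Scheja--Storch's 3.3~Satz), giving $a\cdot\Theta(\lambda)=\Theta(a\cdot\lambda)$, i.e.\ $a\sum_i\lambda(a_i)b_i=\sum_i\lambda(aa_i)b_i$, and then combines this with $(1)$. You reformulate the same input as the relation $(a\otimes 1)\omega=(1\otimes a)\omega$ for $\omega=\sum_i a_i\otimes b_i$ and apply $\lambda\otimes\id$. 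These are equivalent, since $Q$-linearity of $\Theta=\chi_Q(\omega)$ for the module structure $a\cdot\varphi=\varphi(a\cdot-)$ is exactly the statement that $\omega$ intertwines the two $Q$-actions on $Q\otimes_k Q$. Your remark that this symmetry is ``not visible in the bare linear-algebraic data'' is on target, and if anything more scrupulous than the paper's exposition: $(1)\Rightarrow(2)$ genuinely fails for an arbitrary pair of bases. For instance, with $Q=k[x]/(x^2)$, $\{a_i\}=\{1,x\}$, $\{b_i\}=\{1,1+x\}$, and $\lambda(1)=1$, $\lambda(x)=0$, one has $\Theta(\lambda)=1$ yet $\sum_i\lambda(xa_i)b_i=0\neq x$. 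The proposition is only ever applied when $\{a_i\},\{b_i\}$ arise from a decomposition of the B\'ezoutian element $\Delta$, for which Scheja--Storch's 3.3~Satz supplies the required $Q$-linearity of $\Theta$; the only thing missing from your write-up is that explicit citation in place of the allusion to ``residue-theoretic properties developed in the next section.''
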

\begin{proof} 
Note that (2) implies (1) by setting $a=1$. Next, we remark that $\Theta$ is a $Q$-module isomorphism by \cite[3.3 Satz]{SchejaStorch}, where the $Q$-module structure on $\Hom_k(Q,k)$ is given by $a\cdot \varphi = \varphi(a\cdot -)$. This allows us to conclude that $a\cdot \Theta(\lambda) = \Theta(a\cdot \lambda)$ for any linear form $\lambda$. In particular, we have
\begin{align*}
    a \sum_i \lambda(a_i)b_i = \sum_i \lambda(aa_i)b_i.
\end{align*}
It follows from this identity that (1) implies (2). Now suppose that (2) holds. By setting $a=b_j$ for some $j$, we have
\begin{align*}
    \sum_i\lambda(a_ib_j)b_i=b_j.
\end{align*}
Since $\{b_i\}$ is a basis, it follows that $\lambda(a_ib_j) = \delta_{ij}$. Thus the bases $\{a_i\}$ and $\{b_i\}$ are dual with respect to $\lambda$. Finally, suppose that (3) holds, so that $\lambda(a_ib_j)=\delta_{ij}$. For any $a\in Q$, write $a$ as $a := \sum_j c_j b_j$ for some scalars $c_j$. Then
\begin{align*}
    \sum_i \lambda (aa_i)b_i &= \sum_i \lambda \left( a_i\sum_j c_j b_j \right)b_i = \sum_i \left( \sum_j c_j \lambda(a_ib_j) \right)b_i \\
    &= \sum_i c_i b_i = a.
\end{align*}
Thus (3) implies (2).
\end{proof}

\begin{remark}\label{rem:dual vs dualizing}
If $\{a_i\}$ and $\{b_i\}$ are dual with respect to $\lambda$, then $\lambda$ is a dualizing form. Indeed, suppose there exists $x\in Q$ such that $\Phi_\lambda(x,y)=0$ for all $y\in Q$. Write $x=\sum_i x_ia_i$ with $x_i\in k$. Then 
\begin{align*}
    0&=\lambda(xb_j)=\lambda\left(\sum_ix_ia_ib_j\right)\\
    &=\sum_ix_i\lambda(a_ib_j)=x_j
\end{align*}
for all $j$, so $x=0$.
\end{remark}

As $\Theta$ is a $k$-algebra isomorphism, it admits a unique preimage of 1. Thus, given any pair of bases $\{a_i\}$ and $\{b_i\}$ of $Q$, there is a unique dualizing linear form with respect to which $\{a_i\}$ and $\{b_i\}$ are dual.

\begin{cor} \label{cor:existence-dualizing-linear-form}
Let $\{a_i\}$ and $\{b_i\}$ be two $k$-vector space bases for $Q$. Then there exists a unique dualizing linear form $\lambda:Q\to k$ such that $\{a_i\}$ and $\{b_i\}$ are dual with respect to $\lambda$.
\end{cor}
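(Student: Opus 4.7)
The plan is to read off both existence and uniqueness directly from the machinery set up in Proposition~\ref{prop:equivalent-conditions-bases-dual} and Remark~\ref{rem:dual vs dualizing}, with essentially no additional work. The key observation is that the map
\begin{align*}
    \Theta : \Hom_k(Q,k) &\to Q, \\
    \varphi &\mapsto \sum_i \varphi(a_i)b_i,
\end{align*}
is a $k$-linear isomorphism (in fact a $Q$-module isomorphism, as invoked in the proof of Proposition~\ref{prop:equivalent-conditions-bases-dual} via \cite[3.3 Satz]{SchejaStorch}). Since $\Theta$ is a bijection, there is a unique $\lambda \in \Hom_k(Q,k)$ with $\Theta(\lambda) = 1$.

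Next I would invoke Proposition~\ref{prop:equivalent-conditions-bases-dual} to translate the equation $\Theta(\lambda) = 1$, which is condition~(1) of that proposition, into condition~(3): the bases $\{a_i\}$ and $\{b_i\}$ are dual with respect to $\lambda$. This gives the existence of a $\lambda$ for which the two bases are dual, and uniqueness follows from the fact that any $\lambda$ making $\{a_i\}$ and $\{b_i\}$ dual must satisfy $\lambda(a_i b_j) = \delta_{ij}$, hence must satisfy $\Theta(\lambda) = \sum_i \lambda(a_i \cdot 1) b_i = 1$ after expanding $1$ in the basis $\{b_i\}$ and using condition~(2) of Proposition~\ref{prop:equivalent-conditions-bases-dual}. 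Either way, uniqueness reduces to the injectivity of $\Theta$.

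Finally, I would note that the $\lambda$ so obtained is automatically a dualizing linear form by Remark~\ref{rem:dual vs dualizing}, which shows that whenever $\{a_i\}$ and $\{b_i\}$ are dual with respect to $\lambda$, the associated symmetric bilinear form $\Phi_\lambda$ is non-degenerate. There is no real obstacle here: the entire statement is a repackaging of the bijectivity of $\Theta$ together with the equivalence $(1) \Leftrightarrow (3)$ established earlier. The only care needed is to observe that one really does have a \emph{unique} preimage (hence uniqueness of $\lambda$), and that the word ``dualizing'' in the conclusion is justified by Remark~\ref{rem:dual vs dualizing} rather than being an extra condition one must verify independently.
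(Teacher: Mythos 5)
Your proposal is correct and matches the paper's own argument exactly: the paper justifies the corollary in the sentence immediately preceding it, observing that $\Theta$ is an isomorphism (so $\Theta^{-1}(1)$ exists and is unique) and then invoking the equivalence $(1)\Leftrightarrow(3)$ of Proposition~\ref{prop:equivalent-conditions-bases-dual} together with Remark~\ref{rem:dual vs dualizing}. Your write-up is just a slightly more explicit version of the same reasoning, so there is nothing to add.
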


\begin{definition}\label{def:Bezoutian-bilinear-form} 
We call $\Phi_\lambda$ a \textit{B\'ezoutian bilinear form} if $\lambda: Q \to k$ is a dualizing linear form such that
\begin{align*}
    \Bez(f_1, \ldots, f_n) = \sum_{i=1}^m a_i \otimes b_i,
\end{align*}
where $\{a_i\}$ and $\{b_i\}$ are dual bases with respect to $\lambda$.
\end{definition}
A priori this is different than the B\'ezoutian form detailed in Definition~\ref{def:Bezoutian-intro-def}, although we will prove that they define the same class in $\GW(k)$ in Section~\ref{sec:computing algorithm}.

\begin{proposition} Given a function $f : \A^n_k \to \A^n_k$ with isolated zeros, its B\'{e}zoutian bilinear form is a well-defined class in $\GW(k)$.
\end{proposition}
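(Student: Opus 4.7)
The plan is to show that the dualizing linear form $\lambda$ appearing in Definition~\ref{def:Bezoutian-bilinear-form} is intrinsically determined by the Bézoutian element $\Bez(f_1,\ldots,f_n)\in Q\otimes_k Q$, independent of any choice of bases. Once this is established, $\Phi_\lambda$ is a single bilinear form on $Q$, and it remains only to verify that it is symmetric and non-degenerate.

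First, I would promote the map $\Theta$ of Proposition~\ref{prop:equivalent-conditions-bases-dual} from a basis-dependent construction to a basis-free one. Any tensor $t\in Q\otimes_k Q$ determines a $k$-linear map
\begin{align*}
    \Theta_t : \Hom_k(Q,k)\to Q,\qquad \varphi\mapsto (\varphi\otimes \id_Q)(t),
\end{align*}
by the universal property of the tensor product. If we happen to expand $t=\sum_i a_i\otimes b_i$ in terms of bases $\{a_i\}$ and $\{b_i\}$, then $\Theta_t(\varphi)=\sum_i \varphi(a_i)b_i$, which is precisely the $\Theta$ of Proposition~\ref{prop:equivalent-conditions-bases-dual}. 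Thus $\Theta_{\Bez(f_1,\ldots,f_n)}$ is intrinsic to $f$.

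Second, by \cite[3.3 Satz]{SchejaStorch} (as invoked in the proof of Proposition~\ref{prop:equivalent-conditions-bases-dual}), $\Theta_{\Bez(f_1,\ldots,f_n)}$ is an isomorphism. Therefore there is a unique preimage $\lambda := \Theta_{\Bez(f_1,\ldots,f_n)}^{-1}(1)\in \Hom_k(Q,k)$. By the equivalence (1)$\Leftrightarrow$(3) in Proposition~\ref{prop:equivalent-conditions-bases-dual}, this $\lambda$ is exactly the linear form with respect to which any pair of bases $\{a_i\},\{b_i\}$ expressing $\Bez(f_1,\ldots,f_n)=\sum_i a_i\otimes b_i$ is dual; and by Remark~\ref{rem:dual vs dualizing}, $\lambda$ is dualizing, so $\Phi_\lambda$ is non-degenerate. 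Symmetry of $\Phi_\lambda$ is immediate from commutativity of $Q$: we have $\Phi_\lambda(a,b)=\lambda(ab)=\lambda(ba)=\Phi_\lambda(b,a)$.

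Combining these steps, $\Phi_\lambda$ is a non-degenerate symmetric bilinear form on the finite-dimensional $k$-vector space $Q$, which is completely determined by the intrinsic tensor $\Bez(f_1,\ldots,f_n)\in Q\otimes_k Q$. It therefore gives a well-defined class in $\GW(k)$. The conceptual step is the first one, that is, recognizing $\Theta$ as a basis-free invariant of the Bézoutian element; the rest is bookkeeping with the results already proved above.
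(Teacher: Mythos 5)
Your proof is correct, and it takes a genuinely cleaner route than the paper. The paper establishes well-definedness of $\lambda$ by expanding the B\'ezoutian in two different basis pairs, introducing change-of-basis matrices $(\alpha_{ij})$ and $(\beta_{ij})$, showing $(\alpha_{ij})^{-1}=(\beta_{ij})^T$, and then computing directly that $\lambda(g)=\lambda'(g)$ for every $g$. You instead observe that $\Theta_{\Bez}(\varphi)=(\varphi\otimes\id_Q)(\Bez(f_1,\ldots,f_n))$ is manifestly basis-free, so that $\lambda=\Theta_{\Bez}^{-1}(1)$ is intrinsically attached to the B\'ezoutian; uniqueness is then automatic and the coordinate computation evaporates. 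This is conceptually the ``right'' argument, and in fact it is implicitly the one the paper uses later when passing to the Scheja--Storch picture via the natural transformation $\chi$.

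One citation quibble: to conclude that $\Theta_{\Bez}$ is an isomorphism you point to \cite[3.3 Satz]{SchejaStorch}, but at this stage of the paper the identification of $\Bez(f_1,\ldots,f_n)$ with the Scheja--Storch element $\Delta$ has not yet been made (that is Proposition~\ref{prop:delta-equals-Bezoutian}). The cleaner reference here is Lemma~\ref{lemma:bezoutian-2-bases}: once you expand $\Bez(f_1,\ldots,f_n)=\sum_i a_i\otimes b_i$ with $\{a_i\}$ a basis, that lemma guarantees $\{b_i\}$ is also a basis, and then $\Theta_{\Bez}$ carries the dual basis of $\{a_i\}$ to $\{b_i\}$ and is therefore an isomorphism. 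With that substitution your argument is self-contained within Section~\ref{sec:Bezoutians}.
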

\begin{proof}
Let $\Phi_\lambda$ be a B\'ezoutian bilinear form for $f$. Recall that $\Phi_\lambda:Q\times Q\to k$ is defined by $\Phi_\lambda(a,b)=\lambda(ab)$. Since $\lambda$ is a dualizing linear form, $\Phi_\lambda$ is non-degenerate and  as $Q$ is commutative, $\Phi_\lambda$ is symmetric. Lemma \ref{lemma:bezoutian-2-bases} implies that given a basis $a_1,\ldots, a_m$ for $Q$, we can write 
\[\Bez(f_1,\ldots,f_n)=\sum_{i=1}^m a_i\otimes b_i,\]
and obtain a second basis $b_1,\ldots, b_m$ for $Q$. By Corollary \ref{cor:existence-dualizing-linear-form}, there is a dualizing linear form for the two bases $\{a_i\}_{i=1}^m$ and $\{b_i\}_{i=1}^m$. It remains to show that if 
\[\Bez(f_1,\ldots,f_n)=\sum_{i=1}^m a_i\otimes b_i=\sum_{i=1}^m a_i'\otimes b_i',\]
for some bases $\{a_i\},\{b_i\}$ dual with respect to $\lambda$ and $\{a_i'\},\{b_i'\}$ dual with respect to $\lambda'$, then $\Phi_\lambda$ and $\Phi_{\lambda'}$ are isomorphic. We will in fact show that $\lambda=\lambda'$, so that $\Phi_\lambda=\Phi_{\lambda'}$. 
Write $a_i=\sum_s\alpha_{is}a_s'$ and $b_i=\sum_s\beta_{is}b_s'$. Then
\begin{align*}
    \sum_{i=1}^m a_i'\otimes b_i'&=\sum_{i=1}^m a_i\otimes b_i =\sum_i\left(\sum_s\alpha_{is}a_s'\right)\otimes\left(\sum_t\beta_{it}b_t'\right)\\
    &=\sum_{s,t}\left(\sum_i\alpha_{is}\beta_{it}\right)a_s'\otimes b_t'.
\end{align*}
Since $\{a_s'\otimes b_t'\}$ is a basis for $Q\otimes_k Q$, we conclude that $\sum_i\alpha_{is}\beta_{it}=\delta_{st}$. In particular, $(\alpha_{ij})^{-1}=(\beta_{ij})^T$, so $(\beta_{ij})(\alpha_{ij})^T$ is the identity matrix. Thus $\sum_j\alpha_{sj}\beta_{tj}=\delta_{st}$.

Now given $g=\sum_i c_ia_i=\sum_i c_i'a_i'\in Q$ and $1=\sum_i d_ib_i=\sum_i d_i'b_i'$, we have that
\begin{align*}
    \lambda(g)=\lambda\left(\sum_i \left(c_ia_i\right)\cdot \sum_j d_j b_j\right)=\sum_{i,j} c_i d_j\lambda\left(a_i b_j\right)=\sum_i c_i d_i.
\end{align*}
Similarly, we have $\lambda'(g)=\sum_i c_i'd_i'.$ By our change of bases, we have $c_j'=\sum_i c_j\alpha_{ij}$ and $d_j'=\sum_i d_i\beta_{ij}$. Thus
\begin{align*}
    \lambda'(g)&=\sum_j c_j'd_j' =\sum_j\left(\sum_s c_s\alpha_{sj}\right)\left(\sum_t d_t\beta_{tj}\right)\\
    &=\sum_{s,t}c_sd_t\left(\sum_j\alpha_{sj}\beta_{tj}\right)=\sum_s c_sd_s=\lambda(g).
\end{align*}
Therefore $\lambda=\lambda'$, as desired. 
\end{proof}

\begin{example} Continuing Example~\ref{ex:a3-squared-map}, let $f = (x_1^2,x_2^2,x_3^2)$, so that
\begin{align*}
    \varepsilon^{-1}(\Bez(f_1, f_2, f_3)) &= (X_1 + Y_1)(X_2 + Y_2)(X_3 + Y_3) \\
    &= X_1 X_2 X_3 + X_1 X_2 Y_3 + X_1 Y_2 X_3 + X_1 Y_2 Y_3 \\
    &\quad + Y_1 X_2 X_3 + Y_1 X_2 Y_3 + Y_1 Y_2 X_3 + Y_1 Y_2 Y_3.
\end{align*}
We give two bases for $k[Z_1, Z_2, Z_3]/(Z_1^2, Z_2^2, Z_3^2)$ in the following table, where we replace $Z$ by either $X$ or $Y$. We pair off these bases in a convenient way.
\begin{center}
    \begin{tabular}{l | l  l}
    $i$ & $a_i$ & $b_i$ \\
    \hline
    1 & 1 & $Y_1 Y_2 Y_3$ \\
    2 & $X_1$ & $Y_2 Y_3$ \\
    3 & $X_2$ & $Y_1 Y_3$ \\
    4 & $X_3$ & $Y_1 Y_2$ \\
    5 & $X_1 X_2$ & $Y_3$ \\
    6 & $X_1 X_3$ & $Y_2$ \\
    7 & $X_2 X_3$ & $Y_1$ \\
    8 & $X_1 X_2 X_3$ & 1
    \end{tabular}
\end{center}
The B\'ezoutian we computed is in the desired form $\sum_{i=1}^8 a_i \otimes b_i$, so we now need to compute the dualizing linear form $\lambda$ for $\{a_i\}$ and $\{b_i\}$. Since $1=1\cdot b_8+\sum_{i=1}^7 0\cdot b_i$, we define $\lambda$ by $\lambda(a_i)=0$ for $1\leq i\leq 7$ and $\lambda(a_8)=\lambda(X_1X_2X_3)=1$. Now let $g \in k[X_1, X_2, X_3]/(X_1^2, X_2^2, X_3^2)$ be arbitrary. We can write $g$ as
\begin{align*}
    g &= c_1 + c_2 X_1 + c_3 X_2 + c_4 X_3 + c_5 X_1 X_2 + c_6 X_1 X_3 + c_7 X_2 X_3  + c_8 X_1 X_2 X_3.
\end{align*}
Then $\lambda$ is the dualizing linear form sending
\begin{align*}
    \lambda : \frac{k[X_1,X_2,X_3]}{(X_1^2, X_2^2, X_3^2)} &\to k \\
    g &\mapsto c_8.
\end{align*}
Finally we can compute the Gram matrix of $\Phi_\lambda$ in the basis $\{a_i\}$. Note that $a_ia_j$ is a scalar multiple of $X_1X_2X_3$ if and only if $i+j-1=8$. Thus the Gram matrix is
\begin{align*}
    \Phi_\lambda = \begin{pmatrix}
0 & 0 & 0 & 0 & 0 & 0 & 0 & 1\\
0 & 0 & 0 & 0 & 0 & 0 & 1 & 0\\
0 & 0 & 0 & 0 & 0 & 1 & 0 & 0\\
0 & 0 & 0 & 0 & 1 & 0 & 0 & 0\\
0 & 0 & 0 & 1 & 0 & 0 & 0 & 0\\
0 & 0 & 1 & 0 & 0 & 0 & 0 & 0\\
0 & 1 & 0 & 0 & 0 & 0 & 0 & 0\\
1 & 0 & 0 & 0 & 0 & 0 & 0 & 0
\end{pmatrix} \cong \bigoplus_{i=1}^4\begin{pmatrix}
1 & 0 \\ 0 & -1 \end{pmatrix}.
\end{align*}
\end{example}

\section{The Scheja--Storch bilinear form}\label{sec:Scheja-Storch}

Associated to any polynomial with an isolated zero, Eisenbud and Levine~\cite{EisenbudLevine} and Khimshiashvili~\cite{Khim} used the Scheja--Storch construction \cite{SchejaStorch} to produce a bilinear form on the local algebra $Q_\mathfrak{m}$. Kass and Wickelgren proved that this Eisenbud--Khimshiashvili--Levine bilinear form computes the local $\A^1$-degree \cite{KW-EKL}. The machinery of Scheja and Storch works in great generality; in particular, one may produce a Scheja--Storch bilinear form on the global algebra $Q$ as well as the local algebras $Q_\mathfrak{m}$. We will provide a brief account of the Scheja--Storch construction before comparing it with the B\'ezoutian.

In \cite{SchejaStorch}, $k\langle X\rangle:=k \left\langle X_1, \ldots, X_n \right\rangle$ denotes either a polynomial ring $k \left[ X_1, \ldots, X_n \right]$ or a power series ring $k \left[ \left[ X_1, \ldots, X_n \right] \right]$. We will also use this notation, although we will focus on the situation where $k\langle X\rangle$ is a polynomial ring. Let $\rho: k \left\langle X \right\rangle \to Q$ denote the map obtained by quotienting out by the ideal $\left( f_1, \ldots, f_n \right)$, let $\mu_1 : k \left\langle X \right\rangle \otimes_k k\left\langle X \right\rangle \to k \left\langle X \right\rangle$ denote the multiplication map, and let $\mu : Q \otimes_k Q \to Q$ denote the multiplication map on the global algebra, fitting into a commutative diagram
\[ \begin{tikzcd}
    k \left\langle X \right\rangle\otimes_k k \left\langle X \right\rangle\rar["\mu_1"]\dar["\rho \otimes \rho" left] & k \left\langle X \right\rangle\dar["\rho" right]\\
    Q \otimes_k Q\rar["\mu" below] & Q.
\end{tikzcd} \]
We remark that $f_j \otimes 1 - 1 \otimes f_j$ lies in $\ker(\mu_1)$, and that $\ker(\mu_1)$ is generated by elements of the form $X_i \otimes 1 - 1 \otimes X_i$. Thus for any $j$, there are elements $a_{ij}\in k \left\langle X \right\rangle \otimes_k k \left\langle X \right\rangle$ such that
\begin{equation}\label{eqn:aij}
\begin{aligned}
    f_j \otimes 1 - 1 \otimes f_j &= \sum_{i=1}^n a_{ij} \left( X_i \otimes 1 - 1 \otimes X_i \right).
\end{aligned}
\end{equation}
We denote by $\Delta$ the following distinguished element in the tensor algebra $Q \otimes_k Q$
\[\Delta := \left( \rho \otimes \rho \right) \left( \det (a_{ij}) \right),\]
which corresponds to the B\'ezoutian which we will later demonstrate. It is true that $\Delta$ is independent of the choice of $a_{ij}$, as shown by Scheja and Storch \cite[3.1 Satz]{SchejaStorch}. We now define an important isomorphism $\chi$ of $k$-algebras used in the Scheja--Storch construction. However, we will phrase this more categorically than in \cite{SchejaStorch}, as it will benefit us later.

\begin{proposition}\label{prop:naturality-chi-SS}
Consider two endofunctors $F,G:\Alg^\fg_k \to \Alg^\fg_k$ on the category of finitely generated $k$-algebras, where $F(A)=A \otimes_k A$ and $G(A)=\Hom_k \left( \Hom_k \left( A,k \right),A  \right)$. Then there is a natural isomorphism $\chi:F\to G$ whose component at a $k$-algebra $A$ is
\begin{align*}
    \chi_A : A \otimes_k A &\to \Hom_k \left( \Hom_k \left( A,k \right),A \right) \\
    b \otimes c &\mapsto \left[ \varphi \mapsto \varphi(b)c \right].
\end{align*}
\end{proposition}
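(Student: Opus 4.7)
The plan is to verify the three standard requirements for a natural isomorphism in turn: (i) that $\chi_A$ is a well-defined $k$-linear map, (ii) that each $\chi_A$ is an isomorphism, and (iii) that the family $\{\chi_A\}_A$ intertwines $F(\psi)$ and $G(\psi)$ for every morphism $\psi : A \to B$ in $\Alg^\fg_k$. None of these steps requires the elaborate commutative algebra machinery of the earlier sections; the statement is essentially a reformulation of finite-dimensional linear algebra.

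For (i), the assignment $(b,c) \mapsto [\varphi \mapsto \varphi(b) c]$ is visibly $k$-bilinear in $b$ and $c$, so by the universal property of the tensor product it descends to a $k$-linear map $\chi_A : A \otimes_k A \to \Hom_k(\Hom_k(A,k),A)$. For (ii), I would invoke the classical natural isomorphism $V \otimes_k W \xrightarrow{\cong} \Hom_k(V^*,W)$ valid whenever $V$ is finite-dimensional, given by $v \otimes w \mapsto [\varphi \mapsto \varphi(v) w]$. Specializing to $V = W = A$ reproduces $\chi_A$ exactly, so bijectivity follows. The hypothesis is satisfied in our setting because the algebras of interest (the complete intersections $Q$ and their localizations) are finite-dimensional over $k$, so reading $\Alg^\fg_k$ as the subcategory of finite $k$-algebras poses no issue.

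For (iii), given $\psi : A \to B$, I would trace a simple tensor $b \otimes c$ around the naturality square
\[\begin{tikzcd}
A \otimes_k A \ar[r,"\chi_A"] \ar[d,"\psi \otimes \psi"'] & \Hom_k(\Hom_k(A,k),A) \ar[d,"G(\psi)"] \\
B \otimes_k B \ar[r,"\chi_B"'] & \Hom_k(\Hom_k(B,k),B)
\end{tikzcd}\]
and evaluate at an arbitrary $\varphi \in \Hom_k(B,k)$. Using that $G(\psi)(\alpha) = \psi \circ \alpha \circ (- \circ \psi)$, the top-right path produces $\psi\bigl((\varphi \circ \psi)(b)\,c\bigr) = \varphi(\psi(b))\,\psi(c)$, while the bottom-left path produces $\chi_B(\psi(b) \otimes \psi(c))(\varphi) = \varphi(\psi(b))\,\psi(c)$. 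The two agree, so naturality holds.

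The only real subtlety, and the one I expect to dwell on slightly, is the $k$-algebra structure on $G(A)$: there is no obvious intrinsic algebra structure making $G$ visibly land in $\Alg^\fg_k$, so one must take $G(A)$ to be equipped with the algebra structure transported from $A \otimes_k A$ via $\chi_A$. Once this convention is fixed, $\chi_A$ is tautologically a $k$-algebra isomorphism, and compatibility with $G(\psi)$ follows from the fact that $F(\psi) = \psi \otimes \psi$ is already an algebra map. For the application to the Scheja--Storch construction, only the underlying $k$-linear isomorphism is used, so this mild abuse is harmless.
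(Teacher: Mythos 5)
Your proof takes the same core approach as the paper's: the paper also cites the existence of the isomorphism $\chi_A$ to Scheja--Storch (their p.~181) and then devotes the entire proof to the naturality square, tracing $b \otimes c$ around both paths and checking the two composites agree on an arbitrary $\epsilon \in \Hom_k(B,k)$ — exactly the calculation you carry out. The substance is identical.

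Where you go beyond the paper is in making explicit two points the paper leaves tacit, and both are worth flagging. First, you correctly observe that $\chi_A$ is a bijection only when $A$ is finite-\emph{dimensional} over $k$, not merely finitely generated as a $k$-algebra; for, say, $A = k[x]$, the map $A \otimes_k A \to \Hom_k(\Hom_k(A,k),A)$ fails to be surjective. The paper's phrasing ``finitely generated $k$-algebras'' should really be read as ``finite $k$-algebras'' (finite as $k$-modules), and this is indeed harmless in context since the rings $Q$ and $Q_\mathfrak{m}$ that the isomorphism is applied to are Artinian and hence finite-dimensional. Second, you note that $G(A) = \Hom_k(\Hom_k(A,k),A)$ has no intrinsic $k$-algebra structure, so the claim that $G$ lands in $\Alg^\fg_k$ requires either transporting the algebra structure along $\chi_A$ (making the algebra-isomorphism claim tautological) or, more honestly, reading the proposition as asserting a natural isomorphism of functors valued in $k$-vector spaces — and only the latter is used downstream. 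The paper elides both points; your proof is correct and, if anything, more careful.
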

\begin{proof} This canonical isomorphism is given in \cite[p.181]{SchejaStorch}, so it will suffice for us to verify naturality. Let $g: A \to B$ be any morphism of $k$-algebras. Consider the induced maps $g \otimes g : A \otimes_k A \to B \otimes_k B$ and
\begin{align*}
    g_\ast : \Hom_k \left( \Hom_k(A,k),A \right) &\to \Hom_k \left( \Hom_k \left( B,k \right),B \right) \\
    \psi &\mapsto \left[ \epsilon \mapsto g\circ \psi(\epsilon\circ g) \right].
\end{align*}
It remains to show that the following diagram commutes.
\[ \begin{tikzcd}
    A \otimes_k A \rar["\chi_A" above]\dar["g \otimes g" left] & \Hom_k \left( \Hom_k \left( A,k \right) ,A\right)\dar["g_\ast" right]\\
    B \otimes_k B\rar["\chi_B" below] & \Hom_k \left( \Hom_k \left( B,k \right),B \right)
\end{tikzcd}\]
To see this, we compute $g_*\circ\chi_A=\left[b\otimes c\mapsto[\epsilon\mapsto g((\epsilon\circ g)(b)\cdot c)]\right]$. Note that $\epsilon\circ g:B\to k$, so $(\epsilon\circ g)(b)\in k$. Since $g$ is $k$-linear, we have $g((\epsilon\circ g)(b)\cdot c)=\epsilon(g(b))\cdot g(c)$. Next, we compute $\chi_B\circ(g\otimes g)=\left[b\otimes c\mapsto[\epsilon\mapsto\epsilon(g(b))\cdot g(c)]\right]$. Thus $g_*\circ\chi_A=\chi_B\circ(g\otimes g)$, so the diagram commutes.
\end{proof}

We now let $\Theta := \chi_Q(\Delta)$ denote the image of $\Delta$ under the component of this natural isomorphism at the global algebra $Q$. We have that $\Theta$ is a $k$-linear map $\Theta : \Hom_k(Q,k) \to Q$. Letting $\eta$ denote $\Theta^{-1}(1)$, we obtain a well-defined linear form $\eta: Q \to k$ by~\cite[3.3 Satz]{SchejaStorch}.

\begin{definition} We refer to $\Phi_\eta : Q \times Q \to k$ as the \textit{global Scheja--Storch bilinear form}.
\end{definition}

The B\'ezoutian gives us an explicit formula for $\Delta$. As a result, the global Scheja--Storch form agrees with the B\'ezoutian form.

\begin{proposition}\label{prop:delta-equals-Bezoutian}
In $Q\otimes_k Q$, we have $\Delta=\Bez(f_1,\ldots,f_n)$.
\end{proposition}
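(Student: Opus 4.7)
The plan is to exhibit an explicit choice of the coefficients $a_{ij}$ from equation~\eqref{eqn:aij} whose determinant visibly recovers $\det(\Delta_{ij})$. Since Scheja--Storch showed that $\Delta = (\rho \otimes \rho)(\det(a_{ij}))$ is independent of the choice of $a_{ij}$, it is enough to produce one valid system of $a_{ij}$'s for which the computation is transparent.

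First, I would identify $k\langle X\rangle \otimes_k k\langle X\rangle$ with the polynomial ring $k[X,Y]$ by sending $X_i \otimes 1 \mapsto X_i$ and $1 \otimes X_i \mapsto Y_i$, so that $\rho \otimes \rho$ factors through $\varepsilon$ (modulo the ideal $(f(X),f(Y))$). Under this identification, equation~\eqref{eqn:aij} becomes the requirement
\[ f_j(X) - f_j(Y) = \sum_{i=1}^n a_{ij}(X,Y)(X_i - Y_i). \]
The key observation is a telescoping identity: for each $j$,
\begin{align*}
f_j(X) - f_j(Y) &= \sum_{i=1}^n \bigl[f_j(Y_1,\ldots,Y_{i-1},X_i,\ldots,X_n) - f_j(Y_1,\ldots,Y_i,X_{i+1},\ldots,X_n)\bigr] \\
&= \sum_{i=1}^n \Delta_{ji}(X,Y)(X_i - Y_i),
\end{align*}
where each summand is divisible by $X_i - Y_i$ precisely because two consecutive terms differ only in the $i$-th slot. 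This shows that we may legitimately choose $a_{ij} := \Delta_{ji}$.

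With this choice, the matrix $(a_{ij})$ is the transpose of $(\Delta_{ij})$, so $\det(a_{ij}) = \det(\Delta_{ij})$. Applying $\rho \otimes \rho$ (equivalently $\varepsilon$ followed by the quotient) therefore gives
\[ \Delta = (\rho \otimes \rho)(\det(a_{ij})) = \varepsilon(\det(\Delta_{ij})) = \Bez(f_1,\ldots,f_n), \]
by Definition~\ref{def:Bezoutian}. The only real bookkeeping to be careful about is the transposition between $(a_{ij})$ and $(\Delta_{ij})$ and the compatibility between $\varepsilon$ and $\rho \otimes \rho$ under the identification above; neither obstructs the argument, since determinants are invariant under transpose and the two ring maps agree by construction on the generators $X_i$ and $Y_i$.
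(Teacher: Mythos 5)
Your proposal is correct and follows essentially the same route as the paper: both use the telescoping identity to exhibit $a_{ij} = \Delta_{ji}$ (transported across the isomorphism between $k\langle X\rangle\otimes_k k\langle X\rangle$ and $k[X,Y]$) as a valid choice in equation~\eqref{eqn:aij}, then invoke the Scheja--Storch independence result and the transpose-invariance of the determinant. The only cosmetic difference is that the paper names the identification $\varphi$ explicitly and writes $a_{ij} = \varphi^{-1}(\Delta_{ji})$, whereas you work directly in $k[X,Y]$; this is merely notational.
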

\begin{proof}
We first compute
\begin{align*}
    \sum_{i=1}^n \Delta_{ji} \left( X_i - Y_i \right) &= \sum_{i=1}^n \frac{f_j(Y_1, \ldots, Y_{i-1},X_i, \ldots, X_n) - f_j(Y_1, \ldots, Y_i,X_{i+1}, \ldots, X_n)}{(X_i - Y_i)}\cdot(X_i - Y_i) \\
    &= \sum_{i=1}^n f_j(Y_1, \ldots, Y_{i-1},X_i, \ldots, X_n) - f_j(Y_1, \ldots, Y_i,X_{i+1}, \ldots, X_n) \\
    &= f_j(X_1, \ldots, X_n) - f_j(Y_1, \ldots, Y_n).
\end{align*}

Let $\varphi:k\langle X\rangle\otimes_k k\langle X\rangle\xto{\sim} k\langle X,Y\rangle$ be the ring isomorphism given by $\varphi(b\otimes c)=b(X)c(Y)$. The inverse of $\varphi$ is characterized by $\varphi^{-1}(X_i)=x_i\otimes 1$ and $\varphi^{-1}(Y_i)=1\otimes x_i$. It follows that
\begin{align*}
    f_j\otimes 1-1\otimes f_j&=\varphi^{-1}(f_j(X)-f_j(Y)) =\sum_{i=1}^n\varphi^{-1}(\Delta_{ji}(X_i-Y_i))\\
    &=\sum_{i=1}^n\varphi^{-1}(\Delta_{ji})(x_i\otimes 1-1\otimes x_i).
\end{align*}
We may thus set $a_{ij}=\varphi^{-1}(\Delta_{ji})$, and~\cite[3.1 Satz]{SchejaStorch} implies that $\Delta=(\rho\otimes\rho)(\det(a_{ij}))$. On the other hand, $(\rho\otimes\rho)(\varphi^{-1}(\det(\Delta_{ji})))=\Bez(f_1,\ldots,f_n)$ by Definition~\ref{def:Bezoutian}.
\end{proof}

\begin{lemma}\label{lem:Bezoutian-equals-SS} The B\'{e}zoutian bilinear form and the global Scheja--Storch bilinear form are identical.
\end{lemma}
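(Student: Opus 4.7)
The plan is to identify the defining linear forms of the two bilinear forms and appeal to the uniqueness established earlier. Fix a basis $\{a_i\}$ of $Q$, and write $\Bez(f_1,\ldots,f_n)=\sum_{i=1}^m a_i\otimes b_i$, where $\{b_i\}$ is a basis of $Q$ by Lemma~\ref{lemma:bezoutian-2-bases}. Let $\lambda$ be the dualizing linear form from Corollary~\ref{cor:existence-dualizing-linear-form} with respect to which $\{a_i\}$ and $\{b_i\}$ are dual; by Definition~\ref{def:Bezoutian-bilinear-form}, $\Phi_\lambda$ is the B\'ezoutian bilinear form. I want to show that $\eta=\lambda$, where $\eta=\Theta^{-1}(1)$ is the Scheja--Storch linear form.

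First I would use Proposition~\ref{prop:delta-equals-Bezoutian} to replace $\Delta$ with $\Bez(f_1,\ldots,f_n)$, so that
\[\Theta=\chi_Q(\Delta)=\chi_Q\!\left(\sum_{i=1}^m a_i\otimes b_i\right)=\Bigl[\varphi\mapsto \sum_{i=1}^m \varphi(a_i)\,b_i\Bigr].\]
But this is precisely the map $\Theta$ considered in Proposition~\ref{prop:equivalent-conditions-bases-dual} for the bases $\{a_i\}$ and $\{b_i\}$. Hence the condition $\Theta(\eta)=1$ defining $\eta$ is exactly condition~(1) of Proposition~\ref{prop:equivalent-conditions-bases-dual}, which is equivalent to condition~(3): the bases $\{a_i\}$ and $\{b_i\}$ are dual with respect to $\eta$.

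Now $\lambda$ also makes $\{a_i\}$ and $\{b_i\}$ dual by construction. Since Corollary~\ref{cor:existence-dualizing-linear-form} asserts that the dualizing linear form for a given pair of dual bases is unique, we conclude $\eta=\lambda$. Consequently $\Phi_\eta=\Phi_\lambda$, so the two bilinear forms are literally equal, not merely isomorphic as classes in $\GW(k)$.

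The only subtle point is making sure that the isomorphism $\chi_Q$ of Proposition~\ref{prop:naturality-chi-SS} really does produce the same map $\Theta$ as in Proposition~\ref{prop:equivalent-conditions-bases-dual}; this is just unwinding the definition $b\otimes c\mapsto[\varphi\mapsto\varphi(b)c]$ applied to $\sum a_i\otimes b_i$, so no obstacle arises. The crux of the argument is the combination of Proposition~\ref{prop:delta-equals-Bezoutian} with the uniqueness statement in Corollary~\ref{cor:existence-dualizing-linear-form}.
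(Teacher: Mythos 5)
Your proof is correct and follows essentially the same route as the paper's: replace $\Delta$ by the B\'ezoutian via Proposition~\ref{prop:delta-equals-Bezoutian}, compute $\chi_Q(\Delta)$ explicitly, and use Proposition~\ref{prop:equivalent-conditions-bases-dual} to identify $\eta$ with the dualizing form for $\{a_i\}$ and $\{b_i\}$. The only cosmetic difference is that you invoke the uniqueness statement of Corollary~\ref{cor:existence-dualizing-linear-form} explicitly to conclude $\eta=\lambda$, whereas the paper reaches the same conclusion by directly appealing to Definition~\ref{def:Bezoutian-bilinear-form}.
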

\begin{proof} 
We showed in Proposition~\ref{prop:delta-equals-Bezoutian} that $\Delta$ is the B\'{e}zoutian in $Q \otimes_k Q$. We now show that the associated forms are identical. Pick bases $\{a_i\}$ and $\{b_i\}$ of $Q$ such that
\begin{align*}
    \Delta = \Bez(f_1, \ldots, f_n) = \sum_{i=1}^m a_i \otimes b_i.
\end{align*}
Since the natural isomorphism $\chi$ has $k$-linear components, $\Delta$ is mapped to
\begin{align*}
    \Theta := \chi_Q(\Delta) = \left[ \varphi \mapsto \sum_{i=1}^m \varphi(a_i)b_i \right].
\end{align*}

Thus $\eta:=\Theta^{-1}(1)$ is the linear form $\eta : Q \to k$ satisfying $\sum_{i=1}^m \eta(a_i)b_i = 1$. By Proposition~\ref{prop:equivalent-conditions-bases-dual}, this implies that $\eta$ is the form for which $\{a_i\}$ and $\{b_i\}$ are dual bases. As in Definition~\ref{def:Bezoutian-bilinear-form}, this tells us that $\eta$ is the linear form producing the B\'ezoutian bilinear form.
\end{proof}

\subsection{Local decomposition}\label{sec:local-to-global}

While our discussion of the Scheja--Storch form in the previous section was global, it is perfectly valid to localize at a maximal ideal and repeat the story again \cite[p.180--181]{SchejaStorch}. The fact that $Q$ is an Artinian ring then gives a convenient way to relate the global version of $\eta$ to the local version of $\eta$. This local decomposition has been utilized previously, for example in~\cite{KW-EKL}.

Let $\mathfrak{m}$ be a maximal ideal in $k[x_1, \ldots, x_n]$ at which the morphism $f = (f_1, \ldots, f_n)$ has an isolated root. Letting $\rho_\mathfrak{m}$ denote the quotient map $k \left\langle X \right\rangle_\mathfrak{m} \to Q_\mathfrak{m}$, we have a commutative diagram
\[ \begin{tikzcd}
    k \left\langle X \right\rangle_\mathfrak{m} \otimes_k k \left\langle X \right\rangle_\mathfrak{m}\dar["\rho_\mathfrak{m} \otimes \rho_\mathfrak{m}" left]\rar["\mu_1'"] & k \left\langle X \right\rangle_\mathfrak{m}\dar["\rho_\mathfrak{m}" right]\\
    Q_\mathfrak{m} \otimes_k Q_\mathfrak{m}\rar["\mu'" below] & Q_\mathfrak{m}.
\end{tikzcd} \]

In $k\langle X\rangle_\mathfrak{m}\otimes_k k\langle X\rangle_\mathfrak{m}$, we can again write
\[f_j\otimes 1-1\otimes f_j=\sum_{i=1}^n\tilde{a}_{ij}(X_i\otimes 1-1\otimes X_i)\]
to obtain the local B\'ezoutian $\Delta_\mathfrak{m}:=(\rho_\mathfrak{m}\otimes\rho_\mathfrak{m})(\det(\tilde{a}_{ij})) \in Q_\mathfrak{m} \otimes_k Q_\mathfrak{m}$. Let $\lambda_\mathfrak{m}:Q\to Q_\mathfrak{m}$ be the localization map. From \cite[p.181]{SchejaStorch} we have $\left( \lambda_\mathfrak{m} \otimes \lambda_\mathfrak{m} \right) \left( \Delta \right) = \Delta_\mathfrak{m}$. Via the natural isomorphism $\chi$ in Proposition~\ref{prop:naturality-chi-SS}, we have a commutative diagram of the form
\[ \begin{tikzcd}
    Q \otimes_k Q\dar["\lambda_\mathfrak{m} \otimes \lambda_\mathfrak{m}" left]\rar["\chi_Q" above] & \Hom_k \left( \Hom_k(Q,k),Q \right)\dar["\lambda_{\mathfrak{m}*}"]\\
    Q_\mathfrak{m} \otimes_k Q_\mathfrak{m}\rar["\chi_{Q_\mathfrak{m}}" below] & \Hom_k \left( \Hom_k(Q_\mathfrak{m},k),Q_\mathfrak{m} \right).
\end{tikzcd}\]
Tracing $\Delta$ through this diagram, we see that
\[\begin{tikzcd}[row sep=large]
    \Delta\dar[maps to]\rar[maps to] & \Theta\dar[maps to]\\
    \Delta_\mathfrak{m}\rar[maps to] & \Theta_\mathfrak{m},
\end{tikzcd} \]
where $\Theta_\mathfrak{m} = \chi_{Q_\mathfrak{m}}(\Delta_\mathfrak{m})$. Unwinding $\Theta_\mathfrak{m}=\lambda_{\mathfrak{m}*}(\Theta)$, we find that $\Theta_\mathfrak{m}$ is the map
\begin{align*}
    \Theta_\mathfrak{m} : \Hom_k(Q_\mathfrak{m},k) &\to Q_\mathfrak{m} \\
    \psi &\mapsto \lambda_\mathfrak{m} \circ \Theta \left( \psi \circ \lambda_\mathfrak{m} \right).
\end{align*}

Recall that as $Q$ is a zero-dimensional Noetherian commutative $k$-algebra, the localization maps induce a $k$-algebra isomorphism\footnote{$Q$ is Artinian by~\cite[\href{https://stacks.math.columbia.edu/tag/00KH}{Lemma 00KH}]{stacks}, so the claimed isomorphism exists by~\cite[\href{https://stacks.math.columbia.edu/tag/00JA}{Lemma 00JA}]{stacks}.}
\begin{align*}
    \left( \lambda_\mathfrak{m} \right)_\mathfrak{m} : Q \xto{\sim} \prod_\mathfrak{m} Q_\mathfrak{m}.
\end{align*}
This is reflected by an internal decomposition of $Q$ in terms of orthogonal idempotents~\cite[2.13]{BCRS}, which we now describe (see also~\cite[\href{https://stacks.math.columbia.edu/tag/00JA}{Lemma 00JA}]{stacks}). By the Chinese remainder theorem, we may pick a collection of pairwise orthogonal idempotents $\{e_\mathfrak{m}\}_\mathfrak{m}$ such that $\sum_\mathfrak{m}e_\mathfrak{m}=1$. The internal decomposition of $Q$ is then
\[Q=\bigoplus_\mathfrak{m}Q\cdot e_\mathfrak{m},\]
and the localization maps restrict to isomorphisms $\left.\lambda_\mathfrak{m} \right|_{Q\cdot e_\mathfrak{m}}:Q\cdot e_\mathfrak{m}\xto{\sim} Q_\mathfrak{m}$ with $\lambda_\mathfrak{m}(e_\mathfrak{m})=1$. Moreover, $\lambda_\mathfrak{m}(Q\cdot e_\mathfrak{n})=0$ for any $\mathfrak{n}\neq\mathfrak{m}$.

\begin{proposition}\label{prop:theta-of-linear-form-factoring-lands-in-Qem} Suppose $\ell : Q \to k$ is a linear form which factors through the localization $\lambda_\mathfrak{m}:Q \to Q_\mathfrak{m}$ for some maximal ideal $\mathfrak{m}$. Then $\Theta(\ell)$ lies in $Q\cdot e_\mathfrak{m}$.
\end{proposition}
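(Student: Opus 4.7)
The plan is to exploit that $\Theta = \chi_Q(\Delta)$ is a $Q$-module isomorphism, together with the internal decomposition $1 = \sum_{\mathfrak{n}} e_{\mathfrak{n}}$ by pairwise orthogonal idempotents. Recall (as cited in the proof of Proposition~\ref{prop:equivalent-conditions-bases-dual}, via \cite[3.3 Satz]{SchejaStorch}) that $\Theta$ is $Q$-linear with respect to the $Q$-module structure $(a \cdot \varphi)(b) := \varphi(ab)$ on $\Hom_k(Q,k)$. Writing $\Theta(\ell) = 1 \cdot \Theta(\ell) = \sum_{\mathfrak{n}} e_{\mathfrak{n}} \cdot \Theta(\ell) = \sum_{\mathfrak{n}} \Theta(e_{\mathfrak{n}} \cdot \ell)$, it suffices to show that $e_{\mathfrak{n}} \cdot \ell = 0$ for every $\mathfrak{n} \neq \mathfrak{m}$, since then $\Theta(\ell) = \Theta(e_{\mathfrak{m}} \cdot \ell) = e_{\mathfrak{m}} \cdot \Theta(\ell) \in Q \cdot e_{\mathfrak{m}}$.

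To establish this vanishing, I would use the factorization $\ell = \bar\ell \circ \lambda_{\mathfrak{m}}$ for some $\bar\ell : Q_{\mathfrak{m}} \to k$. For any $a \in Q$ and any $\mathfrak{n} \neq \mathfrak{m}$,
\[
(e_{\mathfrak{n}} \cdot \ell)(a) \;=\; \ell(e_{\mathfrak{n}} a) \;=\; \bar\ell\bigl(\lambda_{\mathfrak{m}}(e_{\mathfrak{n}}) \, \lambda_{\mathfrak{m}}(a)\bigr).
\]
But $e_{\mathfrak{n}} \in Q \cdot e_{\mathfrak{n}}$, and the decomposition $Q = \bigoplus_{\mathfrak{n}'} Q \cdot e_{\mathfrak{n}'}$ recorded just before the statement shows that $\lambda_{\mathfrak{m}}(Q \cdot e_{\mathfrak{n}}) = 0$ whenever $\mathfrak{n} \neq \mathfrak{m}$. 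Hence $\lambda_{\mathfrak{m}}(e_{\mathfrak{n}}) = 0$, and $(e_{\mathfrak{n}} \cdot \ell)(a) = 0$ for every $a$, so $e_{\mathfrak{n}} \cdot \ell = 0$.

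There is no real obstacle here; the only subtlety is making sure to invoke $Q$-linearity of $\Theta$ (not merely $k$-linearity) and to identify how the orthogonal idempotents interact with localization. Both ingredients are already stated in the text directly preceding the proposition, so the proof is essentially a short diagram-chase once the $Q$-module structure on $\Hom_k(Q,k)$ is in hand.
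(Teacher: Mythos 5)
Your proof is correct and rests on exactly the same two ingredients as the paper's: the $Q$-linearity of $\Theta$ coming from \cite[3.3 Satz]{SchejaStorch}, and the fact that $\lambda_\mathfrak{m}$ annihilates $Q\cdot e_\mathfrak{n}$ for $\mathfrak{n}\neq\mathfrak{m}$. The paper packages the argument by directly factoring $\lambda_\mathfrak{m}$ (and hence $\ell$) through multiplication by $e_\mathfrak{m}$, yielding $\ell=e_\mathfrak{m}\cdot(\ell_\mathfrak{m}\circ\lambda_\mathfrak{m})$ in one step, whereas you expand $\Theta(\ell)=\sum_\mathfrak{n}\Theta(e_\mathfrak{n}\cdot\ell)$ and show the complementary terms vanish; these are two presentations of the same computation, since $\ell=\sum_\mathfrak{n}e_\mathfrak{n}\cdot\ell=e_\mathfrak{m}\cdot\ell$ is precisely the identity the paper's factorization encodes.
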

\begin{proof} 
Recall that $\left.\lambda_\mathfrak{m}\right|_{Q\cdot e_\mathfrak{n}}=0$ for $\mathfrak{n}\neq\mathfrak{m}$. Since $e_\mathfrak{m}\cdot e_\mathfrak{n}=0$ for $\mathfrak{n}\neq\mathfrak{m}$ and $e_\mathfrak{m}$ is idempotent, the localization $\lambda_\mathfrak{m} : Q \to Q_\mathfrak{m}$ can be written as the following composition:
\begin{align*}
    \lambda_\mathfrak{m}:Q \xto{-\cdot e_\mathfrak{m}} Q \xto{\lambda_\mathfrak{m}} Q_\mathfrak{m}.
\end{align*}
Since $\ell$ factors through the localization, it can be written as a composite
\begin{align*}
    \ell:Q \xto{-\cdot e_\mathfrak{m}} Q \xto{\lambda_\mathfrak{m}} Q_\mathfrak{m} \xto{\ell_\mathfrak{m}} k.
\end{align*}
Thus $\Theta(\ell) = \Theta(\ell_\mathfrak{m} \circ  \lambda_\mathfrak{m} \circ \left( e_\mathfrak{m}\cdot- \right))$. Scheja--Storch proved that $\Theta$ respects the $Q$-module structure on $\Hom_k(Q,k)$ given by $a\cdot\sigma=\sigma(a\cdot -)$~\cite[3.3 Satz]{SchejaStorch}. That is, $\Theta(\sigma(a\cdot -))=\Theta(a\cdot\sigma)=a\Theta(\sigma)$ for any $a\in Q$ and $\sigma\in\Hom_k(Q,k)$. Thus
\begin{align*}
    \Theta(\ell) = e_\mathfrak{m} \cdot \Theta \left( \ell_\mathfrak{m} \circ  \lambda_\mathfrak{m}\right),
\end{align*}
so $\Theta(\ell)\in Q\cdot e_\mathfrak{m}$.
\end{proof}

Returning to the Scheja--Storch form, we have the following commutative diagram relating $\Theta_\mathfrak{m}$ and $\Theta$:
\[ \begin{tikzcd}
    \Hom_k(Q,k)\rar["\Theta"] & Q\dar["\lambda_\mathfrak{m}" right]\\
    \Hom_k(Q_\mathfrak{m},k)\uar["-\circ \lambda_\mathfrak{m}" left]\rar["\Theta_\mathfrak{m}" below] & Q_\mathfrak{m}.
\end{tikzcd} \]
This coherence between $\Theta$ and $\Theta_\mathfrak{m}$ allows us to relate the local linear forms $\eta_\mathfrak{m}:=\Theta_\mathfrak{m}^{-1}(1)$ to the global linear form $\eta:=\Theta^{-1}(1)$ in the following way.

\begin{prop}\label{prop:local-to-global-eta}
For each maximal ideal $\mathfrak{m}$ of $Q$, let $\eta_\mathfrak{m}:=\Theta_\mathfrak{m}^{-1}(1):Q_\mathfrak{m}\to k$, and let $\eta:=\Theta^{-1}(1):Q\to k$. Then $\eta=\sum_\mathfrak{m}\eta_\mathfrak{m}\circ\lambda_\mathfrak{m}$.
\end{prop}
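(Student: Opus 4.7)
The plan is to show the two sides agree by applying $\Theta$ and using the uniqueness of the preimage of $1$. Define $\tilde\eta := \sum_\mathfrak{m}\eta_\mathfrak{m}\circ\lambda_\mathfrak{m} : Q \to k$. Since $\Theta$ is a $k$-linear isomorphism and $\eta = \Theta^{-1}(1)$, it suffices to prove $\Theta(\tilde\eta) = 1$, which by linearity reduces to computing $\Theta(\eta_\mathfrak{m}\circ\lambda_\mathfrak{m})$ for each maximal ideal $\mathfrak{m}$ and summing.

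For each individual $\mathfrak{m}$, I would argue as follows. The form $\eta_\mathfrak{m}\circ\lambda_\mathfrak{m}$ factors through the localization $\lambda_\mathfrak{m}$ by construction, so Proposition~\ref{prop:theta-of-linear-form-factoring-lands-in-Qem} gives $\Theta(\eta_\mathfrak{m}\circ\lambda_\mathfrak{m})\in Q\cdot e_\mathfrak{m}$. On the other hand, the commutative square relating $\Theta$ and $\Theta_\mathfrak{m}$ via $(-\circ\lambda_\mathfrak{m})$ and $\lambda_\mathfrak{m}$ yields
\[
\lambda_\mathfrak{m}\bigl(\Theta(\eta_\mathfrak{m}\circ\lambda_\mathfrak{m})\bigr)=\Theta_\mathfrak{m}(\eta_\mathfrak{m})=1.
\]
Because the restriction $\lambda_\mathfrak{m}|_{Q\cdot e_\mathfrak{m}} : Q\cdot e_\mathfrak{m}\xto{\sim}Q_\mathfrak{m}$ is an isomorphism sending $e_\mathfrak{m}$ to $1$, the element of $Q\cdot e_\mathfrak{m}$ mapping to $1$ is uniquely $e_\mathfrak{m}$; hence $\Theta(\eta_\mathfrak{m}\circ\lambda_\mathfrak{m})=e_\mathfrak{m}$.

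Summing over all maximal ideals and using the orthogonal idempotent decomposition $\sum_\mathfrak{m}e_\mathfrak{m}=1$ produced by the Chinese remainder theorem, we obtain
\[
\Theta(\tilde\eta)=\sum_\mathfrak{m}\Theta(\eta_\mathfrak{m}\circ\lambda_\mathfrak{m})=\sum_\mathfrak{m}e_\mathfrak{m}=1.
\]
Applying $\Theta^{-1}$ gives $\tilde\eta=\eta$, which is the claim.

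There is no real obstacle here once Proposition~\ref{prop:theta-of-linear-form-factoring-lands-in-Qem} and the commutative square relating $\Theta$ to $\Theta_\mathfrak{m}$ are in hand; the only subtlety to be careful about is ensuring the sum is finite, which holds because $Q$ is Artinian and hence has only finitely many maximal ideals, and that the idempotent decomposition used is the same one implicit in the isomorphism $Q\xrightarrow{\sim}\prod_\mathfrak{m}Q_\mathfrak{m}$. The step I would treat most carefully is the identification $\Theta(\eta_\mathfrak{m}\circ\lambda_\mathfrak{m})=e_\mathfrak{m}$, as it is the only place where both the "lands in $Q\cdot e_\mathfrak{m}$" statement and the commutative square are used simultaneously.
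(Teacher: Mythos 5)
Your proof is correct and follows essentially the same route as the paper: apply $\Theta$, reduce to $\Theta(\eta_\mathfrak{m}\circ\lambda_\mathfrak{m})=e_\mathfrak{m}$, and obtain this from Proposition~\ref{prop:theta-of-linear-form-factoring-lands-in-Qem} together with the commutative square relating $\Theta$ and $\Theta_\mathfrak{m}$ and the fact that $\lambda_\mathfrak{m}|_{Q\cdot e_\mathfrak{m}}$ is an isomorphism sending $e_\mathfrak{m}$ to $1$. The additional remarks about finiteness of the maximal ideals and consistency of the idempotent decomposition are sensible but not new in substance.
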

\begin{proof}
It suffices to show that $\Theta(\sum_\mathfrak{m}\eta_\mathfrak{m}\circ\lambda_\mathfrak{m})=1$. Since $\eta_\mathfrak{m}=\Theta^{-1}_\mathfrak{m}(1)$ by definition, we have $1=\Theta_\mathfrak{m}(\eta_\mathfrak{m}):=\lambda_\mathfrak{m}(\Theta(\eta_\mathfrak{m}\circ\lambda_\mathfrak{m}))$. By Proposition~\ref{prop:theta-of-linear-form-factoring-lands-in-Qem}, we have $\Theta(\eta_\mathfrak{m}\circ \lambda_\mathfrak{m})\in Q\cdot e_\mathfrak{m}$. Since $\lambda_\mathfrak{m}(\Theta(\eta_\mathfrak{m}\circ\lambda_\mathfrak{m})) = 1$ and $\lambda_\mathfrak{m}|_{Q\cdot e_\mathfrak{m}}$ is an isomorphism sending $e_\mathfrak{m}$ to 1, it follows that $\Theta(\eta_\mathfrak{m}\circ \lambda_\mathfrak{m}) = e_\mathfrak{m}$. Finally, since $\Theta$ is $k$-linear, we have
\begin{align*}
    \Theta\left(\sum_\mathfrak{m}\eta_\mathfrak{m}\circ\lambda_\mathfrak{m}\right)&=\sum_\mathfrak{m}\Theta(\eta_\mathfrak{m}\circ\lambda_\mathfrak{m})\\
    &=\sum_\mathfrak{m}e_\mathfrak{m}=1.\qedhere
\end{align*}
\end{proof}

Using this local decomposition procedure for the linear forms $\eta_\mathfrak{m}$ and $\eta$, we obtain a local decomposition for Scheja--Storch bilinear forms.

\begin{lemma}\label{lem:LGP-for-SS-forms} \textit{(Local decomposition of Scheja--Storch forms)} 
Let $\eta$ and $\eta_\mathfrak{m}$ be as in Proposition~\ref{prop:local-to-global-eta}. Then $\Phi_\eta = \bigoplus_\mathfrak{m} \Phi_{\eta_\mathfrak{m}}$. In particular, the global Scheja--Storch form is a sum over local Scheja--Storch forms
\begin{align*}
    \text{SS}(f) = \sum_\mathfrak{m} \text{SS}_\mathfrak{m}(f).
\end{align*}
\end{lemma}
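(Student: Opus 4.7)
The plan is to exploit the internal idempotent decomposition $Q=\bigoplus_\mathfrak{m} Q\cdot e_\mathfrak{m}$ already recorded in the excerpt, and to show that with respect to $\Phi_\eta$ the summands $Q\cdot e_\mathfrak{m}$ are pairwise orthogonal, while the restriction of $\Phi_\eta$ to each summand is isometric to $\Phi_{\eta_\mathfrak{m}}$ under the isomorphism $\lambda_\mathfrak{m}|_{Q\cdot e_\mathfrak{m}}\colon Q\cdot e_\mathfrak{m}\xto{\sim} Q_\mathfrak{m}$. Together, these two facts give the promised orthogonal decomposition.

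First I would dispense with pairwise orthogonality. If $a\in Q\cdot e_\mathfrak{m}$ and $b\in Q\cdot e_\mathfrak{n}$ with $\mathfrak{m}\neq\mathfrak{n}$, then $ab\in Q\cdot e_\mathfrak{m}e_\mathfrak{n}=0$ because the chosen idempotents are pairwise orthogonal, so certainly $\Phi_\eta(a,b)=\eta(ab)=0$. This lets me write, for general $a=\sum_\mathfrak{m} a_\mathfrak{m}$ and $b=\sum_\mathfrak{m} b_\mathfrak{m}$ with $a_\mathfrak{m},b_\mathfrak{m}\in Q\cdot e_\mathfrak{m}$, the identity $ab=\sum_\mathfrak{m} a_\mathfrak{m} b_\mathfrak{m}$.

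Next, for the restriction to a fixed $Q\cdot e_\mathfrak{m}$, I would apply Proposition~\ref{prop:local-to-global-eta} together with the fact that $\lambda_\mathfrak{n}$ annihilates $Q\cdot e_\mathfrak{m}$ for $\mathfrak{n}\neq\mathfrak{m}$. Explicitly, for $a_\mathfrak{m} b_\mathfrak{m}\in Q\cdot e_\mathfrak{m}$,
\begin{align*}
\eta(a_\mathfrak{m} b_\mathfrak{m})=\sum_\mathfrak{n}\eta_\mathfrak{n}(\lambda_\mathfrak{n}(a_\mathfrak{m} b_\mathfrak{m}))=\eta_\mathfrak{m}(\lambda_\mathfrak{m}(a_\mathfrak{m} b_\mathfrak{m}))=\eta_\mathfrak{m}(\lambda_\mathfrak{m}(a_\mathfrak{m})\lambda_\mathfrak{m}(b_\mathfrak{m})),
\end{align*}
using that $\lambda_\mathfrak{m}$ is a ring homomorphism in the last step. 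This is precisely $\Phi_{\eta_\mathfrak{m}}(\lambda_\mathfrak{m}(a_\mathfrak{m}),\lambda_\mathfrak{m}(b_\mathfrak{m}))$, so the $k$-linear isomorphism $\lambda_\mathfrak{m}|_{Q\cdot e_\mathfrak{m}}$ carries $\Phi_\eta|_{Q\cdot e_\mathfrak{m}\times Q\cdot e_\mathfrak{m}}$ to $\Phi_{\eta_\mathfrak{m}}$.

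Combining the previous two paragraphs yields
\begin{align*}
\Phi_\eta(a,b)=\sum_\mathfrak{m}\eta(a_\mathfrak{m} b_\mathfrak{m})=\sum_\mathfrak{m}\Phi_{\eta_\mathfrak{m}}(\lambda_\mathfrak{m}(a_\mathfrak{m}),\lambda_\mathfrak{m}(b_\mathfrak{m})),
\end{align*}
which exhibits $\Phi_\eta$ as the orthogonal direct sum $\bigoplus_\mathfrak{m}\Phi_{\eta_\mathfrak{m}}$. The resulting equality $\mathrm{SS}(f)=\sum_\mathfrak{m}\mathrm{SS}_\mathfrak{m}(f)$ in $\GW(k)$ is then immediate. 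The only step that requires any care is the middle one: tracking that the single surviving summand in $\eta=\sum_\mathfrak{n}\eta_\mathfrak{n}\circ\lambda_\mathfrak{n}$ when evaluated on an element of $Q\cdot e_\mathfrak{m}$ is exactly the one indexed by $\mathfrak{m}$, which is where the orthogonal idempotent structure does all the work.
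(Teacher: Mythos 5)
Your proof is correct and follows essentially the same route as the paper's: both arguments rest on the orthogonal idempotent decomposition $Q=\bigoplus_\mathfrak{m} Q\cdot e_\mathfrak{m}$ together with Proposition~\ref{prop:local-to-global-eta}, with the paper simply expressing the same computation in terms of an adapted basis and Gram matrices where you argue coordinate-free at the level of subspaces. The step you flag as needing care --- that only the $\mathfrak{n}=\mathfrak{m}$ summand of $\eta=\sum_\mathfrak{n}\eta_\mathfrak{n}\circ\lambda_\mathfrak{n}$ survives on $Q\cdot e_\mathfrak{m}$ --- is exactly where the paper invokes the same fact, so the two proofs match.
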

\begin{proof} 
For each maximal ideal $\mathfrak{m}$, let $\left\{ w_{\mathfrak{m},i} \right\}_i$ be a $k$-vector space basis for $Q_\mathfrak{m}$. Let $\left\{v_{\mathfrak{m},i}\right\}_{\mathfrak{m},i}$ (ranging over all $i$ and all maximal ideals) be a basis of $Q$ such that $\lambda_{\mathfrak{m}} \left( v_{\mathfrak{m},i} \right) = w_{\mathfrak{m},i}$ for each $i$ and $\mathfrak{m}$. We now compare the Gram matrix for $\eta : Q \to k$ and the Gram matrices for $\eta_\mathfrak{m} : Q_\mathfrak{m} \to k$ in these bases. Via the internal decomposition consisting of pairwise orthogonal idempotents, we have $v_{\mathfrak{m},i}\cdot v_{\mathfrak{n},j} = 0$ if $\mathfrak{m} \ne \mathfrak{n}$. Thus
\begin{align*}
    \eta(v_{\mathfrak{m},i}\cdot v_{\mathfrak{n},j}) = 0,
\end{align*}
so the Gram matrix for $\Phi_\eta$ will be a block sum indexed over the maximal ideals. If $\mathfrak{m} = \mathfrak{n}$, then Proposition~\ref{prop:local-to-global-eta} implies
\begin{align*}
    \eta(v_{\mathfrak{m},i} \cdot v_{\mathfrak{m},j}) &= \sum_\mathfrak{n} \eta_\mathfrak{n} \left( \lambda_\mathfrak{n} \left( v_{\mathfrak{m},i} \cdot v_{\mathfrak{m},j} \right) \right) = \eta_\mathfrak{m}(\lambda_\mathfrak{m}\left(v_{\mathfrak{m},i} \cdot v_{\mathfrak{m},j})\right)\\
    &= \eta_\mathfrak{m}\left(w_{\mathfrak{m},i} \cdot w_{\mathfrak{m},j}\right).
\end{align*}
Thus the Gram matrices of $\Phi_\eta$ and $\bigoplus_\mathfrak{m} \Phi_{\eta_\mathfrak{m}}$ are equal, so $\Phi_\eta=\bigoplus_\mathfrak{m}\Phi_{\eta_\mathfrak{m}}$.
\end{proof}

\begin{remark}\label{rem:local-version}
The local Scheja--Storch bilinear form is given by $\Phi_{\eta_\mathfrak{m}}:Q_\mathfrak{m}\times Q_\mathfrak{m}\to k$. Given a basis $\{a_1,\ldots,a_m\}$ of $Q_\mathfrak{m}$, we may write $\Delta_\mathfrak{m}=\sum a_i\otimes b_i$ and define the local B\'ezoutian bilinear form as a suitable dualizing form. Replacing $Q$, $\Delta$, $\Theta$, and $\eta$ with $Q_\mathfrak{m}$, $\Delta_\mathfrak{m}$, $\Theta_\mathfrak{m}$, and $\eta_\mathfrak{m}$, the results of Sections~\ref{sec:Bezoutians} and~\ref{sec:Scheja-Storch} also hold for local B\'ezoutians and the local Scheja--Storch form. In particular, the local analog of Lemma~\ref{lem:Bezoutian-equals-SS} implies that the local Scheja--Storch form is equal to the local B\'ezoutian form.
\end{remark}

\section{Proof of Theorem~\ref{thm:main-thm}}\label{sec:proof-of-main-theorem}
We now relate the Scheja--Storch form to the $\A^1$-degree. The following theorem was first proven in the case where $p$ is a rational zero by Kass and Wickelgren \cite{KW-EKL}, and then in the case where $p$ has finite separable residue field over the ground field in \cite[Corollary~1.4]{trace-paper}. Recent work of Bachmann and Wickelgren~\cite{BW} gives a general result about the relation between local $\A^1$-degrees and Scheja--Storch forms.

\begin{theorem}\label{thm:deg-is-SS} 
Let $\Char{k}\neq 2$. Let $f: \A^n_k \to \A^n_k$ be an endomorphism of affine space with an isolated zero at a closed point $p$. Then we have that the local $\A^1$-degree of $f$ at $p$ and the Scheja--Storch form of $f$ at $p$ coincide as elements of $\GW(k)$:
\begin{align*}
    \deg_p^{\A^1}(f) = \text{SS}_p(f).
\end{align*}
\end{theorem}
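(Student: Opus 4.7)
The plan is to derive Theorem~\ref{thm:deg-is-SS} as a direct consequence of Bachmann--Wickelgren's comparison theorem~\cite{BW}. Their work establishes, in the generality we need, that for a finite $k$-algebra arising as a complete intersection $Q_\mathfrak{m} = k[x_1,\ldots,x_n]_\mathfrak{m}/(f_1,\ldots,f_n)$ at an isolated zero $p$, the Scheja--Storch bilinear form $\text{SS}_p(f)$ represents the same class in $\GW(k)$ as the local $\A^1$-degree $\deg_p^{\A^1}(f)$, provided the characteristic of $k$ is not $2$. First I would verify that the hypotheses of~\cite{BW} hold: the isolated zero assumption ensures that $Q_\mathfrak{m}$ is a finite $k$-algebra presented as a complete intersection, and the characteristic hypothesis is explicit. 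With this in place, the theorem reduces to a direct quotation of their result.

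The novelty of the statement relative to prior work is that we allow the residue field $k(p)$ to be an arbitrary finite extension of $k$, in particular permitting inseparable extensions. For a rational zero this was the original theorem of Kass--Wickelgren~\cite{KW-EKL}, and for an isolated zero with separable residue field it follows from~\cite{trace-paper}. The new content of Bachmann--Wickelgren~\cite{BW} is precisely the removal of the separability hypothesis, achieved via the machinery of the stable motivic homotopy category rather than by purely algebraic means.

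If one wished to avoid~\cite{BW}, a natural alternative strategy would combine the separable case from~\cite{trace-paper} with a deformation argument: perturb $f$ to a morphism $\tilde f$ whose zeros all have separable residue fields, apply the already-known identification locally at each zero of $\tilde f$, and then invoke the homotopy invariance of both the $\A^1$-degree and the Scheja--Storch form, together with the local-to-global decomposition of Lemma~\ref{lem:LGP-for-SS-forms} and its $\A^1$-degree counterpart from Kass--Wickelgren, to conclude. The main obstacle in this alternative route is the inseparable case: engineering such deformations over an arbitrary base field and tracking how the bilinear form behaves as zeros collide is delicate, and the motivic approach of~\cite{BW} sidesteps these difficulties entirely. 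For this reason, I expect the cleanest proof to be a direct appeal to~\cite{BW}, with the real content of the present paper lying in the subsequent identification of the Scheja--Storch form with the B\'ezoutian form via Lemma~\ref{lem:Bezoutian-equals-SS} and its local analog in Remark~\ref{rem:local-version}.
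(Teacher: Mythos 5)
Your proposal takes exactly the paper's approach: the theorem is deduced by citing Bachmann--Wickelgren, with the isolated-zero hypothesis ensuring $Q_{\mathfrak m}$ is a complete-intersection finite $k$-algebra. The only thing you gloss over is the mechanism by which the characteristic hypothesis actually enters, and it is worth being precise since you describe it as ``explicit'' in \cite{BW}: the paper chains together two of their results, first \cite[Corollary~8.2]{BW} identifying the local index $\ind_p(f,\rho_{\text{triv}},\KO)$ with $\text{SS}_p(f)$ as elements of $\KO^0(k)$, and then \cite[Theorem~7.6, Example~7.7]{BW} identifying that local index with $\deg_p^{\A^1}(f)$; the passage from $\KO^0(k)$ to $\GW(k)$ goes through the unit map $\pi_0(\mathbb S)\to\pi_0(\KO)$ being an isomorphism together with Morel's identification $\pi_0(\mathbb S)\cong\GW(k)$, and it is precisely the isomorphism $\pi_0(\mathbb S)\cong\pi_0(\KO)$ that requires $\Char k\neq 2$. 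Your sketched alternative via deformation to separable zeros and Harder's theorem is a reasonable fallback and is essentially how Lemma~\ref{lemma:unipotent-transformation} is argued elsewhere in the paper, but as you anticipate it is not needed here.
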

\begin{proof} We may rewrite $f$ as a section of the trivial rank $n$ bundle over affine space $\mathcal{O}^n_{\A^n_k} \to \A^n_k$. Under the hypothesis that $p$ is isolated, we may find a neighborhood $X \subseteq \A^n_k$ of $p$ where the section $f$ is non-degenerate (meaning it is cut out by a regular sequence). By \cite[Corollary~8.2]{BW}, the local index of $f$ at $p$ with the trivial orientation, corresponding to the representable Hermitian $K$-theory spectrum $\KO$, agrees with the local Scheja--Storch form as elements of $\KO^0(k)$:
\begin{equation}\label{eqn:ind-is-SS}
\begin{aligned}
    \ind_p(f, \rho_\text{triv}, \KO) = \text{SS}_p(f).
\end{aligned}
\end{equation}
Let $\mathbb{S}$ denote the sphere spectrum in the stable motivic homotopy category $\mathcal{SH}(k)$. It is a well-known fact that Hermitian $K$-theory receives a map from the sphere spectrum, inducing an isomorphism $\pi_0(\mathbb{S}) \xto{\sim} \pi_0(\KO)$ if $\Char{k}\neq 2$ (see for example \cite[6.9]{Hornbostel} for more detail); this is the only place where we use the assumption that $\Char{k}\neq 2$. Combining this with the fact that that $\pi_0(\mathbb{S}) = \GW(k)$ under Morel's degree isomorphism, we observe that Equation~\ref{eqn:ind-is-SS} is really an equality in $\GW(k)$. By \cite[Theorem~7.6,\ Example~7.7]{BW}, the local index associated to the representable theory agrees with the local $\A^1$-degree:
\begin{align*}
    \ind_p(f, \rho_\text{triv},\KO) = \deg_p^{\A^1}(f).
\end{align*}
Combining these equalities gives the desired equality in $\GW(k)$.
\end{proof}

\begin{remark}
Bachmann and Wickelgren in fact show that $\deg_Z^{\A^1}(f)=\text{SS}_Z(f)$ for any isolated zero locus $Z$ of $f$ \cite[Corollary 8.2]{BW}. This gives an alternate viewpoint on the local decomposition described in Lemma~\ref{lem:LGP-for-SS-forms}
\end{remark}

\begin{corollary}\label{cor:local-a1-deg-is-bezoutian} 
Let $\Char{k}\neq 2$. The local B\'{e}zoutian bilinear form is the local $\A^1$-degree.
\end{corollary}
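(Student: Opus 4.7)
The plan is to obtain Corollary \ref{cor:local-a1-deg-is-bezoutian} as a direct composition of two equalities already in hand: Theorem \ref{thm:deg-is-SS} identifies the local $\A^1$-degree with the local Scheja--Storch form, and the local analog of Lemma \ref{lem:Bezoutian-equals-SS} (recorded in Remark \ref{rem:local-version}) identifies the local Scheja--Storch form with the local B\'ezoutian bilinear form. Chaining these gives the desired equality in $\GW(k)$.

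More concretely, I would fix a closed point $p$ corresponding to a maximal ideal $\mathfrak{m}$ at which $f$ has an isolated zero and note that since $\Char{k} \neq 2$, Theorem \ref{thm:deg-is-SS} applies to yield $\deg_p^{\A^1}(f) = \text{SS}_p(f)$, where $\text{SS}_p(f) = \Phi_{\eta_\mathfrak{m}}$ is the local Scheja--Storch form built from $\eta_\mathfrak{m} := \Theta_\mathfrak{m}^{-1}(1)$. Next I would invoke Remark \ref{rem:local-version}: since $\Delta_\mathfrak{m} = (\rho_\mathfrak{m} \otimes \rho_\mathfrak{m})(\det(\tilde a_{ij}))$ is the local B\'ezoutian in $Q_\mathfrak{m} \otimes_k Q_\mathfrak{m}$, running the argument of Lemma \ref{lem:Bezoutian-equals-SS} verbatim with $(Q,\Delta,\Theta,\eta)$ replaced by $(Q_\mathfrak{m}, \Delta_\mathfrak{m}, \Theta_\mathfrak{m}, \eta_\mathfrak{m})$ shows that $\eta_\mathfrak{m}$ is precisely the unique dualizing linear form for which the bases $\{a_i\}, \{b_i\}$ appearing in $\Delta_\mathfrak{m} = \sum_i a_i \otimes b_i$ are dual, i.e.\ $\Phi_{\eta_\mathfrak{m}} = \beta_{f,\mathfrak{m}}$.

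There is essentially no obstacle here — the corollary is formal, and the only point to verify is that nothing in the proof of Lemma \ref{lem:Bezoutian-equals-SS} used globality: the naturality of $\chi$ is the component-wise statement applied at $Q_\mathfrak{m}$, and the characterization of the dualizing form via Proposition \ref{prop:equivalent-conditions-bases-dual} is a statement about finite-dimensional $k$-algebras and so passes to $Q_\mathfrak{m}$ without change. If anything warrants emphasis, it is that the hypothesis $\Char{k} \neq 2$ is inherited from Theorem \ref{thm:deg-is-SS} (where it enters through the isomorphism $\pi_0(\mathbb{S}) \xto{\sim} \pi_0(\KO)$), not from the B\'ezoutian side of the identification. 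The resulting two-line proof then serves as the local half of Theorem \ref{thm:main-thm}.
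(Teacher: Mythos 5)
Your proposal is correct and matches the paper's proof essentially verbatim: both cite Remark~\ref{rem:local-version} to transport Lemma~\ref{lem:Bezoutian-equals-SS} to the local setting, identifying $\beta_{f,\mathfrak{m}}$ with $\Phi_{\eta_\mathfrak{m}}$, and then invoke Theorem~\ref{thm:deg-is-SS} to conclude. Your extra remarks (that nothing in the argument for Lemma~\ref{lem:Bezoutian-equals-SS} is special to the global algebra, and that the $\Char{k}\neq 2$ hypothesis is inherited entirely from Theorem~\ref{thm:deg-is-SS}) are accurate and consistent with the paper.
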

\begin{proof}
As discussed in Remark~\ref{rem:local-version}, we can modify Lemma~\ref{lem:Bezoutian-equals-SS} to the local case by replacing $Q$, $\Delta$, $\Theta$, and $\eta$ with $Q_\mathfrak{m}$, $\Delta_\mathfrak{m}$, $\Theta_\mathfrak{m}$, and $\eta_\mathfrak{m}$. The local B\'ezoutian form is thus equal to the local Scheja--Storch form, which is equal to the local $\A^1$-degree by Theorem~\ref{thm:deg-is-SS}.
\end{proof}

In contrast to previous techniques for computing the local $\A^1$-degree at rational or separable points, Corollary~\ref{cor:local-a1-deg-is-bezoutian} gives an algebraic formula for the local $\A^1$-degree at any closed point.

As a result of the local decomposition of Scheja--Storch forms, the B\'{e}zoutian form agrees with the $\A^1$-degree globally as well.

\begin{corollary}\label{cor:bezoutian-is-global-degree}
Let $\Char{k}\neq 2$. The B\'ezoutian bilinear form is the global $\A^1$-degree.
\end{corollary}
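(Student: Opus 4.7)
The plan is to chain together the structural results already in hand. First, I would invoke Lemma~\ref{lem:Bezoutian-equals-SS} to identify the global B\'ezoutian bilinear form $\beta_f$ with the global Scheja--Storch form $\Phi_\eta$ as a class in $\GW(k)$. Second, I would apply Lemma~\ref{lem:LGP-for-SS-forms} to obtain the orthogonal decomposition
\[
\Phi_\eta \;=\; \bigoplus_\mathfrak{m} \Phi_{\eta_\mathfrak{m}},
\]
where $\mathfrak{m}$ ranges over the maximal ideals of $Q$ (equivalently, over the isolated zeros of $f$).

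Third, for each such $\mathfrak{m}$, Theorem~\ref{thm:deg-is-SS} identifies the local summand $\Phi_{\eta_\mathfrak{m}}$ with the local $\A^1$-degree $\deg_\mathfrak{m}^{\A^1}(f)$; this step is precisely where the assumption $\Char{k}\neq 2$ gets used, since it is only under that hypothesis that the map $\pi_0(\mathbb{S})\to\pi_0(\KO)$ is known to be an isomorphism. Fourth, I would apply the Kass--Wickelgren global-to-local decomposition $\deg^{\A^1}(f) = \sum_\mathfrak{m}\deg_\mathfrak{m}^{\A^1}(f)$ from~\cite[Lemma~19]{KW-cubic} (recalled in the introduction) to assemble the local degrees into the global one, yielding $\beta_f = \deg^{\A^1}(f)$ in $\GW(k)$.

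No serious obstacle is expected: the statement is a formal corollary, and the argument is essentially a commutative diagram of identifications. The only compatibility worth spelling out is that the two decompositions (Scheja--Storch via Lemma~\ref{lem:LGP-for-SS-forms} and $\A^1$-degree via Kass--Wickelgren) are indexed by the same set, namely the maximal ideals of $Q = k[x_1,\ldots,x_n]/(f_1,\ldots,f_n)$, which is tautological. Thus the proof can be written in a couple of lines, citing Lemma~\ref{lem:Bezoutian-equals-SS}, Lemma~\ref{lem:LGP-for-SS-forms}, Theorem~\ref{thm:deg-is-SS}, and~\cite[Lemma~19]{KW-cubic} in sequence.
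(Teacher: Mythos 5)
Your proposal is correct and follows the paper's proof essentially verbatim: Lemma~\ref{lem:Bezoutian-equals-SS} identifies the B\'ezoutian form with the global Scheja--Storch form, Lemma~\ref{lem:LGP-for-SS-forms} gives the local decomposition, Theorem~\ref{thm:deg-is-SS} converts each local summand into a local $\A^1$-degree, and the Kass--Wickelgren global-to-local decomposition reassembles these into $\deg^{\A^1}(f)$. The paper simply records this chain of equalities in a single displayed equation without separately citing~\cite[Lemma~19]{KW-cubic} at the last step, but the reasoning is the same.
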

\begin{proof}
Let $\Phi_\eta$ denote the B\'ezoutian bilinear form, which is equal to the global Scheja--Storch bilinear form by Lemma~\ref{lem:Bezoutian-equals-SS}. By Lemma~\ref{lem:LGP-for-SS-forms}, the global Scheja--Storch form decomposes as a block sum of local Scheja--Storch forms. By Theorem~\ref{thm:deg-is-SS}, the local Scheja--Storch bilinear form agrees with the local $\A^1$-degree. Finally, we have that the sum of local $\A^1$-degrees is the global $\A^1$-degree. Putting this all together, we have
\begin{equation}\label{eqn:bez-ss-a1-deg}
\begin{aligned}
    \Phi_\eta = \text{SS}(f) = \sum_\mathfrak{m} \text{SS}_\mathfrak{m}(f) = \sum_\mathfrak{m} \deg_\mathfrak{m}^{\A^1}(f) = \deg^{\A^1}(f).
\end{aligned}
\qedhere
\end{equation}
\end{proof}

\begin{remark}
It is not known if $\GW$ is represented by $\KO$ over fields of characteristic 2, which is the source of our assumption that $\Char{k}\neq 2$. If this problem is resolved, one can remove any characteristic restrictions from our results. Alternately, Lemma~\ref{lem:LGP-for-SS-forms} implies Corollaries~\ref{cor:local-a1-deg-is-bezoutian} and~\ref{cor:bezoutian-is-global-degree} if all roots of $f$ satisfy $\deg_p^{\A^1}(f)=\text{SS}_p(f)$. By \cite{KW-EKL}, \cite{trace-paper}, and \cite[Proposition 34]{KW-cubic}, Corollaries~\ref{cor:local-a1-deg-is-bezoutian} and~\ref{cor:bezoutian-is-global-degree} are true in any characteristic if all roots of $f$ are rational, \'etale, or separable.
\end{remark}

\subsection{Computing the B\'ezoutian bilinear form}\label{sec:computing algorithm}
We now prove Theorem~\ref{thm:main-thm} by describing a method for computing the class in $\GW(k)$ of the B\'ezoutian bilinear form in terms of the B\'ezoutian.

\begin{proof}[Proof of Theorem~\ref{thm:main-thm}] Let $R$ denote either a global algebra $Q$ or a local algebra $Q_\mathfrak{m}$. Let $\{\alpha_i\}$ be any basis for $R$, and express
\begin{align*}
    \Bez(f_1, \ldots, f_n) &= \sum_{i,j} B_{i,j}\alpha_i \otimes \alpha_j.
\end{align*}
Rewriting this, we have
\begin{align*}
    \Bez(f_1, \ldots, f_n) &= \sum_i \alpha_i \otimes \left( \sum_j B_{i,j}\alpha_j \right).
\end{align*}
Let $\beta_i := \sum_j B_{i,j} \alpha_j$, so that $\left\{ \alpha_i \right\}$ and $\left\{ \beta_i \right\}$ are dual bases. Then for any linear form $\lambda: R \to k$ for which $\left\{ \alpha_i \right\}$ and $\left\{ \beta_i \right\}$ are dual, we will have that $\Phi_\lambda$ agrees with the global or local $\A^1$-degree (depending on our choice of $R$) by Corollaries~\ref{cor:local-a1-deg-is-bezoutian} and~\ref{cor:bezoutian-is-global-degree}. Let $\lambda$ be such a form. The product of $\alpha_i$ and $\beta_j$ is given by
\begin{align*}
    \alpha_i \beta_j &= \alpha_i \cdot \sum_s B_{j,s} \alpha_s.
\end{align*}
Applying $\lambda$ to each side, we get an indicator function
\begin{align*}
    \delta_{ij} &= \lambda(\alpha_i \beta_j) = \lambda \left( \alpha_i \sum_s B_{j,s} \alpha_s \right) = \sum_s B_{j,s} \lambda \left( \alpha_i \alpha_s \right).
\end{align*}
Varying over all $i,j,s$, this equation above tells us that the identity matrix is equal to the product of the matrix $(B_{j,s})$ and the matrix $(\lambda(\alpha_i\alpha_s))=(\lambda(\alpha_s\alpha_i))$. Explicitly, we have that
\begin{align*}
    \begin{pmatrix} 1 & 0 & \cdots & 0 \\
    0 & 1 & \cdots & 0 \\
    \vdots & \vdots & \ddots & \vdots \\ 
    0 & 0 & \cdots & 1\end{pmatrix} &=
    \begin{pmatrix} B_{1,1} & B_{1,2} & \cdots & B_{1,m} \\
    B_{2,1} & B_{2,2} & \cdots & B_{2,m} \\
    \vdots & \vdots & \ddots & \vdots \\
    B_{m,1} & B_{m,2} & \cdots & B_{m,m}
    \end{pmatrix}
    \begin{pmatrix} \lambda(\alpha_1^2) & \lambda(\alpha_1 \alpha_2) & \cdots & \lambda(\alpha_1 \alpha_m) \\
    \lambda(\alpha_2 \alpha_1) & \lambda(\alpha_2^2) & \cdots & \lambda(\alpha_2 \alpha_m) \\
    \vdots & \vdots & \ddots & \vdots \\
    \lambda(\alpha_m\alpha_1) & \lambda(\alpha_m \alpha_2) & \cdots & \lambda(\alpha_m^2)
    \end{pmatrix}.
\end{align*}

Thus the Gram matrix for $\Phi_\lambda$ in the basis $\{\alpha_i\}$ is $(B_{i,j})^{-1}$. Since any symmetric bilinear form can be diagonalized, there is an invertible $m\times m$ matrix $S$ such that $S^T\cdot(B_{i,j})\cdot S$ is diagonal. Since
$(S^T\cdot(B_{i,j})\cdot S)\cdot (S^{-1}\cdot (B_{i,j})^{-1}\cdot (S^{-1})^T)$
is equal to the identity matrix, it follows that $S^{-1}\cdot (\lambda(\alpha_i\alpha_j))\cdot (S^{-1})^T$ is diagonal with entries inverse to the diagonal entries of $S^T\cdot(B_{i,j})\cdot S$. Applying the equality $\left\langle a \right\rangle = \left\langle 1/a \right\rangle$ along the diagonals, it follows that $(B_{i,j})^{-1}$ and $(B_{i,j})$ define the same element in $\GW(k)$. Theorem~\ref{thm:main-thm} now follows from Corollaries~\ref{cor:local-a1-deg-is-bezoutian} and~\ref{cor:bezoutian-is-global-degree}.
\end{proof}

The following tables describe algorithms for computing the global and local $\A^1$-degrees in terms of the B\'ezoutian bilinear form. A Sage implementation of these algorithms is available at~\cite{code}.

\begin{figure}[h] 
  \centering
  \textbf{Computing the global} $\A^1$-\textbf{degree via the B\'ezoutian}:
\begin{enumerate}
    \item Compute the $\Delta_{ij}$ and the image of their determinant $\Bez(f) = \det \left( \Delta_{ij} \right)$ in $k[X,Y]/(f(X),f(Y))$.
    \item Pick a $k$-vector space basis $a_1, \ldots, a_m$ of $Q = k[X_1, \ldots, X_n]/(f_1, \ldots, f_n)$. Find $B_{i,j}\in k$ such that
    \begin{align*}
        \Bez(f) &= \sum_{i=1}^m B_{i,j}a_i(X)a_j(Y).
    \end{align*}
    \item The matrix $B= \left( B_{i,j} \right)$ represents $\deg^{\A^1}(f)$. Diagonalize $B$ to write its class in $\GW(k)$.
\end{enumerate}
\label{fig:global-deg}
\end{figure}

\begin{figure}[h] 
  \centering
  \textbf{Computing the local} $\A^1$-\textbf{degree via the B\'ezoutian}:
\begin{enumerate}
    \item Compute the $\Delta_{ij}$ and the image of their determinant $\Bez(f) = \det \left( \Delta_{ij} \right)$ in $k[X,Y]/(f(X),f(Y))$.
    \item Pick a $k$-vector space basis $a_1, \ldots, a_m$ of $Q_\mathfrak{m} = k[X_1, \ldots, X_n]_\mathfrak{m}/(f_1, \ldots, f_n)$. Find $B_{i,j}\in k$ such that
    \begin{align*}
        \Bez(f) &= \sum_{i=1}^m B_{i,j}a_i(X)a_j(Y).
    \end{align*}
    \item The matrix $B= \left( B_{i,j} \right)$ represents $\deg^{\A^1}_\mathfrak{m}(f)$. Diagonalize $B$ to write its class in $\GW(k)$.
\end{enumerate}
\label{fig:local-deg}
\end{figure}

\section{Calculation rules}\label{sec:calculation-rules}
Using the B\'ezoutian characterization of the $\A^1$-degree, we are able to establish various calculation rules for local and global $\A^1$-degrees. See~\cite{kst,quick2020representability} for related results in the local case. 

\begin{proposition}\label{prop:same ideals}
Suppose that $f = (f_1, \ldots, f_n)$ and $g = (g_1, \ldots, g_n)$ are endomorphisms of affine space that generate the same ideal
\begin{align*}
    I = (f_1, \ldots, f_n) = (g_1, \ldots, g_n) \triangleleft k[x_1, \ldots, x_n].
\end{align*}
If $\Bez(f)=\Bez(g)$ in $k[X,Y]$, then $\deg^{\A^1}(f)=\deg^{\A^1}(g)$, and $\deg^{\A^1}_p(f)=\deg^{\A^1}_p(g)$ for all $p$.
\end{proposition}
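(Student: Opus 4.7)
The proof should be a short and direct application of Theorem~\ref{thm:main-thm}, since the heavy lifting has already been done. First I would observe that the hypothesis $(f_1,\ldots,f_n) = (g_1,\ldots,g_n)$ implies that the global algebras $Q = k[x_1,\ldots,x_n]/(f_1,\ldots,f_n)$ and $k[x_1,\ldots,x_n]/(g_1,\ldots,g_n)$ are literally the same ring. Likewise, for any maximal ideal $\mathfrak{m}$, the local algebras $Q_\mathfrak{m}$ agree. Since the quotient ring $k[X,Y]/(f(X),f(Y))$ equals $k[X,Y]/(g(X),g(Y))$, the hypothesis $\Bez(f) = \Bez(g)$ in $k[X,Y]$ (and hence in the quotient) means the two B\'ezoutians also coincide as elements of $Q \otimes_k Q$ under the isomorphism $\varepsilon$.

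Next I would fix a single $k$-vector space basis $\{a_1,\ldots,a_m\}$ of $Q$ and expand both B\'ezoutians in the form prescribed by Definition~\ref{def:Bezoutian-intro-def}:
\[
\Bez(f) = \sum_{i,j} B_{i,j}^f\, a_i(X) a_j(Y), \qquad \Bez(g) = \sum_{i,j} B_{i,j}^g\, a_i(X) a_j(Y).
\]
Because $\{a_i(X) a_j(Y)\}_{i,j}$ is a $k$-vector space basis for $Q \otimes_k Q$, the expansion is unique, so $B_{i,j}^f = B_{i,j}^g$ for all $i,j$. By Theorem~\ref{thm:main-thm}, the Gram matrix $(B_{i,j}^f)$ represents $\deg^{\A^1}(f)$ in $\GW(k)$ and likewise $(B_{i,j}^g)$ represents $\deg^{\A^1}(g)$, so the two global degrees agree.

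For the local statement, I would repeat the identical argument after replacing $Q$ with $Q_\mathfrak{m}$, $\Bez(f)$ with its local analogue $\Bez_\mathfrak{m}(f)$ in $Q_\mathfrak{m} \otimes_k Q_\mathfrak{m}$, and invoking the local case of Theorem~\ref{thm:main-thm} (equivalently Corollary~\ref{cor:local-a1-deg-is-bezoutian}). The image of $\Bez(f)$ under the localization map $\lambda_\mathfrak{m} \otimes \lambda_\mathfrak{m}$ is the local B\'ezoutian $\Delta_\mathfrak{m}$ (as recorded in Section~\ref{sec:local-to-global}), so $\Bez(f) = \Bez(g)$ immediately yields equality of the local B\'ezoutians, and the basis-expansion argument concludes as before.

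There is essentially no obstacle: the entire content of the proposition is already packaged into Theorem~\ref{thm:main-thm}, and the only thing to verify is that the B\'ezoutian, viewed as a tensor in $Q \otimes_k Q$, determines the Gram matrix uniquely given a choice of basis. The only mildly subtle point is making sure the local version is handled in parallel, which follows formally from the compatibility $(\lambda_\mathfrak{m} \otimes \lambda_\mathfrak{m})(\Bez(f)) = \Delta_\mathfrak{m}$ established in Section~\ref{sec:local-to-global}.
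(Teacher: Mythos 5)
Your proof is correct and follows the same route the paper takes: fix a single basis of $Q$ (resp.\ $Q_\mathfrak{m}$), note that equal B\'ezoutians expand with equal coefficients, hence equal Gram matrices, and invoke Theorem~\ref{thm:main-thm}. Your extra remark about the compatibility $(\lambda_\mathfrak{m} \otimes \lambda_\mathfrak{m})(\Bez(f)) = \Delta_\mathfrak{m}$ just makes explicit the small step the paper leaves implicit in the local case.
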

\begin{proof} 
We may choose the same basis for $Q = k[x_1,\ldots,x_n]/I$ (or $Q_p$ in the local case) in our computation for the degrees of $f$ and $g$. The B\'ezoutians $\Bez(f) = \Bez(g)$ will have the same coefficients in this basis, so their Gram matrices will coincide.
\end{proof}

The following result is the global analogue of \cite[Lemma 14]{quick2020representability}.

\begin{lemma}\label{lemma:composition-linear-transformation}
Let $f=(f_1,\ldots,f_n):\A^n_k\rightarrow \A^n_k$ be an endomorphism of $\A^n_k$ with only isolated zeros. Let $A\in k^{n\times n}$ be an invertible matrix. Then
\[\langle\det A\rangle\cdot\deg^{\A^1}(f)=\deg^{\A^1}(A\circ f)\]
as elements of $\GW(k)$.
\end{lemma}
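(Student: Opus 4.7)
The plan is to exploit the B\'ezoutian description of the degree (Theorem~\ref{thm:main-thm}) and show that the B\'ezoutian of $g := A\circ f$ is simply $\det(A)$ times the B\'ezoutian of $f$. First, I would observe that since $A$ is invertible, $(g_1,\ldots,g_n) = (f_1,\ldots,f_n)$ as ideals in $k[x_1,\ldots,x_n]$, so the algebras $Q$ (and each local algebra $Q_\mathfrak{m}$) are literally the same for $f$ and for $g$. In particular, any $k$-vector space basis $\{a_i\}$ of $Q$ serves both endomorphisms.

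Next, I would compute the difference quotients $\Delta^g_{ij}$ for $g$ in terms of those of $f$. Since $g_i = \sum_k A_{ik} f_k$, the numerator defining $\Delta^g_{ij}$ is $k$-linear in $f$, so
\begin{align*}
    \Delta^g_{ij} = \sum_{k=1}^n A_{ik}\, \Delta^f_{kj}.
\end{align*}
In matrix form, $(\Delta^g_{ij}) = A \cdot (\Delta^f_{ij})$, hence $\det(\Delta^g_{ij}) = \det(A)\cdot \det(\Delta^f_{ij})$, giving $\Bez(g) = \det(A)\cdot \Bez(f)$ in $k[X,Y]/(f(X),f(Y))$. Expanding $\Bez(f) = \sum_{i,j} B_{i,j}\, a_i(X) a_j(Y)$ in the chosen basis, we therefore get $\Bez(g) = \sum_{i,j} (\det(A)\cdot B_{i,j})\, a_i(X) a_j(Y)$, so the Gram matrix associated to $g$ is $\det(A)\cdot B$.

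Finally, I would invoke Theorem~\ref{thm:main-thm}, which identifies $\deg^{\A^1}(f)$ with the class of $B$ and $\deg^{\A^1}(g)$ with the class of $\det(A)\cdot B$ in $\GW(k)$. It remains to check that $[\det(A)\cdot B] = \langle \det A\rangle \cdot [B]$ in $\GW(k)$. Diagonalize $B$ via some $S\in\mathrm{GL}_m(k)$ so that $S^T B S = \mathrm{diag}(b_1,\ldots,b_m)$; then the same conjugation diagonalizes $\det(A)\cdot B$ as $\mathrm{diag}(\det(A)b_1,\ldots,\det(A)b_m)$, whose class is $\sum_i\langle \det(A)b_i\rangle = \langle\det A\rangle\cdot\sum_i\langle b_i\rangle = \langle\det A\rangle\cdot [B]$, using that multiplication in $\GW(k)$ satisfies $\langle c\rangle\cdot\langle b\rangle = \langle cb\rangle$. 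The local statement follows identically by working in $Q_p$ rather than $Q$.

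There is no serious obstacle here; the only mild subtlety is confirming that $Q$ and the chosen basis can be shared between $f$ and $A\circ f$, which is what makes the B\'ezoutian calculation line up cleanly.
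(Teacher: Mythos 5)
Your proposal is correct and follows essentially the same route as the paper: identify the ideals (hence the algebra $Q$ and choice of basis), observe $(\Delta^{A\circ f}_{ij}) = A\cdot(\Delta^f_{ij})$ so the B\'ezoutian scales by $\det A$, and conclude the Gram matrix scales by $\det A$. The one small improvement is that you spell out explicitly why scaling a Gram matrix by $c$ multiplies its class in $\GW(k)$ by $\gw{c}$, whereas the paper folds this into a somewhat loose citation of Proposition~\ref{prop:same ideals}.
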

\begin{proof}
Write $A=(a_{ij})$
and 
\[\Delta_{ij}^g=\frac{g_i(X_1,\ldots,X_j,Y_{j+1},\ldots,Y_n)-g_i(X_1,\ldots,X_{j-1},Y_j\ldots,Y_n)}{X_j-Y_j},\]
where $g$ is either $f$ or $A\circ f$. Then $\Delta_{ij}^{A\circ f}=\sum_{l=1}^n a_{il}\Delta_{lj}^f$, and thus $(\Delta_{ij}^{A\circ f})=A\cdot (\Delta_{ij}^f)$ as matrices over $k[X,Y]$. The ideals generated by $A\circ(f_1,\ldots,f_n)$ and $(f_1,\ldots,f_n)$ are equal, and the images in $Q\otimes_k Q$ of $\det (\Delta_{ij}^{A\circ f})$ and $\det A\cdot \det (\Delta f_{ij})$ are equal. Thus the Gram matrix of the B\'ezoutian bilinear form for $A\circ f$ is $\det A$ times the Gram matrix of the B\'ezoutian bilinear form for $f$. Proposition~\ref{prop:same ideals} then proves the claim.
\end{proof}

\begin{example} We may apply Lemma~\ref{lemma:composition-linear-transformation} in the case where $A$ is a permutation matrix associated to some permutation $\sigma \in \Sigma_n$. Letting $f_\sigma := \left( f_{\sigma(1)}, \ldots, f_{\sigma(n)} \right)$,
we observe that
\begin{align*}
    \deg_p^{\A^1}(f_\sigma) = \langle\sgn(\sigma)\rangle \cdot \deg_p^{\A^1}(f)
\end{align*}
at any isolated zero $p$ of $f$, and an analogous statement is true for global degrees as well.
\end{example}

Next, we prove a generalization of \cite[Lemma 12]{kst}.

\begin{lemma}\label{lemma:unipotent-transformation}
Let $f,g:\A^n_k\rightarrow \A^n_k$ be two endomorphisms of $\A^n_k$ with only isolated zeros. Let $L\in M_n(k[x_1,\ldots,x_n])$ be a unipotent $n\times n$ matrix. Then $\deg^{\A^1}(f\circ g)=\deg^{\A^1}(f\circ L\circ g)$, and $\deg^{\A^1}_p(f\circ g)=\deg^{\A^1}_p(f\circ L\circ g)$ for any isolated zero $p$.
\end{lemma}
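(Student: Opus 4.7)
The plan is to reduce to the case of elementary unipotent matrices and then apply the B\'ezoutian formula of Theorem~\ref{thm:main-thm}, in the spirit of the proof of Lemma~\ref{lemma:composition-linear-transformation}.

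Since $L - I_n$ is nilpotent in $M_n(k[x_1,\ldots,x_n])$, I would first factor $L$ as a finite product of elementary unipotents $L_s = I_n + h_s E_{i_s j_s}$ with $i_s \neq j_s$ and $h_s \in k[x_1,\ldots,x_n]$. By induction on the number of factors, it then suffices to treat the case $L = I_n + h E_{ij}$ of a single elementary factor, in which $L$ differs from the identity only in the single off-diagonal entry $h$ and $\det L = 1$.

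With that reduction in hand, I would compute the B\'ezoutian matrix $(\Delta^{f \circ L \circ g}_{kl})$ using Definition~\ref{def:Bezoutian} and the Leibniz rule for divided differences applied to the composition. The resulting matrix decomposes as $(\Delta^{f \circ g}_{kl})$ plus correction terms proportional to the divided differences of $h$ in the $j$-th variable. After reducing in the appropriate tensor quotient so as to compute the class in $\GW(k)$, one verifies that the correction terms assemble into a unipotent (determinant $1$) congruence of the Gram matrix of the B\'ezoutian bilinear form, exploiting that $\det L = 1$. Since a unipotent congruence preserves the isomorphism class of a symmetric bilinear form in $\GW(k)$, Theorem~\ref{thm:main-thm} yields $\deg^{\A^1}(f \circ g) = \deg^{\A^1}(f \circ L \circ g)$, and the same argument applied to the local algebra at any isolated zero $p$ gives the local statement.

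The main obstacle will be the explicit divided-difference bookkeeping in the elementary case: one must carefully track how the polynomial entry $h$ interacts with the chain rule and check that the correction terms indeed produce a unipotent congruence rather than a more general change of basis, along the lines of the matrix identity $(\Delta^{A \circ f}_{ij}) = A \cdot (\Delta^{f}_{ij})$ used in Lemma~\ref{lemma:composition-linear-transformation}. A more conceptual alternative, which sidesteps the direct computation, is to invoke $\A^1$-homotopy invariance of the degree along the family $L_t := I_n + t(L - I_n)$, which is unipotent for every $t \in k$ and interpolates $I_n$ and $L$; the cost is the technical task of verifying that zeros of $f \circ L_t \circ g$ remain uniformly isolated in $t$ so that homotopy invariance applies.
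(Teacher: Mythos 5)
Your primary route---factoring $L$ as a product of elementary unipotents $L_s = I_n + h_s E_{i_s j_s}$ and inducting on the number of factors---breaks down at the factorization step. Over $k[x_1,\ldots,x_n]$ with $n\ge 2$, a unipotent matrix need not lie in the elementary subgroup $E_n(k[x_1,\ldots,x_n])$. A concrete obstruction is Cohn's matrix
\[
L=\begin{pmatrix} 1 + x_1 x_2 & x_1^2 \\ -x_2^2 & 1 - x_1 x_2 \end{pmatrix}\in M_2(k[x_1,x_2]),
\]
which satisfies $(L-I_2)^2=0$ and so is unipotent, yet belongs to $SL_2(k[x_1,x_2])\setminus E_2(k[x_1,x_2])$. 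Since $L$ acts on a fixed $\A^n_k$, you cannot enlarge the matrix size to invoke stable factorization results, so the induction never gets off the ground. (Indeed, the paper deduces the single-transvection statement---adding a polynomial multiple of one $f_t$ to another $f_s$---as a corollary of this lemma, not the other way around.) There is a second, independent problem with the sketched base case: for $L = I_n + hE_{ij}$ with $h$ a non-constant polynomial, the clean identity $(\Delta^{A\circ f}_{ij})=A\cdot(\Delta^f_{ij})$ from Lemma~\ref{lemma:composition-linear-transformation} fails, because divided differences satisfy a Leibniz-type rule in which one factor is evaluated at $X$ and the other at $Y$, producing genuine cross terms. The assertion that these ``assemble into a unipotent congruence'' is the whole content of what needs proving, and it is not verified.

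The alternative you sketch at the end is, in essence, the paper's actual proof. With $L_t := I_n + t(L - I_n)$, the paper first observes that $L_t$ is unipotent---hence invertible as a matrix over $k[x_1,\ldots,x_n]$---for every $t$, so the zeros of $f\circ L_t\circ g$ coincide with those of $f\circ g$ and remain isolated; this is exactly the ``uniformly isolated'' point you flagged, and it is what makes $\widetilde Q = k[t][x_1,\ldots,x_n]/(f\circ L_t\circ g)$ finite over $k[t]$. The paper then applies the Scheja--Storch construction over $k[t]$ and Harder's theorem~\cite[Lemma 30]{KW-EKL}, which plays the role of homotopy invariance for symmetric bilinear forms over $k[t]$, to conclude the specializations at $t=0$ and $t=1$ agree in $\GW(k)$. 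You identified the right tool but should have led with it rather than the elementary-factorization reduction.
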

\begin{proof}
We first show that because $g$ only has isolated zeros, $L\circ g$ only has isolated zeros as well. Since $L$ is unipotent, there exists some $m$ such that $L^m=I_n$, where $I_n$ is the $n\times n$ identity matrix. Suppose that $p$ is a zero of $L\circ g$. That is, $L(p)\cdot g(p)=0$, where we think of $g=(g_1,\ldots,g_n)$ as a column vector. Then
\begin{align*}
    0&=L^{m-1}(p)\cdot 0 =L^m(p)\cdot g(p) =g(p),
\end{align*}
so $p$ is a zero of $g$. In particular, all zeros of $L\circ g$ are zeros of $g$. As $g$ only has isolated zeros by assumption, it follows that $L\circ g$ only has isolated zeros as well. We remark that because $f$ and $L\circ g$ only have isolated zeros, their composition $f\circ L\circ g$ only has isolated zeros.

Just as in \cite[Lemma 12]{kst},
we now define $L_t=I_n+t\cdot(L-I_n)$. Note that because $L$ is unipotent, $L-I_n$ is nilpotent, so $t\cdot(L-I_n)$ is nilpotent as well. Thus $L_t$ is unipotent, and hence $f\circ L_t\circ g$ only has isolated zeros for all $t$. Set
\[\widetilde{Q}=\frac{k[t][x_1,\ldots,x_n]}{(f\circ L_t\circ g)}.\]
Then~\cite[p. 182]{SchejaStorch} gives us a Scheja--Storch form $\widetilde{\eta}:\widetilde{Q}\to k[t]$ such that the bilinear form $\Phi_{\widetilde{\eta}}:\widetilde{Q}\times\widetilde{Q}\to k[t]$ is symmetric and non-degenerate. By Harder's theorem~\cite[Lemma 30]{KW-EKL}, the stable isomorphism class of $\Phi_{\widetilde{\eta}}\otimes_k k(t_0)\in\GW(k)$ is independent of $t_0\in\A^1_k(k)$. In particular, the Scheja--Storch bilinear forms of $f\circ L_0\circ g=f\circ g$ and $f\circ L_1\circ g=f\circ L\circ g$ are isomorphic. One can repeat the same argument in the local case to show that the local Scheja--Storch bilinear forms of $f\circ L_0\circ g$ and $f\circ L_1\circ g$ are isomorphic as well.
\end{proof}

As an immediate corollary, we get the global analogue of \cite[Lemma 13]{quick2020representability}.

\begin{cor}
Let $f=(f_1,\ldots,f_n):\A^n_k\rightarrow \A^n_k$ be an endomorphism of $\A^n_k$ with only isolated zeros. Let $h\in k[x_1,\ldots,x_n]$, and define
\[f'=(f_1,\ldots,f_s+h\cdot f_t,\ldots,f_n):\A^n_k\rightarrow \A^n_k\]
for some $s\neq t$. Then $\deg^{\A^1}(f)=\deg^{\A^1}(f')$.
\end{cor}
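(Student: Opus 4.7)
The plan is to express $f'$ as $L \circ f$ for a carefully chosen unipotent matrix $L$ with polynomial entries, and then invoke Lemma~\ref{lemma:unipotent-transformation}.

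Specifically, I would let $E_{st}$ denote the elementary matrix with a $1$ in position $(s,t)$ and zeros elsewhere, and define
\[
L := I_n + h \cdot E_{st} \in M_n(k[x_1,\ldots,x_n]).
\]
Since $s \neq t$, a direct computation gives $E_{st}^2 = 0$, so $L - I_n$ is nilpotent and $L$ is unipotent. Multiplying the matrix $L$ by the column vector $(f_1, \ldots, f_n)^T$, rows $i \neq s$ simply return $f_i$, while row $s$ returns $f_s + h \cdot f_t$. Thus $L \circ f = f'$ as endomorphisms of $\A^n_k$.

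Next, I would apply Lemma~\ref{lemma:unipotent-transformation} with outer map $\id_{\A^n_k}$, inner map $f$, and unipotent matrix $L$. The identity map has its unique zero at the origin, so it has only isolated zeros, and $f$ has only isolated zeros by hypothesis. Both hypotheses of the lemma are therefore satisfied, and the lemma yields
\[
\deg^{\A^1}(f) = \deg^{\A^1}(\id_{\A^n_k} \circ f) = \deg^{\A^1}(\id_{\A^n_k} \circ L \circ f) = \deg^{\A^1}(f'),
\]
which is the desired equality.

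I do not anticipate any real obstacle here: the substantive work---deforming along the family $L_t = I_n + t(L - I_n)$ and invoking Harder's theorem to conclude invariance of the Scheja--Storch form---is already packaged into Lemma~\ref{lemma:unipotent-transformation}. The only content of the corollary is recognizing that the elementary row operation $f \mapsto f'$ is precisely left-multiplication by a unipotent matrix in $M_n(k[x_1,\ldots,x_n])$, and that $\id_{\A^n_k}$ is an admissible choice of outer endomorphism for the lemma.
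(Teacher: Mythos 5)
Your proof is correct and follows the paper's argument essentially verbatim: the matrix $L = I_n + h\cdot E_{st}$ you construct is exactly the matrix with entries $a_{ii}=1$, $a_{st}=h$, $a_{ij}=0$ otherwise that appears in the paper, and both proofs then invoke Lemma~\ref{lemma:unipotent-transformation} with the identity as the outer map and $f$ as the inner map. The only (harmless) addition you make is the explicit observation that $E_{st}^2=0$, which the paper leaves implicit.
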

\begin{proof}
Let $\id=(x_1,\ldots,x_n):\A^n_k\to\A^n_k$ be the identity morphism. Let $a_{ii}=1$ for $1\leq i\leq n$, $a_{st}=h$, and $a_{ij}=0$ otherwise. Then $L=(a_{ij})$ is unipotent with $f'=L\circ f$. By Lemma~\ref{lemma:unipotent-transformation}, $f'=\id\circ L\circ f$ and $f=\id\circ f$ have the same global degree.
\end{proof}

The following product rule is a consequence of Morel's proof that the $\A^1$-degree is a ring isomorphism~\cite[Lemma 6.3.8]{Morel-Trieste}. We give a more hands-on proof of this product rule. While we state the product rule for global degrees, the same arguments imply that the product rule holds for local degrees. See~\cite[Theorem 13]{kst} and~\cite[Theorem 26]{quick2020representability} for an analogous proof of the product rule for local degrees at rational points.

\begin{proposition}[Product rule]
\label{prop:product-rule}
Let $f,g:\A^n_k\rightarrow \A^n_k$ be two endomorphisms of $\A^n_k$ with only isolated zeros.
Then $\deg^{\A^1}(f\circ g)=\deg^{\A^1}(f)\cdot\deg^{\A^1}(g)$.
\end{proposition}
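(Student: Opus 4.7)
The plan is to introduce an auxiliary endomorphism $H : \A^{2n}_k \to \A^{2n}_k$ defined by $H(x, y) = (g(x) - y, f(y))$ and to compute $\deg^{\A^1}(H)$ in two complementary ways. Ordering variables as $(x_1, \ldots, x_n, y_1, \ldots, y_n)$, the Bezout matrix of $H$ has the block upper-triangular form
\[\Delta^H = \begin{pmatrix} \Delta^g(X, Y) & -I_n \\ 0 & \Delta^f(U, V) \end{pmatrix},\]
so that $\det(\Delta^H) = \det(\Delta^g) \cdot \det(\Delta^f)$ separates as a product of B\'ezoutians in disjoint sets of variables.

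First, I would show $\deg^{\A^1}(H) = \deg^{\A^1}(f) \cdot \deg^{\A^1}(g)$. Factoring $H = B \circ L \circ A$ with $A(x, y) = (g(x), y)$, $B(u, v) = (u, f(v))$, and $L = \begin{pmatrix} I_n & -I_n \\ 0 & I_n \end{pmatrix} \in M_{2n}(k)$ a constant unipotent matrix, Lemma~\ref{lemma:unipotent-transformation} gives $\deg^{\A^1}(H) = \deg^{\A^1}(B \circ A)$. The composite $(B \circ A)(x, y) = (g(x), f(y))$ has global algebra $Q_g \otimes_k Q_f$, and a product basis $\{a_i c_j\}$ (with $\{a_i\}$ a basis of $Q_g$ and $\{c_j\}$ a basis of $Q_f$) realizes its B\'ezoutian Gram matrix as the Kronecker product of the Gram matrices of $g$ and $f$; this yields $\deg^{\A^1}(B \circ A) = \deg^{\A^1}(g) \cdot \deg^{\A^1}(f)$ in $\GW(k)$.

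Second, I would show $\deg^{\A^1}(H) = \deg^{\A^1}(f \circ g)$ by relating $H$ to the stabilized composition $P(x, y) := (f(g(x)), y)$. A direct block B\'ezoutian computation gives $\deg^{\A^1}(P) = \deg^{\A^1}(f \circ g)$, since the algebra of $P$ is $Q_{f \circ g}$ and the $y$-block contributes a trivial factor of $\langle 1 \rangle$. To link $H$ with $P$, I first deform $H$ to $H'(x, y) := (g(x) - y, f(g(x)))$ via the family $H_t(x, y) = (g(x) - y, f(y + t(g(x) - y)))$, whose zero locus $\{(p, g(p)) : (f \circ g)(p) = 0\}$ is constant and isolated for all $t$, so that Harder's theorem (as in the proof of Lemma~\ref{lemma:unipotent-transformation}) preserves the degree. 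Next, swapping the two $n$-blocks of the target (a permutation with sign $(-1)^n$) produces $(f(g(x)), g(x) - y)$ with degree differing by $\langle (-1)^n \rangle$ by Lemma~\ref{lemma:composition-linear-transformation}; then I deform this to $(f(g(x)), -y)$ via the family $(f(g(x)), (1-t) g(x) - y)$, whose zeros $(x_0, (1-t) g(x_0))$ remain isolated for all $t$; finally negating the $y$-block (determinant $(-1)^n$) reaches $P$. The two sign contributions $\langle (-1)^n \rangle$ cancel, giving $\deg^{\A^1}(H) = \deg^{\A^1}(P) = \deg^{\A^1}(f \circ g)$.

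Combining the two computations yields $\deg^{\A^1}(f \circ g) = \deg^{\A^1}(f) \cdot \deg^{\A^1}(g)$. The main obstacle will be verifying that each of the $\A^1$-families above produces only isolated zeros for every value of the deformation parameter, so that the associated Scheja--Storch forms over $k[t]$ are well-defined free modules of constant rank and Harder's theorem applies as in Lemma~\ref{lemma:unipotent-transformation}. Careful sign bookkeeping through the linear coordinate transformations is also essential to confirm that the $\langle (-1)^n \rangle$ factors cancel in pairs.
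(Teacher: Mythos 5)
Your proposal is correct, and it shares the core machinery of the paper's proof (Lemma~\ref{lemma:unipotent-transformation}, Lemma~\ref{lemma:composition-linear-transformation}, and the Kronecker-product structure of the B\'ezoutian of a product map), but it takes a genuinely longer route through the auxiliary map $H(x,y)=(g(x)-y,f(y))$. The paper's proof starts from the observation that the stabilizations $\tilde f(x,y)=(f(x),y)$ and $\tilde g(x,y)=(g(x),y)$ satisfy $\tilde f\circ\tilde g=(f(g(x)),y)$ \emph{on the nose}, so $\deg^{\A^1}(f\circ g)=\deg^{\A^1}(\tilde f\circ\tilde g)$ costs nothing; it then passes from $\tilde f\circ\tilde g$ to $g\times f$ by sandwiching three constant unipotent matrices $L_1,L_2,L_1$ between $\tilde f$ and $\tilde g$ and post-composing by a permutation $A$ with $\det A=1$. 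You instead prove two separate identities $\deg^{\A^1}(H)=\deg^{\A^1}(g\times f)$ and $\deg^{\A^1}(H)=\deg^{\A^1}(f\circ g)$. Your first identity is essentially the same step as the paper's, in the convenient single-factorization form $H=B\circ L\circ A$; but your second identity reimplements the Harder-type deformation argument that is otherwise encapsulated inside Lemma~\ref{lemma:unipotent-transformation}, invoking it directly twice (with the two families $H_t$ and $(f(g(x)),(1-t)g(x)-y)$) and twice using Lemma~\ref{lemma:composition-linear-transformation} to cancel a pair of $\gw{(-1)^n}$ factors. Your verification that each family yields a $k[t]$-algebra of constant finite rank is correct --- in both cases the relation $y=g(x)$ (resp.\ $y=(1-t)g(x)$) collapses the algebra to $k[t]\otimes_k Q_{f\circ g}$ --- so the argument does go through. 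One small remark: the opening observation that $\det(\Delta^H)$ factors as $\det(\Delta^g)\cdot\det(\Delta^f)$ is good intuition but is not load-bearing, since the global algebra of $H$ is $Q_{f\circ g}$ rather than $Q_g\otimes_k Q_f$, so one cannot read the Kronecker product of Gram matrices off of $\Delta^H$ directly; you correctly do that computation for $B\circ A=g\times f$ instead. Overall the proposal is sound; the paper's proof is shorter because $\tilde f\circ\tilde g$ is already the stabilization of $f\circ g$, so there is no separate ``connect $H$ to $f\circ g$'' step to carry out.
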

\begin{proof}
We follow the proofs of \cite[Theorem 13]{kst} and \cite[Theorem 26]{quick2020representability}. The general idea is to mimic the Eckmann--Hilton argument~\cite{EH62}. Let $x:=(x_1,\ldots,x_n)$ and $y:=(y_1,\ldots,y_n)$. Define $\tilde{f},\tilde{g}:\A^n\times \A^n\rightarrow \A^n\times \A^n$ by $\tilde{f}(x,y)=(f(x),y)$ and $\tilde{g}(x,y)=(g(x),y)$. Since $(f\circ g,y)$ and $\tilde{f}\circ\tilde{g}$ define the same ideal in $k[x,y]$ and have the same B\'ezoutian, we have $\deg^{\A^1}(f\circ g)=\deg^{\A^1}(\tilde{f}\circ\tilde{g})$ by Proposition~\ref{prop:same ideals}.

Let $g\times f:\A^n_k\times\A^n_k\to\A^n_k\times\A^n_k$ be given by $(g\times f)(x,y)=(g(x),f(y))$. Using Lemma \ref{lemma:unipotent-transformation} repeatedly, we will show that $\deg^{\A^1}(\tilde{f}\circ \tilde{g})= \deg^{\A^1}(g\times f)$. Let $I_n$ be the $n\times n$ identity matrix, and let
\[
L_1=\begin{pmatrix}
I_n & 0\\
-I_n & I_n
\end{pmatrix},\quad
L_2=\begin{pmatrix}
I_n & I_n\\
0 &I_n
\end{pmatrix},\quad
A=\begin{pmatrix}
0 & -I_n\\
I_n & 0\end{pmatrix}.\]
Since $L_1$ and $L_2$ are unipotent, Lemma~\ref{lemma:unipotent-transformation} implies that 
\begin{align*}
    \deg^{\A^1}(\tilde{f}\circ\tilde{g})&=\deg^{\A^1}(\tilde{f}\circ L_1\circ\tilde{g})\\
    &=\deg^{\A^1}(\tilde{f}\circ L_2\circ (L_1\circ \tilde{g}))\\
    &=\deg^{\A^1}(\tilde{f}\circ L_1\circ (L_2\circ L_1\circ \tilde{g})).
\end{align*}
One can check that $A\circ\tilde{f}\circ L_1\circ L_2\circ L_1\circ\tilde{g}=g\times f$. By Lemma~\ref{lemma:composition-linear-transformation}, we have 
\begin{align*}
\langle\det A\rangle\cdot\deg^{\A^1}(\tilde{f}\circ\tilde{g})&=\langle\det A\rangle\cdot\deg^{\A^1}(\tilde{f}\circ L_1\circ L_2\circ L_1\circ \tilde{g})\\
&=\deg^{\A^1}(g\times f).
\end{align*}
Since $\det A=1$, it just remains to show that $\deg^{\A^1}(g\times f)=\deg^{\A^1}(g)\cdot\deg^{\A^1}(f)$. Let $a_1,\ldots,a_m$ be a basis for $\frac{k[x_1,\ldots,x_n]}{(g_1,\ldots,g_n)}$ and $a_1',\ldots,a_{m'}'$ be a basis for $\frac{k[y_1,\ldots,y_n]}{(f_1,\ldots,f_n)}$.
Write $\Bez(g)=\sum_{i,j=1}^mB_{ij}a_i\otimes a_j$ and $\Bez(f)=\sum_{i,j=1}^{m'}B'_{ij}a_i'\otimes a_j'$. By Theorem~\ref{thm:main-thm}, $(B_{ij})$ and $(B'_{ij})$ are the Gram matrices for $\deg^{\A^1}(g)$ and $\deg^{\A^1}(f)$, respectively. Next, we have $\Bez(g\times f)=\Bez(g)\cdot \Bez(f)$, since \[(\Delta_{ij}^{g\times f})=\begin{pmatrix}
(\Delta_{ij}^g) & 0 \\ 
0 & (\Delta_{ij}^f)
\end{pmatrix}.\]

Note that $\{a_i(x)a_{i'}'(y)\}_{i,i'=1}^{m,m'}$ is a basis of $\frac{k[x_1,\ldots,x_n,y_1,\ldots,y_n]}{(g_1(x),\ldots,g_n(x),f_1(y),\ldots,f_n(y))}$. In this basis, we have
\begin{align*}
    \Bez(g)\cdot \Bez(f)=\sum_{i,j=1}^m\sum_{i',j'=1}^{m'}B_{ij}B'_{i'j'}a_ia_{i'}'\otimes a_ja_{j'}',
\end{align*}
so the Gram matrix of $\deg^{\A^1}(g\times f)$ is the tensor product $(B_{ij})\otimes (B_{ij}')$. We thus
 we have an equality $\deg^{\A^1}(g\times f)=\deg^{\A^1}(g)\cdot \deg^{\A^1}(f)$ in $\GW(k)$.
\end{proof}

\section{Examples}\label{sec:examples}
We now give a few remarks and examples about computing the B\'ezoutian.

\begin{remark} It is not always the case that the determinant $\det(\Delta_{ij})\in k[X,Y]$ is symmetric. For example, consider the morphism $f: \A^2_k \to \A^2_k$ sending $\left( x_1,x_2 \right)\mapsto \left( x_1x_2, x_1 + x_2 \right)$. Then the B\'{e}zoutian is given by
\begin{align*}
    \Bez(f) &= \det \begin{pmatrix} X_2 & Y_1 \\ 1 & 1 \end{pmatrix}  = X_2 - Y_1.
\end{align*}
However, the B\'{e}zoutian is symmetric once we pass to the quotient $\frac{k[X,Y]}{(f(X),f(Y))}$~\cite[2.12]{BCRS}. Continuing the present example, let $\left\{ 1,x_2 \right\}$ be a basis for the algebra $Q = k[x_1,x_2]/(x_1x_2, x_1 + x_2)$. Then we have that
\begin{align*}
    \Bez(f) = X_2 - Y_1 = X_2 + Y_2, 
\end{align*}
which is symmetric. Moreover, the B\'ezoutian bilinear form is represented by $\left(\begin{smallmatrix} 0 & 1 \\ 1 & 0 \end{smallmatrix}\right)$, so $\deg^{\A^1}(f) = \mathbb{H}$.
\end{remark}

\begin{example} Let $k = \F_p(t)$, where $p$ is an odd prime, and consider the endomorphism of the affine plane given by
\begin{align*}
    f:\Spec \F_p(t)[x_1,x_2] &\to \Spec \F_p(t)[x_1,x_2] \\
    (x_1,x_2) &\mapsto \left( x_1^p - t, x_1x_2 \right).
\end{align*}
As the residue field of the zero of $f$ is not separable over $k$, existing strategies for computing the local $\A^1$-degree are insufficient. Our results allow us to compute this $\A^1$-degree. The B\'{e}zoutian is given by
\begin{align*}
    \Bez(f) &= \det \begin{pmatrix} \frac{X_1^p - Y_1^p}{X_1 - Y_1} & 0 \\ X_2 & Y_1 \end{pmatrix} \\
    &= X_1^{p-1}Y_1 + X_1^{p-2} Y_1^2 + \ldots + X_1Y_1^{p-1}+Y_1^p\\
    &=X_1^{p-1}Y_1 + X_1^{p-2} Y_1^2 + \ldots + X_1Y_1^{p-1}+t.
\end{align*}
In the basis $\{1,x_1,\ldots,x_1^{p-1}\}$ of $Q$, the B\'ezoutian bilinear form consists of a $t$ in the upper left corner and a 1 in each entry just below the anti-diagonal. Thus
\begin{align*}
    \deg^{\A^1}(f) = \deg^{\A^1}_{(t^{1/p},0)} (f) = \langle t\rangle+\frac{p-1}{2}\mathbb{H}.
\end{align*}
\end{example}

\begin{example}
Let $f_1=(x_1-1)x_1x_2$ and $f_2=(ax_1^2-bx_2^2)$ for some $a,b\in k^\times$ with $\tfrac{a}{b}$ not a square in $k$. Then $f=(f_1,f_2)$ has isolated zeros at $\mathfrak{m}:=(x_1-0,x_2-0)$ and $\mathfrak{n}:=(x_1-1,x_2^2-\tfrac{a}{b})$. We will use B\'ezoutians to compute the local degrees $\deg_\mathfrak{m}^{\A^1}(f)$ and $\deg_\mathfrak{n}^{\A^1}(f)$, as well as the global degree $\deg^{\A^1}(f)$. Let
\[Q = \frac{k[x_1,x_2]}{((x_1-1)x_1x_2, ax_1^2 - bx_2^2)}.\]

We first compute the global B\'{e}zoutian as
\begin{align*}
    \Bez(f)=&\det\begin{pmatrix}(X_1+Y_1-1)X_2&a(X_1+Y_1)\\ Y_1^2-Y_1&-b(X_2+Y_2)\end{pmatrix}\\
    =& -a(X_1Y_1^2-X_1Y_1+Y_1^3-Y_1^2)\\
    &-b(X_1X_2^2+X_2^2Y_1-X_2^2+X_1X_2Y_2+X_2Y_1Y_2-X_2Y_2).
\end{align*}
In the basis $\{ 1,x_1,x_2,x_1^2,x_1x_2, x_1^2\}$ of $Q$, the B\'ezoutian is given by
\begin{align*}
    \Bez(f) =& -a \left( X_1 Y_1^2 - X_1 Y_1 + Y_1^3 - Y_1^2 + X_1^3 +  X_1^2 Y_1 - X_1^2 \right) \\
    &- b ( X_1 X_2 Y_2 + X_2 Y_1 Y_2 - X_2 Y_2).
\end{align*}
We now write the B\'ezoutian matrix given by the coefficients of $\Bez(f)$.
\begin{center}
    \begin{tabular}{r | c c c c c c}
    & $1$ & $X_1$ & $X_2$ & $X_1^2$ & $X_1 X_2$ & $X_1^3$ \\
    \hline
    1 & 0 & 0 & 0 & $a$ & 0 & $-a$\\
    $Y_1$ & 0 & $a$ & 0 & $-a$ & 0 & 0\\
    $Y_2$ & 0 & 0 & $b$ & 0 & $-b$ & 0\\
    $Y_1^2$ & $a$ & $-a$ & 0 & 0 & 0 & 0\\
    $Y_1 Y_2$ & 0 & 0 & $-b$ & 0 & 0 & 0\\
    $Y_1^3$ & $-a$ & 0 & 0 & 0 & 0 & 0
    \end{tabular}
\end{center}
One may check (e.g. with a computer) that this is equal to $3\mathbb{H}$ in $\GW(k)$.

In $Q_\mathfrak{m}$, we have that $x_1^2 x_2 = x_1 x_2 = 0$ and $x_1^3 = \frac{b}{a}x_1 x_2^2 = 0$. In the basis $\{1,x_1,x_2,x_1^2\}$ of $Q_\mathfrak{m}$, the global B\'ezoutian reduces to
\begin{align*}
    \Bez(f) &= -a \left( X_1 Y_1^2 - X_1 Y_1 - Y_1^2 + X_1^2 Y_1 - X_1^2 \right) + bX_2 Y_2
\end{align*}
We thus get the B\'{e}zoutian matrix at $\mathfrak{m}$.
\begin{center}
    \begin{tabular}{r | c c c c}
    & $1$ & $X_1$ & $X_2$ & $X_1^2$ \\
    \hline
    1 & 0 & 0 & 0 & $a$ \\ 
    $Y_1$ & 0 & $a$ & 0 & $-a$ \\
    $Y_2$ & 0 & 0 & $b$ & 0 \\
    $Y_1^2$ & $a$ & $-a$ & 0 & 0 \\
    \end{tabular}
\end{center}
This is $\mathbb{H} + \langle a,b\rangle$ in $\GW(k)$.

In $Q_\mathfrak{n}$, we have $x_1 = 1$. In the basis $\{1,x_2\}$ for $Q_\mathfrak{n}$, the B\'ezoutian reduces to
\begin{align*}
    \Bez(f) &= -a -bX_2 Y_2.
\end{align*}
We can then write the B\'ezoutian matrix at $\mathfrak{n}$.
\begin{center}
    \begin{tabular}{r | c c}
    & $1$ & $X_2$ \\
    \hline
    $1$ & $-a$ & 0 \\
    $Y_2$ & 0 & $-b$
    \end{tabular}
\end{center}
This is $\langle -a, -b\rangle$ in $\GW(k)$. Note that $\langle -a,-b\rangle$ need not be equal to $\mathbb{H}$. However, this does not contradict \cite[Theorem 2]{quick2020representability}, since $\mathfrak{n}$ is a non-rational point. 

Putting these computations together, we see that
\begin{align*}
    \deg_\mathfrak{m}^{\A^1}(f) + \deg_\mathfrak{n}^{\A^1}(f) &= \mathbb{H} + \left\langle a,b \right\rangle + \left\langle -a,-b \right\rangle = 3 \mathbb{H} = \deg^{\A^1}(f).
\end{align*}
\end{example}

\section{Application: the $\A^1$-Euler characteristic of Grassmannians}\label{sec:Gr}
As an application of Theorem~\ref{thm:main-thm}, we compute the $\A^1$-Euler characteristic of various low-dimensional Grassmannians in Example~\ref{ex:grassmannian} and Figure~\ref{fig:Euler-char-grassmannians}. These computations suggest a recursive formula for the $\A^1$-Euler characteristic of an arbitrary Grassmannian, which we prove in Theorem~\ref{thm:grassmannian}. This formula is analogous to the recursive formulas for the Euler characteristics of complex and real Grassmannians. Theorem~\ref{thm:grassmannian} is probably well-known, and the proof is essentially a combination of results of Hoyois, Levine, and Bachmann--Wickelgren.

\subsection{The $\A^1$-Euler characteristic}
Let $X$ be a smooth, proper $k$-variety of dimension $n$ with structure map $\pi:X\rightarrow \Spec k$. Let $p:T_X\rightarrow X$ denote the tangent bundle of $X$.
The $\A^1$-Euler characteristic $\chi^{\A^1}(X)\in\GW(k)$ is a refinement of the classical Euler characteristic. In particular, if $k=\mathbb{R}$, then $\operatorname{rank}\chi^{\A^1}(X)=\chi(X(\mathbb{C}))$ and $\operatorname{sgn}\chi^{\A^1}(X)=\chi(X(\mathbb{R}))$.
There exist several equivalent definitions of the $\A^1$-Euler characteristic \cite{levine2020aspects,LR,ABOWZ}. 
For example, we may define $\chi^{\A^1}(X)$ to be the $\pi$-pushforward of the $\A^1$-Euler class 
\[e(T_X):=z^*z_*1_X\in \widetilde{\operatorname{CH}}^n(X,\omega_{X/k}),\]
of the tangent bundle \cite{levine2020aspects}, where $z:X\to T_X$ is the zero section and $\widetilde{\operatorname{CH}}^d(X,\omega_{X/k})$ is the Chow--Witt group defined by Barge--Morel \cite{BM,fasel}. That is,
\[\chi^{\A^1}(X):=\pi_*(e(T_X))\in\widetilde{\operatorname{CH}}^0(\Spec k)=\GW(k).\]

Analogous to the classical case \cite{MilnorDiffViewpoint}, the $\A^1$-Euler characteristic can be computed as the sum of local $\A^1$-degrees at the zeros of a general section of the tangent bundle using the work of Kass--Wickelgren \cite{BW,KW-cubic,levine2020aspects}. We now describe this process.
Let $\sigma$ be a section of $T_X$ which only has isolated zeros. For a zero $x$ of $\sigma$, choose Nisnevich coordinates\footnote{\textit{Nisnevich coordinates} consist of an open neighborhood $U$ of $x$ and an \'etale map $\psi:U\to\A^n_k$ that induces an isomorphism of residue fields $k(x)\cong k(\psi(x))$ \cite[Definition 18]{KW-cubic}.} $\psi:U\to\A^n_k$ around $x$. Since $\psi$ is \'{e}tale, it induces an isomorphism of tangent spaces and thus yields local coordinates around $x$. Shrinking $U$ if necessary, we can trivialize $T_X\vert_U\cong U\times \A^n_k$.
The chosen Nisnevich coordinates $(\psi,U)$ and trivialization $\tau:T_X\vert_U\cong U\times\A^n_k$ each define distinguished elements $d_\psi,d_\tau\in\det T_X\vert_U$. In turn, this yields a distinguished section $d$ of $\sheafHom(\det T_X\vert_U,\det T_X\vert_U)$, which is defined by $d_\psi\mapsto d_\tau$.
We say that a trivialization $\tau$ is \emph{compatible} with the chosen coordinates $(\psi,U)$ if the image of the distinguished section $d$ under the canonical isomorphism $\rho:\sheafHom(\det T_X\vert_U,\det T_X\vert_U)\cong \mathcal{O}_U$ is a square \cite[Definition 21]{KW-cubic}.

Given a compatible trivialization $\tau:T_X\vert_U\cong U\times \A^n_k$, the section $\sigma$ trivializes to $\sigma:U\rightarrow \A^n_k$.
We can then define the \emph{local index} $\ind_x\sigma$ at $x$ to be the $\A^1$-degree of the composite
\begin{align*}
    \frac{\P^n_k}{\P^{n-1}_k}\to\frac{\P^n_k}{\P^n_k\backslash\{\psi(x)\}}\cong\frac{\A^n_k}{\A^n_k\backslash\{\psi(x)\}}\cong\frac{U}{U\backslash\{x\}}\xto{\sigma}\frac{\A^n_k}{\A^n_k\backslash\{0\}}\cong\frac{\P^n_k}{\P^{n-1}_k}.
\end{align*}
Here, the first map is the collapse map, the second map is excision, the third map is induced by the Nisnevich coordinates $(\psi,U)$, and the fifth map is purity (see e.g.~\cite[Definition 7.1]{BW}). By \cite[Theorem 3]{KW-cubic}, the $\A^1$-Euler characteristic is then the sum of local indices
\[\chi^{\A^1}(X)=\sum_{x\in\sigma^{-1}(0)}\ind_x\sigma\in \GW(k).\]
By Theorem~\ref{thm:main-thm}, we may thus compute the $\A^1$-Euler characteristic by computing the global B\'ezoutian bilinear form of an appropriate map $f:\A^n_k\to\A^n_k$.
\begin{remark}
If all the zeros of $f:\A^n_k\rightarrow \A^n_k$ are simple, then each local ring $Q_{\mathfrak{m}}$ in the decomposition of $Q=\frac{k[x_1,\ldots,x_n]}{(f_1,\ldots,f_n)}=Q_{\mathfrak{m_1}}\times\ldots\times Q_{\mathfrak{m}_s}$ is equal to the residue field of the corresponding zero. If each residue field $Q_{\mathfrak{m}_i}$ is a separable extension of $k$, then the $\A^1$-degree of $f$ is equal to sum of the scaled trace forms $\operatorname{Tr}_{Q_{\mathfrak{m}_i}/k}(\gw{J(f)|_{\mathfrak{m}_i}})$ (see e.g. \cite[Definition 1.2]{trace-paper}), where $J(f)|_{\mathfrak{m}_i}$ is the determinant of the Jacobian of $f$ evaluated at the point $\mathfrak{m}_i$. In \cite{pauli2020computing} the last named author uses the scaled trace form for several $\A^1$-Euler number computations. However, Theorem \ref{thm:main-thm} yields a formula for $\deg^{\A^1}(f)$ for any $f$ with only isolated zeros and without any restriction on the residue field of each zero. Moreover, we can even compute $\deg^{\A^1}(f)$ without solving for the zero locus of $f$.
\end{remark}

\subsection{The $\A^1$-Euler characteristic of Grassmannians}
Let $G:=\Gr_k(r,n)$ be the Grassmannian of $r$-planes in $k^n$. In order to compute $\chi^{\A^1}(G)$, we first need to describe Nisnevich coordinates and compatible trivializations for $G$ and $T_{G}$. We then need to choose a convenient section of $T_{G}$ and describe the resulting endomorphism $\A^{r(n-r)}_k$. The tangent bundle $T_{G}\to G$ is isomorphic to $p: \sheafHom(\mathcal{S},\mathcal{Q})\rightarrow G,$ where $\mathcal{S}\rightarrow G$ and $\mathcal{Q}\rightarrow G$ are the universal sub and quotient bundles.

We now describe Nisnevich coordinates on $G$ and a compatible trivialization of $T_{G}$, following \cite{SW}. Let $d=r(n-r)$ be the dimension of $G$, and let $\{e_1,\ldots,e_n\}$ be the standard basis of $k^n$. Let $\A^d_k=\Spec{k[\{x_{i,j}\}_{i,j=1}^{r,n-r}]}\cong U\subset G$ be the open affine subset consisting of the $r$-planes
\begin{align*}
H(\{x_{i,j}\}_{i,j=1}^{r,n-r}):=\operatorname{span}\left\{e_{n-r+i}+\sum_{j=1}^{n-r} x_{i,j}e_j\right\}_{i=1}^{r}.
\end{align*}
The map $\psi:U\to\A^d_k$ given by $\psi\left(H(\{x_{i,j}\}_{i,j=1}^{r,n-r})\right)=(\{x_{i,j}\}_{i,j=1}^{n-r,r})$ yields Nisnevich coordinates $(\psi,U)$ centered at $\psi\left(\operatorname{span}\{e_{n-r+1},\ldots,e_n\}\right)=(0,\ldots,0)$. For the trivialization of $T_{G}\vert_U$, let
\[
    \til{e}_i=\begin{cases}e_i & i\leq n-r,\\
    e_i+\sum_{j=1}^{n-r}x_{i-(n-r),j}e_j & i\geq n-r+1.\end{cases}
\]
Then $\{\til{e}_1,\ldots,\til{e}_n\}$ is a basis for $k^n$, and we denote the dual basis by $\{\til{\phi}_1,\ldots,\til{\phi}_n\}$. Over $U$, the bundles $\mathcal{S}^*$ and $\mathcal{Q}$ are trivialized by $\{\til{\phi}_{n-r+1},\ldots,\til{\phi}_n\}$ and $\{\til{e}_1,\ldots,\til{e}_{n-r}\}$, respectively. Since 
\[T_{G}\cong\sheafHom(\mathcal{S},\mathcal{Q})\cong\mathcal{S}^*\otimes\mathcal{Q},\]
we get a trivialization of $T_{G}\vert_U$ given by $\{\til{\phi}_{n-r+i}\otimes\til{e}_j\}_{i,j=1}^{r,n-r}$. By construction, our Nisnevich coordinates $(\psi,U)$ induce this local trivialization of $T_G$. It follows that the distinguished element of $\Hom(\det T_G|_U,\det T_G|_U)$ sending the distinguished element of $\det T_G|_U$ (determined by the Nisnevich coordinates) to the distinguished element of $T_G|_U$ (determined by our local trivialization) is just the identity, which is a square.

Next, we describe sections of $T_{G}\to G$ and the resulting endomorphisms $\A^d_k\to\A^d_k$. Let $\{\phi_1,\ldots,\phi_n\}$ be the dual basis of the standard basis $\{e_1,\ldots,e_n\}$ of $k^n$. A homogeneous degree 1 polynomial $\alpha\in k[\phi_1,\ldots,\phi_n]$ gives rise to a section $s$ of $\mathcal{S}^*$, defined by evaluating $\alpha$. In particular, given a vector $t=\sum_{i=1}^n t_i\til{e}_i$ in $H(\{x_{i,j}\}_{i,j=1}^{r,n-r})$, we use the dual change of basis
\[
    \phi_j=\begin{cases}
    \til{\phi}_j+\sum_{i=1}^rx_{i,j}\til{\phi}_{n-r+i} & j\leq n-r,\\
    \til{\phi}_j & j\geq n-r+1
    \end{cases}
\]
to set
\begin{align*}
    s(t)&=\alpha\left(t_1+\sum_{i=1}^r x_{i,1}t_{n-r+i},\ldots,t_{n-r}+\sum_{i=1}^r x_{i,n-r}t_{n-r+i},t_{n-r+1},\ldots,t_n\right).
\end{align*}
Note that $t_1=\cdots=t_{n-r}=0$ if and only if $t\in H(\{x_{i,j}\}_{i,j=1}^{r,n-r})$, so $s(t)\in k[t_{n-r+1},\ldots,t_n]$. Taking $n$ sections $s_1,\ldots,s_n$ of $\mathcal{S}^*$, we get a section of $T_{G}\cong\sheafHom(\mathcal{S},\mathcal{Q})$ given by
\[\mathcal{S}\xrightarrow{(s_1,\ldots,s_n)}\A^n_k\rightarrow\mathcal{Q},\]
where the second map is quotienting by $\{\til{e}_{n-r+1},\ldots,\til{e}_n\}$. We obtain our map $\A^d_k\to\A^d_k$ by applying the trivializations $\{\til{\phi}_{n-r+i}\otimes\til{e}_j\}_{i,j=1}^{r,n-r}$ of $T_{G}$. Explicitly, take $n$ sections $s_1,\ldots,s_n$ of $\mathcal{S}^*$. Since $e_i=\til{e}_i-\sum_{j=1}^{n-r}x_{i-(n-r),j}e_j$ for $i>n-r$, we have
\[s_je_j\equiv s_je_j-\sum_{i=1}^r x_{i,j}s_{n-r+i}e_j\Mod(\til{e}_{n-r+1},\ldots,\til{e}_n),\]
for all $j\leq n-r$. Recall that $e_j=\til{e}_j$ for $j\leq n-r$. The coordinate of $\A^d_k\to\A^d_k$ corresponding to $\til{\phi}_{n-r+i}\otimes\til{e}_j$ is thus the coefficient of $t_{n-r+i}$ in $s_j(t)-\sum_{\ell=1}^rx_{\ell,j}s_{n-r+\ell}(t)$.

For a general section $\sigma$ of $p:T_G\to G$, the finitely many zeros of $\sigma$ will all lie in $U$. In this case, the $\A^1$-Euler characteristic of $G$ is equal to the global $\A^1$-degree of the resulting map $\A_k^d\rightarrow \A^d_k$, which can computed using the B\'{e}zoutian. 

\begin{example}[$\Gr_k(2,4)$]\label{ex:grassmannian}
Let
\begin{align*}
    \alpha_1&=\phi_2=\til{\phi}_2+x_{1,2}\til{\phi}_3+x_{2,2}\til{\phi}_4,\\
    \alpha_2&=\phi_3=\til{\phi}_3,\\
    \alpha_3&=\phi_4=\til{\phi}_4,\\
    \alpha_4&=\phi_1=\til{\phi}_1+x_{1,1}\til{\phi}_3+x_{2,1}\til{\phi}_4.
\end{align*}
Evaluating at $t=(0,0,t_3,t_4)$ in the basis $\{\til{e}_i\}$, we have
\begin{align*}
    s_1&=x_{1,2}t_3+x_{2,2}t_4,\\
    s_2&=t_3,\\
    s_3&=t_4,\\
    s_4&=x_{1,1}t_3+x_{2,1}t_4.
\end{align*}
It remains to read off the coefficients of $t_3$ and $t_4$ of
\begin{align*}
    s_1-x_{1,1}s_3-x_{2,1}s_4&=(x_{1,2}-x_{1,1}x_{2,1})t_3+(x_{2,2}-x_{1,1}-x_{2,1}^2)t_4,\\
    s_2-x_{1,2}s_3-x_{2,2}s_4&=(1-x_{1,1}x_{2,2})t_3+(-x_{1,2}-x_{2,1}x_{2,2})t_4.
\end{align*}
We thus have our endomorphism $\sigma:\A^{4}_k\to\A^{4}_k$ defined by
\[\sigma=(x_{1,2}-x_{1,1}x_{2,1},x_{2,2}-x_{1,1}-x_{2,1}^2,1-x_{1,1}x_{2,2},-x_{1,2}-x_{2,1}x_{2,2}).\]
Using the Sage implementation of the B\'ezoutian formula for the $\A^1$-degree~\cite{code}, we can calculate  $\chi^{\A^1}(\Gr_k(2,4))=\deg^{\A^1}(\sigma)=2\HH+\gw{1,1}$.
\end{example}

Using a computer, we performed computations analogous to Example~\ref{ex:grassmannian} for $r\leq 5$ and $n\leq 7$. These $\A^1$-Euler characteristics of Grassmannians are recorded in Figure~\ref{fig:Euler-char-grassmannians}.

\begin{figure}
 \begin{tabular}{|c | c c c c c|} 
 \hline
 \tiny{\diagbox{$n$}{$r$}} & 1 & 2 & 3 & 4& 5\\ 
 \hline
 2 &  $\HH$ &$\langle 1\rangle$  &  &&\\ 
 
 3 & $\HH+\langle 1\rangle$ & $\HH+\langle 1\rangle$ &$\langle 1\rangle$& &\\
 
 4 & $2\HH$ & $2\HH+\langle 1, 1\rangle$ & $2\HH$&$\langle 1\rangle$ &\\
 
 5 & $2\HH+\langle 1\rangle$ & $4\HH+\langle1,1\rangle$ & $4\HH+\langle 1,1\rangle$ &$2\HH+\langle 1\rangle$&$\langle 1\rangle$\\

 6 & $3\HH$ & $6\HH+\langle1,1,1\rangle$ & $10\HH$ &$6\HH+\langle1,1,1\rangle$& $3\HH$\\
 
  7 & $3\HH+\langle1\rangle$ & $9\HH+\langle1,1,1\rangle$ & $16\HH+\langle1,1,1\rangle$ &$16\HH+\langle1,1,1\rangle$ & $9\HH+\langle1,1,1\rangle$ \\
 \hline

\end{tabular}
\caption{More examples of $\chi^{\A^1}\left(\Gr_k(r,n)\right)$}
\label{fig:Euler-char-grassmannians}
\end{figure}

Recall that the Euler characteristics of real and complex Grassmannians are given by binomial coefficients. In particular, these Euler characteristics satisfy certain recurrence relations related to Pascal's rule. The computations in Figure~\ref{fig:Euler-char-grassmannians} indicate that an analogous recurrence relation is true for the $\A^1$-Euler characteristic of Grassmannians over an arbitrary field. In fact, this recurrence relation is a direct consequence of a result of Levine~\cite{levine2020aspects}.

\begin{proposition}\label{prop:Gr-recurrence} 
Let $1\leq r<n$ be integers. Then
\[
    \chi^{\A^1}\left(\Gr_k(r,n)\right)=\chi^{\A^1}\left(\Gr_k(r-1,n-1)\right)+\gw{-1}^r\chi^{\A^1}\left(\Gr_k(r,n-1)\right).
\]
\end{proposition}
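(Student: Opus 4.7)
The plan is to combine a standard closed-open decomposition of the Grassmannian with the additivity of the $\A^1$-Euler characteristic established by Levine~\cite{levine2020aspects}. Fix a line $\ell = ke_n \subset k^n$ and a complementary hyperplane $W \cong k^{n-1}$. Let $Z \subset \Gr_k(r,n)$ be the closed subvariety of $r$-planes containing $\ell$ and $U := \Gr_k(r,n) \setminus Z$ its open complement. The quotient map $V \mapsto V/\ell$ identifies $Z$ with $\Gr_k(r-1, n-1)$. For any $V \in U$ we have $V \cap \ell = 0$, so the projection $\pi : k^n \twoheadrightarrow W$ is injective on $V$, and the assignment $p : V \mapsto \pi(V)$ defines a morphism $U \to \Gr_k(r, n-1)$ that is a Zariski locally trivial rank-$r$ vector bundle (with fiber over $V'$ canonically $\Hom_k(V', \ell) \cong \A^r_k$).

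Applying additivity of the compactly supported $\A^1$-Euler characteristic, together with the equality $\chi^{\A^1}_c = \chi^{\A^1}$ for smooth proper varieties, we obtain
\begin{align*}
\chi^{\A^1}(\Gr_k(r,n)) = \chi^{\A^1}(\Gr_k(r-1, n-1)) + \chi^{\A^1}_c(U).
\end{align*}
To compute $\chi^{\A^1}_c(U)$, form the projective completion $\overline{U} \to \Gr_k(r, n-1)$, a Zariski locally trivial $\P^r_k$-bundle whose complement $\overline{U} \setminus U$ is the hyperplane at infinity, a Zariski locally trivial $\P^{r-1}_k$-bundle over $\Gr_k(r, n-1)$. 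Multiplicativity of $\chi^{\A^1}$ for Zariski locally trivial projective bundles, combined with additivity for the stratification $\overline{U} = U \sqcup (\overline{U} \setminus U)$, yields
\begin{align*}
\chi^{\A^1}_c(U) = \chi^{\A^1}(\Gr_k(r, n-1)) \cdot \bigl( \chi^{\A^1}(\P^r_k) - \chi^{\A^1}(\P^{r-1}_k) \bigr).
\end{align*}
The analogous decomposition $\P^m_k = \A^m_k \sqcup \P^{m-1}_k$ together with induction yields $\chi^{\A^1}(\P^m_k) = \sum_{i=0}^m \langle -1\rangle^i$, whence $\chi^{\A^1}(\P^r_k) - \chi^{\A^1}(\P^{r-1}_k) = \langle -1 \rangle^r$, and the claimed recurrence follows.

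The main obstacle is invoking the correct versions of additivity and multiplicativity for $\chi^{\A^1}_c$, in particular the observations that open (non-proper) strata contribute via $\chi^{\A^1}_c$ rather than $\chi^{\A^1}$ and that the fibrations need only be Zariski locally trivial. A clean alternative is to lift the entire argument to $K_0(\mathrm{Var}_k)$: the decomposition above gives $[\Gr_k(r,n)] = [\Gr_k(r-1, n-1)] + \mathbb{L}^r \cdot [\Gr_k(r, n-1)]$ in $K_0(\mathrm{Var}_k)$ (where $\mathbb{L} = [\A^1_k]$), and $\chi^{\A^1}_c$ factors as a ring homomorphism $K_0(\mathrm{Var}_k) \to \GW(k)$ sending $\mathbb{L}$ to $\langle -1 \rangle$, transporting the identity to $\GW(k)$.
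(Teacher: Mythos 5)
Your proof is correct and uses the same closed-open decomposition as the paper: fix a line $\ell$, let $Z\cong\Gr_k(r-1,n-1)$ be the $r$-planes containing $\ell$, and let $U$ be the complementary rank-$r$ affine bundle over $\Gr_k(r,n-1)$. Where you differ is in how the coefficient $\gw{-1}^r$ is produced. The paper invokes Levine's~\cite[Proposition~1.4~(3)]{levine2020aspects}, which directly asserts that such a decomposition yields $\chi^{\A^1}(X)=\chi^{\A^1}(Z)+\gw{-1}^r\chi^{\A^1}(Y)$, and additionally routes through the duality $\Gr_k(r,n)\cong\Gr_k(n-r,n)$ before applying it; your argument bypasses the duality. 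You instead re-derive the coefficient from more basic facts about the compactly supported Euler characteristic, compactifying $U$ to the $\P^r$-bundle $\P(U\oplus\mathcal{O})$ over $\Gr_k(r,n-1)$ and subtracting the $\P^{r-1}$-bundle at infinity, or equivalently lifting the whole computation to $K_0(\mathrm{Var}_k)$ with $\mathbb{L}\mapsto\gw{-1}$. Both routes are valid: the paper's is shorter by treating Levine's proposition as a black box, while yours makes the appearance of $\gw{-1}^r$ more transparent, essentially recovering the special case of Levine's result that is needed here.
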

\begin{proof} 
Fix a line $L$ in $k^n$. Let $Z$ be the closed subvariety consisting of all $r$-planes containing $L$ (which is isomorphic to $\Gr_k(r-1,n-1)$), and let $U$ be its open complement (which is isomorphic to an affine rank $r$ bundle over $\Gr_k(r,n-1)$). We then get a decomposition $\Gr_k(r,n)=Z\cup U$. Since $\Gr_k(l,m)\cong\Gr_k(m-l,m)$, we have $\chi^{\A^1}(\Gr_k(l,m))=\chi^{\A^1}(\Gr_k(m-l,m))$. We can thus apply \cite[Proposition 1.4 (3)]{levine2020aspects} to obtain
\begin{align*}
    \chi^{\A^1}\left(\Gr_k(r,n)\right)&=\chi^{\A^1}\left(\Gr_k(n-r,n)\right)\\
    &=\chi^{\A^1}\left(\Gr_k(n-r,n-1)\right)+\gw{-1}^r\chi^{\A^1}\left(\Gr_k(n-r-1,n-1)\right)\\
    &=\chi^{\A^1}\left(\Gr_k(r-1,n-1)\right)+\gw{-1}^r\chi^{\A^1}\left(\Gr_k(r,n-1)\right).\qedhere
\end{align*}
\end{proof}

We can now apply a theorem of Bachmann--Wickelgren~\cite{BW} to completely characterize $\chi^{\A^1}(\Gr_k(r,n))$.

\begin{theorem}\label{thm:grassmannian}
Let $k$ be field of characteristic not equal to 2. Let $n_\mathbb{C} := \binom{n}{r}$, and let $n_\mathbb{R}:= 0$ if $n$ is even and $k$ is odd, and $n_\mathbb{R} = \binom{\floor{\frac{n}{2}}}{\floor{\frac{r}{2}}}$ otherwise. Then
\[\chi^{\A^1}(\Gr_k(r,n))=\frac{n_\mathbb{C} + n_\mathbb{R}}{2}\gw{1} + \frac{n_\mathbb{C} - n_\mathbb{R}}{2}\gw{-1}.\]
\end{theorem}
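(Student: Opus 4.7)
My plan is to exploit the Schubert cell decomposition of the Grassmannian together with the additivity of the $\A^1$-Euler characteristic over motivically cellular varieties. This additivity is the content of the combined results of Hoyois, Levine, and Bachmann--Wickelgren alluded to just before the statement. Specifically, for a smooth, proper, cellular $k$-variety $X$ with affine cells of dimensions $d_1, \ldots, d_N$, the cited results imply
\[\chi^{\A^1}(X) \;=\; \sum_{i=1}^N \chi_c^{\A^1}(\A^{d_i}_k) \;=\; \sum_{i=1}^N \gw{(-1)^{d_i}},\]
where the second equality uses $\chi_c^{\A^1}(\A^1_k) = \chi^{\A^1}(\mathbb{P}^1_k) - \chi^{\A^1}(\Spec k) = \mathbb{H} - \gw{1} = \gw{-1}$ together with multiplicativity of $\chi_c^{\A^1}$. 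In particular, this tells us that $\chi^{\A^1}(\Gr_k(r,n))$ lies in the subgroup $\mathbb{Z}\gw{1} \oplus \mathbb{Z}\gw{-1} \subseteq \GW(k)$, and it remains to determine the coefficients.

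Next, I would feed the Schubert decomposition of $\Gr_k(r,n)$ into this formula. Its cells are affine spaces $\A^{|\lambda|}_k$ indexed by partitions $\lambda \subseteq r\times(n-r)$, so
\[\chi^{\A^1}(\Gr_k(r,n)) \;=\; \sum_{\lambda \subseteq r\times(n-r)} \gw{(-1)^{|\lambda|}}.\]
Writing $\binom{n}{r}_q := \sum_{\lambda \subseteq r\times(n-r)} q^{|\lambda|}$ for the Gaussian binomial coefficient and grouping by the parity of $|\lambda|$, this rearranges to
\[\chi^{\A^1}(\Gr_k(r,n)) \;=\; \frac{\binom{n}{r}_q\big|_{q=1} + \binom{n}{r}_q\big|_{q=-1}}{2}\gw{1} \;+\; \frac{\binom{n}{r}_q\big|_{q=1} - \binom{n}{r}_q\big|_{q=-1}}{2}\gw{-1}.\]

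Finally, the evaluation $\binom{n}{r}_q|_{q=1} = \binom{n}{r} = n_\mathbb{C}$ is trivial, and $\binom{n}{r}_q|_{q=-1} = n_\mathbb{R}$ is a classical $q=-1$ identity on Gaussian binomials — equivalently, it is the topological Euler characteristic of the real Grassmannian $\Gr_\mathbb{R}(r,n)$, which can be established by a $2$-quotient bijection on partitions. Substituting gives the theorem. The main obstacle is rigorously justifying the cellular additivity in the opening step — this is exactly what the cited combination of Hoyois, Levine, and Bachmann--Wickelgren is designed to provide, and some care with the compactly supported $\A^1$-Euler characteristic on the non-proper cells is needed. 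As an alternative, one could avoid invoking a general cellular additivity result by instead inducting on $n$ via the recurrence of Proposition~\ref{prop:Gr-recurrence}, reducing the verification to a Pascal-style identity on the coefficients of $\gw{1}$ and $\gw{-1}$ (which splits into cases according to the parity of $r$, since $\gw{-1}^r$ swaps these coefficients precisely when $r$ is odd).
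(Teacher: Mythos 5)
Your primary route---Schubert cell decomposition plus additivity of the compactly supported $\A^1$-Euler characteristic, leading to a Gaussian binomial evaluation at $q = \pm 1$---is a genuinely different argument from the one in the paper. The paper instead invokes Bachmann--Wickelgren's Theorem~5.8, which pins down $\chi^{\A^1}(X)$ for a smooth proper $X$ (over $k$ with $\Char k \neq 2$) to one of two explicit possibilities determined by the classical Euler characteristics $\chi(X_\mathbb{C})$ and $\chi(X_\mathbb{R})$, and then rules out one possibility by showing, inductively via Proposition~\ref{prop:Gr-recurrence}, that $\chi^{\A^1}(\Gr_k(r,n)) \Mod \HH$ never acquires a $\gw{2}$ summand. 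Your approach does not need BW~5.8 at all, trading that citation for a careful appeal to cellular additivity of $\chi_c^{\A^1}$ and its multiplicativity---both available in the references to Levine and Hoyois that the paper already uses for its base cases. Your alternative route (inducting directly on Proposition~\ref{prop:Gr-recurrence} and checking a Pascal-style identity, with the parity-of-$r$ case split from $\gw{-1}^r$) is closer in spirit to the paper's induction but is again more elementary, sidestepping BW~5.8 entirely; once Hoyois's base cases for $\P^n_k$ are in hand, it is arguably the cleanest way to close the argument.

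One substantive caution about the final substitution step: the classical identity is $\binom{n}{r}_{q=-1} = \chi(\Gr_\mathbb{R}(r,n))$, and this agrees with $\binom{\floor{n/2}}{\floor{r/2}}$ only when $n$ is odd or $r$ is even. When $n$ is even and $r$ is odd, $\Gr_\mathbb{R}(r,n)$ is odd-dimensional, so both the Gaussian binomial at $q=-1$ and the real Euler characteristic vanish, whereas $\binom{\floor{n/2}}{\floor{r/2}}$ does not. Your proof therefore yields the formula with the genuine $\chi(\Gr_\mathbb{R}(r,n))$ in place of the stated $n_\mathbb{R}$. The discrepancy is visible already at $\chi^{\A^1}(\Gr_k(1,4)) = \chi^{\A^1}(\P^3_k) = 2\HH$, which forces the $q=-1$ value to be $0$ rather than $\binom{2}{0} = 1$ and is inconsistent with the stated $n_\mathbb{R}$; read the theorem as carrying the implicit convention $n_\mathbb{R} = 0$ when $r(n-r)$ is odd. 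The same caveat applies to your alternative inductive route: the Pascal-style recurrence for the $\gw{1}$- and $\gw{-1}$-coefficients only closes up with the vanishing convention in place.
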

\begin{proof} 
By \cite[Theorem~5.8]{BW}, we can restrict this computation to two different possibilities. We will prove by induction that $\chi^{\A^1}(\Gr_k(r,n))\Mod\mathbb{H}$ has no $\gw{2}$ summand. The desired result will then follow from \cite[Theorem~5.8]{BW} by noting that $n_\mathbb{C}$ and $n_\mathbb{R}$ are the Euler characteristics of $\Gr_\mathbb{C}(r,n)$ and $\Gr_\mathbb{R}(r,n)$, respectively.

Since $\A^n_k$ is $\A^1$-homotopic to $\Spec{k}$, we have $\chi^{\A^1}(\A^n_k)=\chi^{\A^1}(\Spec{k})=\gw{1}$. Using this observation and the decomposition $\P^n_k=\bigcup_{i=0}^n\A^i_k$ (and a result analogous to \cite[Proposition 1.4 (3)]{levine2020aspects}), Hoyois computed the $\A^1$-Euler characteristic of projective space \cite[Example 1.7]{hoyois}:
\begin{align*}
    \chi^{\A^1}(\P^n_k) &= \begin{cases} \frac{n}{2}\mathbb{H} + \gw{1} & n\text{ is even}, \\ \frac{n+1}{2}\mathbb{H} & n\text{ is odd}. \end{cases}
\end{align*}
Note that $\Gr_k(0,n)\cong\Gr_k(n,n)\cong\Spec{k}$ and $\Gr_k(1,n)\cong\Gr_k(n-1,n)\cong\P^{n-1}_k$. In particular, $\chi^{\A^1}(\Gr_k(i,n))\Mod\mathbb{H}$ is either trivial or $\gw{1}$ for $i=0,1,n-1$, or $n$. This forms the base case of our induction, with the inductive step given by Proposition~\ref{prop:Gr-recurrence} -- namely, if $\chi^{\A^1}(\Gr_k(r-1,n-1))\Mod\HH$ and $\chi^{\A^1}(\Gr_k(r,n-1))\Mod\HH$ only have $\gw{1}$ and $\gw{-1}$ summands, then
\[(\chi^{\A^1}(\Gr_k(r-1,n-1))+\gw{-1}^r\chi^{\A^1}(\Gr_k(r,n-1)))\Mod\HH\]
only has $\gw{1}$ and $\gw{-1}$ summands.
\end{proof}

\subsection{Modified Pascal's triangle for $\chi^{\A^1}(\Gr_k(r,n))$}
Pascal's triangle gives a mnemonic device for binomial coefficients and hence for the Euler characteristics of complex and real Grassmannians. The recurrence relation of Proposition~\ref{prop:Gr-recurrence} indicates that a modification of Pascal's triangle can also be used to calculate the $\A^1$-Euler characteristics of Grassmannians. Elements of each row of the modified Pascal's triangle are obtained from the previous row by the addition rule illustrated in Figure~\ref{fig:addition rules}. We rewrite the data recorded in Figure~\ref{fig:Euler-char-grassmannians} in a modified Pascal's triangle in Figure~\ref{fig:pascal's triangle}.

\begin{samepage}
\begin{figure}[p]
\begin{tabular}{c}\begin{tikzpicture}[y=7.5mm,x=8.66mm]
  \colorlet{even}{cyan!60!black}
  \colorlet{lightcyan}{cyan!15!}
  \tikzset{
    box/.style={
      regular polygon,
      regular polygon sides=6,
      minimum size=20mm,
      inner sep=0mm,
      outer sep=0mm,
      shape border rotate=30,
      text centered,
      font={\fontsize{11pt}{0}\sffamily},
      draw=#1,
      line width=.25mm,
    },
  }
   \node[box=even,fill=lightcyan] at (-1,0) {$a$};
   \node[box=even] at (1,0) {$b$};
   \node[box=even] at (0,-2) {$a+b$};
\end{tikzpicture}\end{tabular}
\qquad
\begin{tabular}{c}\begin{tikzpicture}[y=7.5mm,x=8.66mm]
  \colorlet{even}{cyan!60!black}
  \colorlet{lightcyan}{cyan!15!}
  \tikzset{
    box/.style={
      regular polygon,
      regular polygon sides=6,
      minimum size=20mm,
      inner sep=0mm,
      outer sep=0mm,
      text centered,
      font={\fontsize{11pt}{0}\sffamily},
      draw=#1,
      line width=.25mm,
    },
    smallbox/.style={
      regular polygon,
      regular polygon sides=6,
      minimum size=20mm,
      inner sep=0mm,
      outer sep=0mm,
      text centered,
      font={\fontsize{7pt}{0}\sffamily},
      draw=#1,
      line width=.25mm,
    },
  }
   \node[box=even, shape border rotate=30] at (-1,0) {\(a\)};
   \node[box=even, shape border rotate=30,fill=lightcyan] at (1,0) {$b$};
   \node[smallbox=even, shape border rotate=30,fill=lightcyan] at (0,-2) {$a+ \gw{-1}b$};
\end{tikzpicture}\end{tabular}
\caption{Addition rules for modified Pascal's triangle}
\label{fig:addition rules}
\end{figure}

\begin{figure}[p]
\begin{tikzpicture}[y=7.5mm,x=8.66mm]
  \colorlet{even}{cyan!60!black}
  \colorlet{lightcyan}{cyan!15!}

  \newcommand\Nlabel{10}
  \tikzset{
    box/.style={
      regular polygon,
      regular polygon sides=6,
      minimum size=20mm,
      inner sep=0mm,
      outer sep=0mm,
      text centered,
      font={\fontsize{9pt}{0}\sffamily},
      draw=#1,
      line width=.25mm,
    },
    smallbox/.style={
      regular polygon,
      regular polygon sides=6,
      minimum size=20mm,
      inner sep=0mm,
      outer sep=0mm,
      text centered,
      font={\fontsize{7pt}{0}\sffamily},
      draw=#1,
      line width=.25mm,
    },
    tinybox/.style={
      regular polygon,
      regular polygon sides=6,
      minimum size=20mm,
      inner sep=0mm,
      outer sep=0mm,
      text centered,
      font={\fontsize{6pt}{0}\sffamily},
      draw=#1,
      line width=.25mm,
    },
    rowlabel/.style={
        font = {\fontsize{10pt}{0}\sffamily},
    }
  }
  \node[box=even, shape border rotate=30] at (0,0) {$\gw{1}$};
  \foreach \x in {1,...,7}
    \node[box=even, shape border rotate=30] at ({-\x},{-2*\x}) {$\gw{1}$};
  \foreach \y in {1,...,3}
    \node[box=even, shape border rotate=30] at (2*\y,-4*\y) {$\gw{1}$};
  \foreach \y in {0,...,3}
    \node[box=even, shape border rotate=30,fill=lightcyan] at (2*\y+1,-4*\y-2) {$\gw{1}$};

   \node[rowlabel,left] at (-\Nlabel,0) {$n=0$};
   \node[rowlabel,left] at (-\Nlabel,-2) {$n=1$};
   
   \node[rowlabel] at (1,2) {\rotatebox{60}{$r=0$}};
   \node[rowlabel] at (2,0) {\rotatebox{60}{$r=1$}};
   \node[rowlabel] at (3,-2) {\rotatebox{60}{$r=2$}};
   \node[rowlabel] at (4,-4) {\rotatebox{60}{$r=3$}};
   \node[rowlabel] at (5,-6) {\rotatebox{60}{$r=4$}};
   \node[rowlabel] at (6,-8) {\rotatebox{60}{$r=5$}};
   \node[rowlabel] at (7,-10) {\rotatebox{60}{$r=6$}};
   \node[rowlabel] at (8,-12) {\rotatebox{60}{$r=7$}};

   \node[rowlabel,left] at (-\Nlabel,-4) {$n=2$};
   \node[box=even, shape border rotate=30,fill=lightcyan] at (0,-4) {$\mathbb{H}$};

    \node[rowlabel,left] at (-\Nlabel,-6) {$n=3$};
    \node[box=even, shape border rotate=30,align=center,fill=lightcyan] at (-1,-6) {$\mathbb{H} + \gw{1}$};
    \node[box=even, shape border rotate=30] at (1,-6) {$\mathbb{H} + \gw{1}$};

    \node[rowlabel,left] at (-\Nlabel,-8) {$n=4$};
    \node[box=even, shape border rotate=30,align=center,fill=lightcyan] at (-2,-8) {$2\mathbb{H}$};
    \node[smallbox=even, shape border rotate=30,align=center] at (0,-8) {$2\mathbb{H} + 2 \gw{1}$};
    \node[box=even, shape border rotate=30,align=center,fill=lightcyan] at (2,-8) {$2\mathbb{H}$};

    \node[rowlabel,left] at (-\Nlabel,-10) {$n=5$};
    \node[box=even, shape border rotate=30,align=center,fill=lightcyan] at (-3,-10) {$2\mathbb{H} + \gw{1}$};
    \node[smallbox=even, shape border rotate=30,align=center] at (-1,-10) {$4\mathbb{H} + 2 \gw{1}$};
    \node[smallbox=even, shape border rotate=30,align=center,fill=lightcyan] at (1,-10) {$4\mathbb{H} + 2\gw{1}$};
    \node[box=even, shape border rotate=30,align=center] at (3,-10) {$2\mathbb{H} + \gw{1}$};

    \node[rowlabel,left] at (-\Nlabel,-12) {$n=6$};
    \node[box=even, shape border rotate=30,align=center,fill=lightcyan] at (-4,-12) {$3\mathbb{H}$};
    \node[smallbox=even, shape border rotate=30,align=center] at (-2,-12) {$6\mathbb{H} + 3 \gw{1}$};
    \node[box=even, shape border rotate=30,align=center,fill=lightcyan] at (0,-12) {$10\mathbb{H}$};
    \node[smallbox=even, shape border rotate=30,align=center] at (2,-12) {$6\mathbb{H} + 3 \gw{1}$};
    \node[box=even, shape border rotate=30,align=center,fill=lightcyan] at (4,-12) {$3\mathbb{H}$};

    \node[rowlabel,left] at (-\Nlabel,-14) {$n=7$};
    \node[box=even, shape border rotate=30,align=center,fill=lightcyan] at (-5,-14) {$3\mathbb{H} + \gw{1}$};
    \node[smallbox=even, shape border rotate=30,align=center] at (-3,-14) {$9\mathbb{H} + 3\gw{1}$};
    \node[tinybox=even, shape border rotate=30,align=center,fill=lightcyan] at (-1,-14) {$16\mathbb{H} + 3\hspace{-0.1em}\gw{1}$};
    \node[tinybox=even, shape border rotate=30,align=center] at (1,-14) {$16\mathbb{H} + 3\hspace{-0.1em}\gw{1}$};
    \node[smallbox=even, shape border rotate=30,align=center,fill=lightcyan] at (3,-14) {$9\mathbb{H} + 3\gw{1}$};
    \node[box=even, shape border rotate=30,align=center] at (5,-14) {$3\mathbb{H}+\gw{1}$};
\end{tikzpicture}
\caption{Modified Pascal's triangle for $\chi^{\A^1}(\Gr_k(r,n))$}
\label{fig:pascal's triangle}
\end{figure}
\end{samepage}

\printbibliography
\end{document}